\newcommand{\Real}{\mathbb{R}}                              
\newcommand{\set}[1]{\left\{#1\right\}}                     
\newcommand{\abs}[1]{\left|#1\right|}                       
\newcommand{\bra}[1]{\left(#1\right)}                       
\newcommand{\white}[1]{{\color{white} #1}}                  
\newcommand {\beq}{\begin{equation}}
\newcommand {\eeq}{\end{equation}}
\newcommand {\beqn}{\begin{equation*}}
\newcommand {\eeqn}{\end{equation*}}
\newcommand {\bear}{\begin{eqnarray}}
\newcommand {\eear}{\end{eqnarray}}
\newcommand {\bearn}{\begin{eqnarray*}}
\newcommand {\eearn}{\end{eqnarray*}}
\newcommand{\norm}[1]{\left\lVert#1\right\rVert}
\newcommand{\red}[1]{{#1}}
\newcommand{\redd}[1]{{#1}}
\DeclareMathOperator{\conv}{conv}
\DeclareMathOperator{\cone}{cone}
\DeclareMathOperator{\epi}{epi}
\DeclareMathOperator{\proj}{Proj}
\newcommand{\Int}{\mathbb{Z}}
\newcommand {\bcom}{}
\DeclareMathOperator{\nlp}{\mathsf{CP}}
\DeclareMathOperator{\LB}{LB}
\DeclareMathOperator{\NB}{UB}
\DeclareMathOperator{\micp}{\mathsf{MICQP}}
\DeclareMathOperator{\lp}{\mathsf{LP}}
\DeclareMathOperator{\milp}{\mathsf{MILP}}
\DeclareMathOperator{\opt}{obj}
\DeclareMathOperator{\proc}{\mathsf{REFINE}}
\DeclareMathOperator{\bproc}{\mathsf{Branch.REFINE}}
\DeclareMathOperator{\cproc}{\mathsf{Cut.REFINE}}
\begin{document}

\title{Extended Formulations in Mixed Integer Conic Quadratic Programming
}

\titlerunning{Extended Formulations in Mixed Integer Conic Quadratic Programming}        

\author{
        Juan Pablo Vielma, Iain Dunning, Joey Huchette and Miles Lubin
}

\authorrunning{Vielma et al.} 

\institute{
	J. P. Vielma \at
              Sloan School of Management, Massachusetts Institute of Technology, Cambridge, MA 02139\\
		\email{jvielma@mit.edu}	\\
    I. Dunning \at
              Operations Research Center, Massachusetts Institute of Technology, Cambridge, MA 02139\\
    \email{idunning@mit.edu} \\
    J. Huchette \at
              Operations Research Center, Massachusetts Institute of Technology, Cambridge, MA 02139\\
    \email{huchette@mit.edu} \\
    M. Lubin \at
              Operations Research Center, Massachusetts Institute of Technology, Cambridge, MA 02139\\
    \email{mlubin@mit.edu} 
}	

\date{Jan 6th, 2015. Revised Mar 4th, 2015. Revised Feb 15th, 2016. Revised Jul 10th, 2016.}

\maketitle

\begin{abstract}
\red{In this paper we consider the use of  extended formulations in LP-based algorithms for mixed integer conic quadratic programming (MICQP).
Extended formulations have been used by Vielma, Ahmed and Nemhauser (2008) and Hijazi, Bonami and Ouorou (2013) to construct algorithms for MICQP that can provide a significant computational advantage. The first approach is based on an extended or lifted polyhedral relaxation of the Lorentz cone by Ben-Tal and Nemirovski (2001) that is extremely economical, but whose approximation quality cannot be iteratively improved. The second is based on a lifted polyhedral relaxation of the euclidean ball that can be constructed using techniques introduced by Tawarmalani and Sahinidis (2005). This relaxation is less economical, but its approximation quality can be iteratively improved. Unfortunately, while the approach of Vielma, Ahmed and Nemhauser is applicable for general MICQP problems, the approach of Hijazi, Bonami and Ouorou can only be used for MICQP problems with convex quadratic constraints. In this paper we show how a homogenization procedure can be combined with the technique by Tawarmalani and Sahinidis to adapt the extended formulation used  by Hijazi, Bonami and Ouorou to a class of conic mixed integer programming problems that include general MICQP problems.  We then compare the effectiveness of this new extended formulation against traditional and extended formulation-based algorithms for MICQP. We find that this new  formulation can be used to improve various LP-based algorithms. In particular, the formulation provides an easy-to-implement procedure that, in our benchmarks, significantly improved the performance of commercial MICQP solvers.}
\end{abstract}


\section{Introduction}\label{intro}

Most state of the art solvers for (convex) mixed integer nonlinear programming (MINLP) can be classified as nonlinear programming (NLP) based branch-and-bound algorithms or linear programming (LP) based branch-and-bound algorithms. NLP-based algorithms (e.g. \cite{BorchersMitchell1994,GuptaRavindran1985,Leyffer2001,StubbsMehrotra1995}) are the natural extension of LP-based branch-and-bound algorithms for  mixed integer linear programming (MILP) and solve the NLP relaxation of the MINLP in each node of the branch-and-bound tree. In contrast,  LP-based algorithms (e.g. \cite{abhishek2010filmint,bonami2008algorithmic,DuranGrossmann1986,FletcherLeyffer1994,Geoffrion1972,QuesadaGrossmann1992,WesterlundPettersson1995,Westerlund94}) try to avoid solving the NLP relaxation as much as possible. To achieve this they use a polyhedral relaxation  of the nonlinear constraints to construct an LP that is solved in the nodes of the branch-and-bound tree. The relaxation is iteratively refined through the branch-and-bound procedure, which is combined with sporadic solutions to the NLP relaxation. 

Traditional LP-based algorithms construct the polyhedral relaxations in the original variable space used to describe the nonlinear constraints. However, an emerging trend  is to use auxiliary variables to construct extended or \emph{lifted} polyhedral relaxations of the nonlinear constraints. This approach exploits the fact that the projection of a polyhedron can have significantly more inequalities than the original polyhedron. Hence, it is sometimes possible to construct small lifted polyhedral relaxations with the same approximation quality as significantly larger relaxations in the original space. \red{Two algorithms that use lifted polyhedral relaxations to solve quadratic MINLP problems are the \emph{lifted LP branch-and-bound}
 (LiftedLP) algorithm developed in \cite{A-Lifted-Linear-Programming} and the variant of BONMIN introduced in \cite{hijazi2013outer}. The two main differences between these approaches are the class of lifted polyhedral approximation used and the range of problems to which they are applicable. The LiftedLP algorithm from \cite{A-Lifted-Linear-Programming} uses a lifted polyhedral relaxation of the Lorentz (or second order) cone introduced by Ben-Tal and Nemirovski \cite{BenTalNemirovski2001} and hence is applicable to any mixed integer conic quadratic programming (MICQP) problem. This relaxation can be tailored  to any approximation quality, but once this approximation is fixed the relaxation cannot be easily refined. More specifically, the only known way to improve the approximation quality is to reconstruct the relaxation from scratch. In this paper we  refer to approximations with this property as \emph{static lifted polyhedral relaxations}. The inability to easily refine a static relaxation is clearly undesirable. However, the number of constraints and additional variables in the approximation by Ben-Tal and Nemirovski grows so slowly that the LiftedLP algorithm is able to use a fixed relaxation throughout its execution. While the approximation quality has to be selected a priori, this choice is often a minor calibration exercise and the resulting algorithm can significantly outperform traditional LP and NLP-based algorithms. Still, if the approximation quality is chosen poorly, the only way to adjust it is to reconstruct the relaxation and restart the algorithm. One potential solution to this issue is the variant of BONMIN from \cite{hijazi2013outer}. This algorithm uses a lifted polyhedral relaxation of the euclidean ball that can be constructed using techniques introduced by Tawarmalani and Sahinidis \cite{tawarmalani2005polyhedral}. This approximation is not as efficient as the one by Ben-Tal and Nemirovski, but it is straightforward to iteratively refine it to improve the approximation accuracy. In this paper we refer to approximations with this property as \emph{dynamic lifted polyhedral relaxations}. Using this approximation can also significantly improve the performance of traditional LP-based algorithms. However, the approach is only applicable for MINLP problems with convex quadratic constraints and hence cannot be used for all problems for which the LiftedLP algorithm is applicable.   }

\red{In this paper we study the effectiveness of dynamic lifted polyhedral relaxations for the solution of general MICQP problems. In particular, we introduce three dynamic lifted polyhedral relaxations of the Lorentz cone. The first relaxation is a straightforward adaptation of a portion of the original static relaxation introduced by Ben-Tal and Nemirovski \cite{BenTalNemirovski2001}. The second relaxation we construct by combining a homogenization procedure with the relaxation technique by Tawarmalani and Sahinidis \cite{tawarmalani2005polyhedral}. This yields a relaxation for various conic sets including the Lorentz cone. The final relaxation is a simple combination of the previous two. All three approximations can be used to improve the LiftedLP algorithm from \cite{A-Lifted-Linear-Programming}. However, we show that they can also be used directly by interpreting them as lifted conic quadratic formulations of the Lorentz cone, which lead to extended MICQP re-formulations of any MICQP problem.  We computationally evaluate the effectiveness of all three formulations when used by themselves and as an improvement of the  LiftedLP algorithm. Our computational results show that the extended MICQP re-formulation version  of the homogenized variant of the Tawarmalani and Sahinidis relaxation can provide a significant computational advantage over traditional LP and NLP algorithms and other lifted polyhedral relaxation approaches.}

The remainder of this paper is organized as follows. Section~\ref{previouswork} provides a brief introduction to LP-based algorithms for MINLP, a detailed description of existing lifted polyhedral relaxations and a brief description of their use in LP-based algorithms. This section also compares and contrasts the advantages of extremely economical \red{static} lifted polyhedral relaxations that cannot be iteratively refined, to less economical \red{dynamic} relaxations that can be refined. In particular, the section introduces the relaxation from \cite{hijazi2013outer,tawarmalani2005polyhedral} as an extremely effective compromise between these objectives, that nonetheless has some modeling limitations. Then, Section~\ref{dynamicsec} shows how the modeling limitations of the relaxation from \cite{hijazi2013outer,tawarmalani2005polyhedral} can be eliminated through a homogenization procedure that adapts it to all MICQP problems. \red{In this section we also discuss the relation of the homogenization procedure with existing perspective reformulation techniques.} The new lifted polyhedral relaxation leads to some variants of the LiftedLP algorithm from \cite{A-Lifted-Linear-Programming}, which are described in detail in Section~\ref{algosec}. \red{ This section also shows how the relaxations can be used directly by interpreting them as extended MICQP reformulations.} Section~\ref{compres} then provides a computational comparison of these algorithms and traditional LP-based and NLP-based algorithms. Finally, Section~\ref{futurework} discusses some possible improvements for specific classes of problems \red{ and some open questions concerning the best possible approximation quality for dynamic lifted polyhedral approximations}. Omitted proofs and additional figures and tables are provided in Appendices \ref{proofappendix} and \ref{resultappendix}.

To improve readability we use the convention of naming polyhedral sets in a natural original variable space with upper case letters (e.g. $P$) and non-polyhedral sets in the same space with bold upper case letters (e.g. $\mathbf{C}$). Lifted sets defined in the space of original and auxiliary variables additionally appear with a ``hat'' (e.g. $\widehat{P}$ for lifted polyhedral sets and $\widehat{\mathbf{C}}$ for lifted non-polyhedral sets). Additional definitions and notation will be introduced as they are needed through the paper.

\section{LP Based Algorithms and Lifted Polyhedral Relaxations}\label{previouswork}

We consider a MICQP of the form  
\begin{subequations}\label{micp}
\begin{alignat}{3}
  \label{probstart}\opt_{\micp}:=&\max & \quad  c x  \\
 \notag&s.t.\\
 &      & E x  &\leq h,  &\\
 &       & \norm{A^l x + b^l}_2 &\leq a^l \cdot x + b_0^l, & \quad& \forall l\in \set{1,\ldots,q},\label{conicc}\\
 \label{relaxend}& & x &\in \Real^{n},\\
\label{probend}& & x_j &\in \Int^{n},&\quad& \forall j\in I,
 \end{alignat}
\end{subequations}
 where $c\in \Real^n$, $E \in \Real^{m\times n}$, $h\in \Real^{m}$, $\norm{\cdot}_2$ is the euclidean norm, \red{$a^l\cdot x$ is the inner product between $a^l$ and $b$,} and where for each $l\in \set{1,\ldots,q}$ there exist $d_l$ such that $A^l\in \Real^{d_l\times n}$, $b^l\in\Real^{d_l}$, $a^l\in \Real^n$ and $b^l_0\in \Real$. We denote problem \eqref{micp} as $\micp$.

 We start by noting that constraint \eqref{conicc} can be written as
 \begin{align*}
    A^l x + b^l = y,\quad a^l \cdot x + b_0^l=y_0,\quad (y_0,y)&\in \mathbf{L}^{d_l},
 \end{align*}
 where $\mathbf{L}^d$  is the $(d+1)$-dimensional Lorentz cone given by
   $\mathbf{L}^d:=\set{ (y_0,y)\in \Real^{d+1}\,:\, \norm{y}_2\leq y_0}$.
Hence, to get an LP-based algorithm for \eqref{micp}  it suffices to construct a polyhedral relaxation of $\mathbf{L}^d$. 

We can construct  a polyhedral relaxation of $\mathbf{L}^d$ through its semi-infinite representation given by 
\[\mathbf{L}^d=\set{ (y_0,y)\in \Real^{d+1}\,:\, \sum_{j=1}^d \omega_j y_j\leq y_0, \forall \omega \in \mathbf{S}^{d-1}}.\]
where $\mathbf{S}^{d-1}:=\set{ \omega\in \Real^d\,:\,  \norm{\omega}_2=1}$ is the unit sphere. Then for any $\Omega\subseteq \mathbf{S}^{d-1}$ with $\abs{\Omega}<\infty$, the set 
\begin{equation}\label{simplerelaxation}
O^d\bra{\Omega}:=\set{ (y_0,y)\in \Real^{d+1}\,:\, \sum_{j=1}^d \omega_j y_j\leq y_0, \forall \omega \in \Omega}
\end{equation}
is a polyhedron such that $\mathbf{L}^d\subseteq O^d\bra{\Omega}$.   This polyhedral relaxation can easily be refined by noting that if $(y_0,y) \in O^d\bra{\Omega}\setminus \mathbf{L}^d$ then 
$\sum_{j=1}^d \omega_j(y) y_j > y_0$ where $\omega(y)=\frac{y}{\;\,\norm{y}_2}\in \mathbf{S}^{d-1}$ and hence $(y_0,y) \notin O^d\bra{\Omega\cup \set{\omega(y)}}$. Polyhedral relaxations such as $O^d$, which do not use auxiliary variables and can be iteratively refined, lead to traditional LP-based algorithms usually known as outer approximation algorithms. The simplest version of such algorithms is the MILP-based algorithm by Duran and Grossmann \cite{DuranGrossmann1986}. In the context of MICQP, this algorithm is based on the MILP relaxation of $\micp$ given by 
\begin{subequations}\label{milp}
\begin{alignat}{3}
  \opt_{\milp\bra{\set{\Omega^l}_{l=1}^k}}:=&\max & \quad cx  \\
 \notag&s.t.\\
 &      & E x  &\leq h,  &\\
 &      & A^l x+b^l &= y^l,&\quad&\forall  l\in \set{1,\ldots,q},\\
 &      & a^l \cdot x + b_0^l&=y_0^l,&\quad&\forall l\in \set{1,\ldots,q},\\
 &&     \bra{y^l_0,y^l} &\in O^{d_l}\bra{\Omega^l},&\quad&\forall l\in \set{1,\ldots,q},\\
 & & x &\in \Real^{n},\\
& & x_j &\in \Int^{n},&\quad&\forall j\in I.
 \end{alignat}
\end{subequations}
We denote this relaxation by $\milp\bra{\set{\Omega^l}_{l=1}^q}$. 

A basic version of the MILP-based outer approximation algorithm for solving $\micp$ is given in Algorithm~\ref{basicoa}, where for simplicity we have assumed the existence of an initial approximating set $\set{\tilde{\Omega}^l}_{l=1}^q$ for which $\milp\bra{\set{\tilde{\Omega}^l}_{l=1}^q}$ is bounded.

 \begin{algorithm}[htpb]
 \DontPrintSemicolon
  Set $\Omega^l=\tilde{\Omega}^l$ for all $l\in\set{1,\ldots,q}$.\;
  Solve $\milp\bra{\set{\Omega^l}_{l=1}^q}$.\;\label{solvealg}
  \eIf{$\milp\bra{\set{\Omega^l}_{l=1}^q}$ is infeasible}{Declare $\micp$ infeasible. }
  {Let $\bra{\bar{x},\bar{y},\set{\bra{\bar{y}^l_0,\bar{y}^l}}_{l=1}^q}$ be an optimal solution to $\milp\bra{\set{\Omega^l}_{l=1}^q}$.\;}
  \eIf{$\bra{\bar{y}^l_0,\bar{y}^l}\in \mathbf{L}^{d_l}$ for all $l\in \set{1,\ldots,q}$}{
  Return $\bra{\bar{x},\bar{y}}$ as the optimal solution to $\micp$.
  }
  {
  \For{$l= 1$ \KwTo $q$}{
  \If{$\bra{\bar{y}^l_0,\bar{y}^l}\notin \mathbf{L}^{d_l}$}{
    Set $\Omega^l=\Omega^l\cup \set{\omega\bra{\bar{y}^l}}$.
  }
  }} 
  Go to \ref{solvealg}\;
    \caption{A Basic LP Based Algorithm.}\label{basicoa}
\end{algorithm} 

Algorithm~\ref{basicoa} converges in finite time for pure integer problems ($p=0$) with bounded feasible regions and simple modifications can ensure convergence for the mixed integer case too (e.g. see \cite{bonami2008algorithmic,DuranGrossmann1986,FletcherLeyffer1994}).  However, the following example  from \cite{hijazi2013outer} shows that an exponential number of iterations may be needed even for very simple pure integer problems.

\begin{example}\label{ex1}
   
Let $\mathbf{F}^n:=\left\{x\in \mathbb{R}^n\,:\, \sum_{j=1}^n \left(x_j-\frac{1}{2}\right)^2 \leq \frac{n-1}{4}\right\}$ and consider the description of $\mathbf{F}^n$ as the feasible region of $\micp$ given by 
\begin{subequations}
\begin{alignat}{3}
x_j-\frac{1}{2}&=y_j,&\quad& \forall j\in \set{1,\ldots,n},\label{Ex11}\\
\sqrt{\frac{n-1}{4}}&=y_0,\label{Ex12}\\
\bra{y_0,y}&\in \mathbf{L}^n.
\end{alignat}
\end{subequations}
It is shown in \cite{hijazi2013outer} that if $P\subseteq \Real^n$ is a polyhedron such that $\mathbf{F}^n\subseteq P$ and $P\cap \mathbb{Z}^n=\emptyset$, then $P$ must have at least $2^n$ facets. In particular, $\set{\bra{x,y_0,y}\in \Real^{2n+1}\,:\, \text{\eqref{Ex11}--\eqref{Ex12}},\quad \bra{y_0,y}\in O^n(\Omega)}\neq \emptyset $ for any $\Omega\subseteq \mathbf{S}^{n-1}$ such that $\abs{\Omega}<2^n$ and hence Algorithm~\ref{basicoa} will require an exponential number of iterations to prove that $\mathbf{F}^n\cap \mathbb{Z}^n=\emptyset$.
\end{example}

We can check that $\tilde{F}^n:=\left\{x\in \mathbb{R}^n\,:\, \sum_{j=1}^n \abs{x_j-\frac{1}{2}} \leq \frac{\sqrt{n}\sqrt{n-1}}{2}\right\}$ is such that $\mathbf{F}^n\subseteq \tilde{F}^n$ and $\tilde{F}^n\cap \mathbb{Z}^n=\emptyset$. As expected $\tilde{F}^n$ has an exponential number of facets, but using standard LP techniques we can construct the lifted LP representation of  $\tilde{F}^n$ given by 
\begin{alignat*}{3}
x_j-\frac{1}{2}&\leq z_j,&\quad& \forall j\in \set{1,\ldots,n},\\
\frac{1}{2}-x_j&\leq z_j,&\quad& \forall j\in \set{1,\ldots,n},\\
\sum_{j=1}^n z_j&\leq \frac{\sqrt{n}\sqrt{n-1}}{2}.
\end{alignat*}
Then, this formulation is a lifted polyhedral relaxation of $\tilde{F}^n$ with a linear number of constraints and additional variables that can be used to prove $\mathbf{F}^n\cap \mathbb{Z}^n=\emptyset$. Hence, the exponential behavior from Example~\ref{ex1} can be avoided by using auxiliary variables.
In the following subsection we describe two lifted polyhedral relaxation approaches that can be used to prove $\mathbf{F}^n\cap \mathbb{Z}^n=\emptyset$ and are also applicable to more general MICQPs.

\subsection{Lifted Polyhedral Relaxations}

We begin with a formal definition of a lifted polyhedral relaxation of $\mathbf{L}^d$ (the definition naturally generalizes to arbitrary convex sets). 
\begin{definition}\label{approximationdef} A polyhedron $P:=\set{\bra{y_0,y,z}\in \mathbb{R}^{d+1+m_2}\,:\, A\begin{pmatrix}y_0\\y\end{pmatrix} +Dz\leq b}$ for $A \in \Real^{m_1\times \bra{d+1}}$, $D \in \Real^{m_1\times m_2}$ and $b \in \Real^{m_1}$ is a \emph{lifted polyhedral relaxation} of $\mathbf{L}^d$ if and only if
\[ \mathbf{L}^d\subseteq \proj_{\bra{y_0,y}}\bra{ P},\]
where $\proj_{\bra{y_0,y}}$ is the orthogonal projection onto the $\bra{y_0,y}$ variables.
\end{definition}
Two desirable properties of a lifted polyhedral relaxation are small number of inequalities ($m_1$) and number of auxiliary variables ($m_2$), and high approximation quality (quantified through various measures). For instance, 
the first approximation we consider additionally satisfies 
\begin{equation}\label{appprox}
  \mathbf{L}^d\subseteq \proj_{\bra{y_0,y}}\bra{ P} \subseteq \set{(y_0,y)\in \Real^{d+1}\,:\, ||y||_2\leq (1+\varepsilon) y_0},
\end{equation}
which we refer to as having approximation quality $\varepsilon$, and $m_1,m_2\leq \psi\bra{d,\ln\bra{1/\varepsilon}}$ for a low degree polynomial $\psi$. This approximation was introduced by  Ben-Tal and Nemirovski \cite{BenTalNemirovski2001} (see also \cite{glineur2000}) and its construction begins by building the lifted polyhedral relaxation of $\mathbf{L}^{2}$ given in the following proposition.
\begin{proposition}\label{2dnemi}
Let 
\[
   \widehat{N}^2_s:=\set{(y_0,y_1,y_2,v)\in\Real_+\times \Real^2\times \Real^{2s}\,:\, \begin{alignedat}{3}
  y_0&=v_{2s-1}\cos\left(\frac{\pi}{2^s}\right)+v_{2s} \sin\left(\frac{\pi}{2^s}\right),\\
  v_{1}&=y_1 \cos\left(\pi\right)+y_2\sin\left(\pi\right), \quad&&\\
  v_2&\geq \left|y_2 \cos\left(\pi\right)-y_1\sin\left(\pi\right)\right|, \quad&&\\
  v_{2\bra{i+1}-1}&=v_{2i-1} \cos\left(\frac{\pi}{2^i}\right)+v_{2i}\sin\left(\frac{\pi}{2^i}\right), &\quad& \forall i\in \set{1,\ldots,s-1},\\
  v_{2\bra{i+1}}&\geq \left|v_{2i} \cos\left(\frac{\pi}{2^i}\right)-v_{2i-1}\sin\left(\frac{\pi}{2^i}\right)\right|, &\quad& \forall i\in \set{1,\ldots,s-1}&&\end{alignedat}}.
 \]
 Then, $\widehat{N}^2_s$ is a lifted polyhedral relaxation of  $\mathbf{L}^{2}$ with approximation quality $\varepsilon=\cos\bra{\frac{\pi}{2^s}}^{-1}-1$.
\end{proposition}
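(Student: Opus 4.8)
The plan is to establish the two inclusions that together make up the claim: the \emph{relaxation} inclusion $\mathbf{L}^2\subseteq \proj_{\bra{y_0,y}}\bra{\widehat{N}^2_s}$, and the \emph{approximation-quality} inclusion $\proj_{\bra{y_0,y}}\bra{\widehat{N}^2_s}\subseteq \set{(y_0,y)\,:\,\norm{y}_2\leq (1+\varepsilon)y_0}$ with $1+\varepsilon = 1/\cos\bra{\pi/2^s}$; that $\widehat{N}^2_s$ is a polyhedron is immediate, since each absolute-value inequality is a pair of linear inequalities. Writing $\theta_i:=\pi/2^i$ (so $\theta_0=\pi$ and $\theta_s=\pi/2^s$), I would read every defining block as the same ``rotate--and--relax'' operation: the equality forcing $v_{2i+1}$ is exactly the first coordinate of the clockwise rotation of $(v_{2i-1},v_{2i})$ by $\theta_i$, while the companion inequality only requires the next even-indexed variable to dominate the absolute value of the second rotated coordinate. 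The initial block ($\theta_0$) and the final block ($\theta_s$) have the same shape, so all $s+1$ blocks are handled uniformly. Two elementary facts drive everything: the rotation identity $\bra{v_{2i-1}\cos\theta_i+v_{2i}\sin\theta_i}^2+\bra{v_{2i}\cos\theta_i-v_{2i-1}\sin\theta_i}^2=v_{2i-1}^2+v_{2i}^2$, and the nonnegativity of every even-indexed $v$.

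For the relaxation inclusion I would start from $(y_0,y)\in\mathbf{L}^2$ and construct a feasible $v$ by making every inequality tight except possibly the last: set $v_2=|y_2|$ and recursively $v_{2i+2}=|v_{2i}\cos\theta_i-v_{2i-1}\sin\theta_i|$, with the odd-indexed $v$ forced by their equalities. The rotation identity then telescopes to $v_{2s-1}^2+v_{2s}^2=y_1^2+y_2^2=\norm{y}_2^2$, so by Cauchy--Schwarz the final block evaluates to $v_{2s-1}\cos\theta_s+v_{2s}\sin\theta_s\leq \norm{y}_2\leq y_0$. Since $v_{2s}$ occurs only in the final equality (with coefficient $\sin\theta_s>0$) and in its own lower bound, I can raise it above its minimum without violating any other constraint; as $v_{2s}$ increases the final expression moves continuously from a value $\leq y_0$ up to $+\infty$, so by the intermediate value theorem some choice makes it equal $y_0$, giving a feasible $v$.

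For the approximation quality, which is the heart of the argument, I would manufacture valid inequalities of the form $y_0\geq y_1\cos\gamma+y_2\sin\gamma$ by propagating a linear functional backward through the blocks. Starting from the exact identity $y_0=v_{2s-1}\cos\beta_s+v_{2s}\sin\beta_s$ with $\beta_s=\theta_s$, I would show that whenever $y_0\geq v_{2i+1}\cos\beta_{i+1}+v_{2i+2}\sin\beta_{i+1}$ holds with $\sin\beta_{i+1}\geq 0$, substituting the block relations and using $v_{2i+2}\geq|\cdot|$ with a choice of sign yields $y_0\geq v_{2i-1}\cos\beta_i+v_{2i}\sin\beta_i$ for $\beta_i\in\set{\beta_{i+1}+\theta_i,\ \theta_i-\beta_{i+1}}$, each step only decreasing the functional. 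After the initial block this gives $y_0\geq y_1\cos\beta_0+y_2\sin\beta_0$. The crucial computation is that, with the signs chosen adaptively, the attainable angles $\beta_0$ are \emph{exactly} the $2^s$ odd multiples $(2k+1)\pi/2^s$, which are equally spaced with gap $2\theta_s$ and hence form a $\theta_s$-net of the circle. Given this, for any $(y_0,y)$ in the projection I would pick $\beta_0$ to be the odd multiple of $\theta_s$ nearest the polar angle of $(y_1,y_2)$, so that they differ by at most $\theta_s$; the corresponding inequality then reads $y_0\geq \norm{y}_2\cos\bra{\text{their angular gap}}\geq \norm{y}_2\cos\theta_s$, which rearranges to $\norm{y}_2\leq (1+\varepsilon)y_0$.

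The main obstacle is precisely this last net statement: I would verify by induction on $s$ that the attainable $\beta_0$ are all $2^s$ odd multiples of $\theta_s$, carefully tracking the side conditions $\sin\beta_{i+1}\geq 0$ that license each backward step (the cases $s=1,2$ already display the expected $2^s$-point net). Establishing that the certificate directions cover the circle with spacing exactly $2\theta_s$ — rather than producing merely a handful of supporting inequalities — is what pins down the worst-case direction at angular distance $\theta_s$ and therefore the sharp constant $\varepsilon=\cos\bra{\pi/2^s}^{-1}-1$.
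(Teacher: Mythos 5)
Your argument is correct; note that the paper itself gives no proof of this proposition, deferring to Ben-Tal and Nemirovski \cite{BenTalNemirovski2001} (and Glineur), and your two inclusions (the telescoping rotation identity plus the intermediate-value adjustment of $v_{2s}$ for containment of $\mathbf{L}^2$, and the backward propagation of supporting functionals for the quality bound) are precisely the standard argument for that construction. The one step you leave as ``would verify'' does close cleanly: inducting on the block index, the attainable angles $\beta_i$ are exactly the odd multiples of $\pi/2^s$ in the interval $\bra{0,\pi/2^{i-1}}$ (adding the even multiple $\theta_i=\pi/2^i$ maps them onto the odd multiples in $\bra{\theta_i,2\theta_i}$ and reflecting $\beta\mapsto\theta_i-\beta$ preserves the odd multiples in $\bra{0,\theta_i}$), so every intermediate angle lies in $(0,\pi)$ and the side condition $\sin\beta_{i+1}\geq 0$ is automatic, with the final block $\theta_0=\pi$ delivering all $2^s$ odd multiples of $\pi/2^s$ on the circle and hence the sharp constant $\cos\bra{\pi/2^s}^{-1}-1$.
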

To construct a lifted polyhedral relaxation of $\mathbf{L}^{d}$ for $d>2$, Ben-Tal and Nemirovski observe that
 \begin{align*}
   \mathbf{L}^d=\set{ (y_0,y)\in \Real^{d+1}\,:\, \exists t\in \Real^{\lfloor d/2\rfloor}\text{ s.t. } \begin{alignedat}{3}\sum_{k=1}^{\lceil d/2\rceil} t_k^2  &\leq y_0^2,\\  y_{2k-1}^2+y_{2k}^2 &\leq t_k^2,  &\quad& \forall  k\in \set{1,\ldots, \lfloor d/2\rfloor},\\
                                                                                                                                                                          t_{\lceil d/2\rceil}&=y_d, &\quad& \text{if $d$ is odd}\end{alignedat} }.
\end{align*}
Recursively repeating this construction for the $\lceil d/2\rceil+1$ dimensional Lorentz cone inside this representation we can construct a version with additional variables that only uses $d-1$ three dimensional Lorentz cones. This construction was denoted the \emph{tower of variables} by Ben-Tal and Nemirovski and  yields the lifted representation $\mathbf{L}^{d}=\proj_{\bra{y_0,y}}\bra{ \widehat{\mathbf{T}}^{d}}$  for
 \begin{equation}\label{nolineartower}
   \widehat{\mathbf{T}}^d=\set{ \bra{y_0,y,t}\in \Real^{d+1+R(d)}\,:\,\begin{alignedat}{5}
   y_0&=t^{K}_1,&&&\\
   t^0_i&=y_i, &&\forall i&&\in \{1,\ldots, d\},\\
   (t^{k+1}_i,t^k_{2i-1},t^k_{2i})&\in \mathbf{L}^2, &\quad&\forall i&&\in\{1,\ldots,\lfloor r_k/2 \rfloor\},\\
                &&& \;\, k&&\in\{0,\ldots,K-1\},\\
   t^k_{r_k}&=t^{k+1}_{\lceil r_k/2 \rceil},          &&\forall k&&\in\{0,\ldots,K-1\} \\
                  &    &&&&\text{s.t. $r_k$ is odd} \end{alignedat} }
 \end{equation}
 where  $K=\lceil \log_2(d)\rceil$, $\{r_k\}_{k=0}^K$ is defined by the recursion $r_0=d$, ${r_{k+1}=\lceil r_k /2 \rceil}$ for $k\in \{0,\ldots,K-1\}$ and $R(d)=\sum_{k=0}^K r_k$. A lifted polyhedral relaxation of $\mathbf{L}^d$ can then be constructed by replacing every occurrence of $\mathbf{L}^2$ in $\widehat{\mathbf{T}}^d$ with an appropriate polyhedral relaxation. The original approximation of Ben-Tal and Nemirovski uses $\widehat{N}^2_s$ as a lifted polyhedral relaxation of $\mathbf{L}^2$ and leads to essentially the  smallest possible approximation of  $\mathbf{L}^d$ (see Section 3 of \cite{BenTalNemirovski2001}). However, we will see that using alternative approximations could lead to a computational advantage in our context, so the following proposition first presents a generic version of the approximation and then specializes it to the original Ben-Tal and Nemirovski approximation.
 \begin{proposition}\label{nemiprop}
 Let   $K=\lceil \log_2(d)\rceil$, $\{r_k\}_{k=0}^K$ be defined by the recursion $r_0=d$, ${r_{k+1}=\lceil r_k /2 \rceil}$ for $k\in \{0,\ldots,K-1\}$ and $R(d)=\sum_{k=0}^K r_k$. Let $D:=\set{d_{i,k}\,:\, i\in\{1,\ldots,\lfloor r_k/2 \rfloor\},\,k\in \set{0,\ldots,K-1}}\subseteq \mathbb{Z}_+$, $G(D):=\sum_{k=0}^{K-1}\sum_{i=1}^{\lfloor r_k/2 \rfloor} d_{i,k}$ and  
$\mathcal{P}:=\set{P_{i,k}\,:\, i\in\{1,\ldots,\lfloor r_k/2 \rfloor\},\,k\in \set{0,\ldots,K-1} }$ be a family of polyhedra such that, for each $i$ and $k$, $P_{i,k}\subseteq \Real^{3+d_{i,k}}$ is a (lifted) polyhedral relaxation of $\mathbf{L}^2$ with approximation quality $\varepsilon_k\in(0,1]$.  Then 
 \begin{equation*}
   \widehat{T}^d\bra{\mathcal{P},D}:=\set{ (y_0,y,t,v)\in \times\Real^{d+1+R(d)+G(D)}\,:\,\begin{alignedat}{5}
   y_0&=t^{K}_1,&&&\\
   t^0_i&=y_i, &&\forall i&&\in \{1,\ldots, d\},\\
   (t^{k+1}_i,t^k_{2i-1},t^k_{2i},v^{i,k})&\in P_{i,k}, &\quad&\forall i&&\in\{1,\ldots,\lfloor r_k/2 \rfloor\},\\
                &&& \;\, k&&\in\{0,\ldots,K-1\},\\
   t^k_{t_k}&=t^{k+1}_{\lceil r_k/2 \rceil},          &&\forall k&&\in\{0,\ldots,K-1\} \\
                  &    &&&&\text{s.t. $r_k$ is odd} \end{alignedat} }.
 \end{equation*}
is a lifted polyhedral relaxation of $\mathbf{L}^d$ with approximation quality $\varepsilon=\bra{\prod_{k=0}^{K-1} 1+\varepsilon_k} -1$. In particular, let  
$\widehat{L}^d_s:=\widehat{T}^d\bra{\mathcal{N}_s,D_{2s}}$ where $\mathcal{N}_s:=\set{\widehat{N}^2_s\,:\, i\in\{1,\ldots,\lfloor r_k/2 \rfloor\},\,k\in \set{0,\ldots,K-1}}$\footnote{Or more precisely, $\mathcal{N}:=\set{N_{i,k}\,:\, i\in\{1,\ldots,\lfloor r_k/2 \rfloor\},\,k\in \set{0,\ldots,K-1}}$ where $N_{i,k}=\widehat{N}^2_s$ for each $i,k$}
and  \[D_{2s}:=\set{2s\,:\, i\in\{1,\ldots,\lfloor r_k/2 \rfloor\},\,k\in \set{0,\ldots,K-1}}.\]
Then, for any $\varepsilon\in(0,1)$ there exists an $s\bra{\varepsilon}\in \Int_+$ such that $\widehat{L}^d_{s\bra{\varepsilon}}$ has $O\bra{d\ln\bra{K/\varepsilon}}$ variables and constraints, and is a lifted polyhedral relaxation of $\mathbf{L}^d$ with approximation quality $\varepsilon$.
 \end{proposition}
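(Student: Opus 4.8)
The plan is to prove the general claim about $\widehat{T}^d\bra{\mathcal{P},D}$ first and then obtain the statement about $\widehat{L}^d_s$ as a specialization. The inclusion $\mathbf{L}^d\subseteq \proj_{\bra{y_0,y}}\bra{\widehat{T}^d\bra{\mathcal{P},D}}$ is the easy half: I would start from the exact lifted representation $\mathbf{L}^{d}=\proj_{\bra{y_0,y}}\bra{\widehat{\mathbf{T}}^{d}}$ in \eqref{nolineartower}. Given $\bra{y_0,y}\in\mathbf{L}^d$, choose $t$ with $\bra{y_0,y,t}\in\widehat{\mathbf{T}}^d$, so that every triple $\bra{t^{k+1}_i,t^k_{2i-1},t^k_{2i}}$ belongs to $\mathbf{L}^2$. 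Because each $P_{i,k}$ is a relaxation, $\mathbf{L}^2\subseteq\proj\bra{P_{i,k}}$, so each such triple extends through some $v^{i,k}$ to a point of $P_{i,k}$; assembling these extensions produces $\bra{y_0,y,t,v}\in\widehat{T}^d\bra{\mathcal{P},D}$ projecting onto $\bra{y_0,y}$.

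For the approximation quality I would track the Euclidean norm of each ``level'' of the tower of variables. Define $\sigma_k:=\norm{\bra{t^k_1,\ldots,t^k_{r_k}}}_2$, and note $\sigma_0=\norm{y}_2$ (since $t^0_i=y_i$ and $r_0=d$) and $\sigma_K=y_0$ (since $r_K=1$, $t^K_1=y_0$, and $y_0\geq 0$ is forced by the level-$(K-1)$ cone). The core estimate is the one-step recursion $\sigma_k\leq\bra{1+\varepsilon_k}\sigma_{k+1}$. Indeed, the upper bound \eqref{appprox} applied to $P_{i,k}$ gives $\bra{t^k_{2i-1}}^2+\bra{t^k_{2i}}^2\leq\bra{1+\varepsilon_k}^2\bra{t^{k+1}_i}^2$ for each pair $i$, and whenever $r_k$ is odd the pass-through identity $t^k_{r_k}=t^{k+1}_{\lceil r_k/2\rceil}$ contributes one extra squared term that can be absorbed into the same factor since $\bra{1+\varepsilon_k}^2\geq 1$; summing over $i$ and taking square roots yields the recursion. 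Chaining it from $k=0$ to $k=K-1$ gives $\norm{y}_2=\sigma_0\leq\bra{\prod_{k=0}^{K-1}\bra{1+\varepsilon_k}}\sigma_K=\bra{1+\varepsilon}y_0$, which is exactly approximation quality $\varepsilon=\bra{\prod_{k=0}^{K-1}1+\varepsilon_k}-1$.

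To specialize, set $\varepsilon_k=\varepsilon_s:=\cos\bra{\pi/2^s}^{-1}-1$ for every $k$ (Proposition~\ref{2dnemi}), so the quality collapses to $\bra{1+\varepsilon_s}^K-1$. I would then use the small-angle estimate $\cos\bra{\pi/2^s}^{-1}-1\leq\pi^2/4^s$ (valid for $s\geq 2$) together with $\ln\bra{1+\varepsilon}\geq\varepsilon/2$ on $(0,1)$: it suffices to pick $s$ with $\varepsilon_s\leq\varepsilon/\bra{2K}$, because then $K\ln\bra{1+\varepsilon_s}\leq K\varepsilon_s\leq\varepsilon/2\leq\ln\bra{1+\varepsilon}$ forces $\bra{1+\varepsilon_s}^K-1\leq\varepsilon$. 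The bound $\pi^2/4^s\leq\varepsilon/\bra{2K}$ holds as soon as $s\geq\tfrac12\log_2\bra{2\pi^2K/\varepsilon}$, so $s\bra{\varepsilon}:=\lceil\tfrac12\log_2\bra{2\pi^2K/\varepsilon}\rceil=O\bra{\ln\bra{K/\varepsilon}}$ works. For the size, the tower has $O(d)$ copies of $\mathbf{L}^2$ and $R(d)=O(d)$ tower variables, while each copy of $\widehat{N}^2_{s}$ adds $2s$ variables and $O(s)$ constraints; hence $\widehat{L}^d_{s(\varepsilon)}$ has $O\bra{d\,s\bra{\varepsilon}}=O\bra{d\ln\bra{K/\varepsilon}}$ variables and constraints.

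The hard part will be the per-level recursion $\sigma_k\leq\bra{1+\varepsilon_k}\sigma_{k+1}$: one must get the orientation of each three-dimensional cone right (so that $t^{k+1}_i$ is the norm-bounding coordinate to which \eqref{appprox} applies) and handle the odd-$r_k$ pass-through carefully; once this telescoping inequality is in hand, both the product formula and the asymptotic calibration in the specialization are routine.
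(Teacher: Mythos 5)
Your proof is correct: the paper itself gives no proof of this proposition (Appendix~\ref{proofappendix} only proves Proposition~\ref{nosepprop}, and this result is inherited from Ben-Tal and Nemirovski \cite{BenTalNemirovski2001}), and your argument --- lifting the exact tower representation $\widehat{\mathbf{T}}^d$ through the relaxations $P_{i,k}$ for the containment, then telescoping the level-norm bound $\sigma_k\leq(1+\varepsilon_k)\sigma_{k+1}$ (with the odd pass-through absorbed since $(1+\varepsilon_k)^2\geq 1$) for the approximation quality --- is exactly the standard argument behind that construction. The calibration $s(\varepsilon)=O(\ln(K/\varepsilon))$ via $\sec(\pi/2^s)-1\leq\pi^2/4^s$ and the $O(d\ln(K/\varepsilon))$ size count are likewise correct.
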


It is not hard to see that for an appropriately chosen $\varepsilon$,  $\widehat{L}^d_{s\bra{\varepsilon}}$ is a lifted polyhedral relaxation with a polynomial number of variables and constraints that can prove $\mathbf{F}^n\cap \mathbb{Z}^n=\emptyset$. However, this can also be achieved with the tower of variables construction without the use of $\widehat{N}^2_s$. To show that we will use the following straightforward corollary of  Proposition~\ref{nemiprop}, which approximates each $\mathbf{L}^2$ in $\widehat{\mathbf{T}}^d$ with non-lifted approximation $O^d\bra{\Omega}$ introduced in \eqref{simplerelaxation}.

 \begin{corollary}\label{dynamictower}
 Let $\Omega:=\set{\Omega_{i,k}\,:\, i\in\{1,\ldots,\lfloor t_k/2 \rfloor\},\quad k\in\{0,\ldots,K-1\}}$ be such that $\Omega_{i,k}\subseteq \mathbf{S}^1$ and $\abs{\Omega_{i,k}}<\infty$ for all $i,k$. For any such, $\Omega$ define  $\widehat{T}^d\bra{\Omega}:=\widehat{T}^d\bra{\mathcal{O},D_0}$ for \[\mathcal{O}:=\set{O^2\bra{\Omega_{i,k}}\,:\,  i\in\{1,\ldots,\lfloor r_k/2 \rfloor\},\ k\in \set{0,\ldots,K-1}}\] and $D_0:=\set{0\,:\,  i\in\{1,\ldots,\lfloor r_k/2 \rfloor\},\ k\in \set{0,\ldots,K-1}}$.
Then $\widehat{T}^d\bra{\Omega}$, is a lifted polyhedral relaxation of $\mathbf{L}^d$ with \[\sum_{k=0}^{K-1} \sum_{i=1}^{\lfloor t_k/2 \rfloor} \abs{\Omega_{i,k}}\leq \bra{d-1} \max_{\Omega_{i,k}\in \Omega} \abs{\Omega_{i,k}}\] constraints and $d-2$ auxiliary variables\footnote{After removing redundant variables through the equalities of the formulation.}.
 \end{corollary}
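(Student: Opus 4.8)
The plan is to obtain all three claims directly from the exact tower identity $\mathbf{L}^d = \proj_{(y_0,y)}(\widehat{\mathbf{T}}^d)$ recorded before Proposition~\ref{nemiprop}, combined with a monotonicity observation and a combinatorial count of the tower. I would first establish the relaxation property. For every finite $\Omega_{i,k} \subseteq \mathbf{S}^1$ we have $\mathbf{L}^2 = O^2(\mathbf{S}^1) \subseteq O^2(\Omega_{i,k})$, since discarding the semi-infinite constraints indexed by $\mathbf{S}^1 \setminus \Omega_{i,k}$ only enlarges the set. Because $D_0$ assigns zero auxiliary variables to each block, $\widehat{T}^d(\Omega)$ and $\widehat{\mathbf{T}}^d$ live in the same space $\Real^{1+d+R(d)}$ and are cut out by identical equalities; they differ only in that every exact constraint $(t^{k+1}_i,t^k_{2i-1},t^k_{2i}) \in \mathbf{L}^2$ is weakened to $(t^{k+1}_i,t^k_{2i-1},t^k_{2i}) \in O^2(\Omega_{i,k})$. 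Hence every point of $\widehat{\mathbf{T}}^d$ lies in $\widehat{T}^d(\Omega)$, so $\widehat{\mathbf{T}}^d \subseteq \widehat{T}^d(\Omega)$; projecting onto $(y_0,y)$ yields $\mathbf{L}^d = \proj_{(y_0,y)}(\widehat{\mathbf{T}}^d) \subseteq \proj_{(y_0,y)}(\widehat{T}^d(\Omega))$, which is precisely the containment required by Definition~\ref{approximationdef}. This is exactly the monotonicity that drives Proposition~\ref{nemiprop}; the only reason the corollary omits an approximation-quality bound is that an $O^2(\Omega_{i,k})$ built from a fixed finite $\Omega_{i,k}$ need not have quality in $(0,1]$ (it can even be unbounded in a direction uncovered by $\Omega_{i,k}$), so only the relaxation direction survives.

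For the inequality count I would simply note that each block $(i,k)$ contributes the $|\Omega_{i,k}|$ halfspaces defining $O^2(\Omega_{i,k})$, while the equalities are spent eliminating variables and are not counted. Summing gives $\sum_{k=0}^{K-1}\sum_{i=1}^{\lfloor r_k/2\rfloor}|\Omega_{i,k}|$ inequalities, and since each $\mathbf{L}^2$-block consumes two entries and produces one --- reducing the tower from $d$ entries at level $0$ to a single entry at level $K$ --- there are exactly $d-1$ blocks. The sum is therefore at most $(d-1)\max_{i,k}|\Omega_{i,k}|$.

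The delicate step, and the one I expect to be the main obstacle, is the auxiliary-variable count. I would partition the $R(d)$ tower variables into the $d$ level-$0$ variables $t^0_i$, the $d-1$ block outputs $t^{k+1}_i$ with $i \leq \lfloor r_k/2\rfloor$, and the carried variables $t^{k+1}_{\lceil r_k/2\rceil}$ appearing exactly when $r_k$ is odd. The equalities $t^0_i = y_i$ remove the first group by identifying it with the originals, and each carry-over equality $t^k_{r_k} = t^{k+1}_{\lceil r_k/2\rceil}$ removes one carried variable. The crucial point is that a carried variable is never a block output: its index $\lceil r_k/2\rceil = \lfloor r_k/2\rfloor + 1$ strictly exceeds every block-output index at that level, so eliminating carried variables never deletes a block output. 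Consequently all $d-1$ block outputs survive except the topmost one, which $y_0 = t^K_1$ identifies with the original $y_0$, leaving exactly $(d-1)-1 = d-2$ auxiliary variables. To be safe I would trace the chains of carry-over equalities to confirm they always terminate either at a surviving block output or at a level-$0$ variable (so that no interior block output is inadvertently identified with an original), and I would check the base cases $d=2$ (no auxiliary variables) and $d=3$ (one auxiliary variable, $t^1_1$) as sanity tests.
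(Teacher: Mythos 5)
Your proof is correct and follows essentially the route the paper intends: the paper presents this as a direct consequence of Proposition~\ref{nemiprop} (specialized to $P_{i,k}=O^2\bra{\Omega_{i,k}}$ and $d_{i,k}=0$), and your monotonicity argument $\mathbf{L}^2\subseteq O^2\bra{\Omega_{i,k}}$ combined with the exact identity $\mathbf{L}^d=\proj_{\bra{y_0,y}}\bra{\widehat{\mathbf{T}}^d}$ is precisely the containment content of that proposition, with your remark about the missing approximation-quality claim correctly identifying why only the relaxation direction is asserted. Your block count ($d-1$ blocks, each consuming two entries and producing one) and the careful elimination of level-$0$, carried, and top-level variables to reach $d-2$ auxiliary variables supply exactly the bookkeeping the paper leaves implicit, and both check out.
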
 

The following example shows how the tower of variables approximation based on both $\widehat{N}^2_s$ and $O^2\bra{\Omega}$ result in small lifted polyhedral approximations that prove $\mathbf{F}^n\cap \mathbb{Z}^n=\emptyset$.
\begin{example}\label{exampletower}Let $\Omega_{0,0}=\set{\bra{s_1/\sqrt{2},s_2/\sqrt{2}}}_{s\in \set{-1,1}^2}$, then we can check that \[\proj_{\bra{y_0,y}}\bra{\widehat{N}^2_s}=O^2\bra{\Omega_{0,0}}=\set{\bra{y_0,y}\in \Real_+\times \Real^2\,:\, \abs{y_1}+\abs{y_2}\leq \sqrt{2} y_0}.\]
Then, if we let $\Omega:=\set{\Omega_{0,0}\,:\, i\in\{1,\ldots,\lfloor t_k/2 \rfloor\},\quad k\in\{0,\ldots,K-1\}}$ then 
 \[\proj_{\bra{y_0,y}}\bra{\widehat{L}^d_2}=\proj_{\bra{y_0,y}}\bra{\widehat{T}^d\bra{\Omega}}.\] We can also check that 
 \[\min\set{y_0\,:\, \bra{y_0,\bar{y}}\in \proj_{\bra{y_0,y}}\bra{\widehat{L}^d_2}}=\frac{1}{2}\bra{\frac{2}{\sqrt{2}}}^{\log_2(d)}=\sqrt{\frac{d}{4}}\]
 for all $\bar{y}\in \set{0,1}^d$.
 Hence 
 \begin{equation}\label{empty}\set{\bra{x,y,y_0}\in \Real^{2n+1}\,:\, x_j-\frac{1}{2}=y_j\; \forall j\in \set{1,\ldots,n},\; \sqrt{\frac{n-1}{4}}=y_0,\; \bra{y_0,y}\in \proj_{\bra{y_0,y}}\bra{P}}\cap \mathbb{Z}^n=\emptyset\end{equation}
\end{example}
for $P=\widehat{L}^d_2$  or $P=\widehat{T}^d\bra{\Omega}$ and  both of these formulations can be used to prove $\mathbf{F}^n\cap \mathbb{Z}^n=\emptyset$. Formulation $\widehat{T}^d\bra{\Omega}$ achieves this with $4n-4$ constraints and $n-2$ auxiliary variables and $\widehat{L}^d_2$ achieves it with $4n-4$ constraints and $2n-1$ auxiliary variables. 

From Examples~\ref{ex1} and \ref{exampletower} we have that the number of constraints in the projection onto the $\bra{y_0,y}$ variables of both $\widehat{T}^d\bra{\Omega}$ and $\widehat{L}^d_s$ can have a number of facets that is exponential on their original number of variables and constraints. Hence, both formulations   can  provide a significant advantage over the traditional outer approximation formulation $O^d(\Omega)$.  However, the full potential of the approximation by Ben-Tal and Nemirovski approximation is only achieved when $\widehat{N}^2_s$ is used (e.g. as in $\widehat{L}^d_s$). For instance, if $d=2$, $\widehat{L}^d_s$ reduces to $\widehat{N}^2_s$, which has a linear (in $s$) number of variables and constraints and which has a projection onto the $\bra{y_0,y}$ variables with an exponential  number of constraints\footnote{For instance, one can check that $\set{y\in \Real^2\,:\, \proj_{\bra{1,y}}\bra{\widehat{N}^2_s}}$ is a regular $2^s$ polygon.}. In contrast, for $d=2$, $\widehat{T}^d\bra{\Omega}$  reduces to $O^d(\Omega)$ and hence we do not get any constraint multiplying effect through projections. Nonetheless,  $\widehat{T}^d\bra{\Omega}$ does have one advantage over $\widehat{L}^d_s$. While $\widehat{T}^d\bra{\Omega}$ can be iteratively refined by simply augmenting $\Omega$, the only way to refine  $\widehat{L}^d_s$ is to refine each (or some) of the approximations $\widehat{N}^2_s$ used in its construction. Regrettably, the complexity of $\widehat{N}^2_s$ does not easily lend itself to an iterative refinement and the only known way to refine $\widehat{N}^2_s$ is to replace it with $N^2_{s'}$ for $s'>s$ constructed from scratch. Fortunately, there is an alternative lifted approximation that provides a middle ground between $\widehat{T}^d\bra{\Omega}$ and $\widehat{L}^d_s$. This approximation can be constructed using the following proposition from \cite{tawarmalani2005polyhedral}.
\begin{proposition}\label{separableprop} Let $f_j:\Real \to \Real$ be strictly convex differentiable functions for each $j\in \set{1,\ldots,n}$ and let  $\Gamma:=\set{\Gamma_{j}\,:\, j\in \set{1,\ldots,n}}$ be such that $\Gamma_j\subseteq \Real$ and $\abs{\Gamma_j}<\infty$. If $\mathbf{G}:=\set{(y_0,y)\in \Real^{n+1}\,:\, \sum_{j=1}^n f_j(y_j)\leq y_0}$, then 
\begin{subequations}\label{generalsep}
\begin{alignat}{3}
f_j\bra{\gamma} + f_j'\bra{\gamma}\bra{y_j -\gamma}  &\leq w_j,&\quad& \forall \gamma\in \Gamma_j,\quad j\in \set{1,\ldots,n},\\
\sum_{j=1}^n w_j &\leq y_0
\end{alignat}
\end{subequations}
is a lifted polyhedral relaxation of $\mathbf{G}$ with $1+\sum_{j=1}^n \abs{\Gamma_j} $ constraints and whose
 projection onto the $(y_0,y)$ variables can have up to $\prod_{j=1}^n \abs{\Gamma_j}$ constraints. 
\end{proposition}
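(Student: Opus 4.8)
The plan is to verify the three claims of the statement separately: that \eqref{generalsep} defines a relaxation of $\mathbf{G}$, that it uses the stated number of constraints, and that its projection can attain $\prod_{j=1}^n \abs{\Gamma_j}$ facets. Throughout let $\widehat{G}$ denote the polyhedron in the variables $\bra{y_0,y,w}$ defined by \eqref{generalsep}, and for each $j$ and each $\gamma\in\Gamma_j$ write $g_{j,\gamma}\bra{y_j}:=f_j\bra{\gamma}+f_j'\bra{\gamma}\bra{y_j-\gamma}$ for the first-order (tangent) approximation of $f_j$ at $\gamma$. The constraint count is immediate: there is one inequality $g_{j,\gamma}\bra{y_j}\leq w_j$ for each pair $\bra{j,\gamma}$ with $\gamma\in\Gamma_j$, yielding $\sum_{j=1}^n\abs{\Gamma_j}$ inequalities, together with the single aggregating inequality $\sum_{j=1}^n w_j\leq y_0$.

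For the relaxation property I would take an arbitrary $\bra{y_0,y}\in\mathbf{G}$ and exhibit a lifting, using the generalization of Definition~\ref{approximationdef} to the convex set $\mathbf{G}$. Setting $w_j:=f_j\bra{y_j}$, the gradient inequality for the convex function $f_j$ gives $g_{j,\gamma}\bra{y_j}\leq f_j\bra{y_j}=w_j$ for every $\gamma\in\Gamma_j$, so the first block of \eqref{generalsep} holds, while $\sum_{j=1}^n w_j=\sum_{j=1}^n f_j\bra{y_j}\leq y_0$ by membership in $\mathbf{G}$. Hence $\bra{y_0,y,w}\in\widehat{G}$ and $\bra{y_0,y}\in\proj_{\bra{y_0,y}}\bra{\widehat{G}}$, which establishes $\mathbf{G}\subseteq\proj_{\bra{y_0,y}}\bra{\widehat{G}}$.

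For the projection I would eliminate the $w$ variables. Each $w_j$ is bounded below only by the tangents $g_{j,\gamma}\bra{y_j}$ and appears only in $\sum_{j=1}^n w_j\leq y_0$; setting each $w_j$ to its least feasible value $\max_{\gamma\in\Gamma_j}g_{j,\gamma}\bra{y_j}$ shows $\proj_{\bra{y_0,y}}\bra{\widehat{G}}=\set{\bra{y_0,y}\,:\,\sum_{j=1}^n\max_{\gamma\in\Gamma_j}g_{j,\gamma}\bra{y_j}\leq y_0}$. Because the maximum of a separable sum over a product set equals the sum of the separate maxima, this single constraint is equivalent to the family $\sum_{j=1}^n g_{j,\gamma_j}\bra{y_j}\leq y_0$ ranging over all tuples $\bra{\gamma_1,\ldots,\gamma_n}\in\Gamma_1\times\cdots\times\Gamma_n$. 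This exhibits the projection as an intersection of $\prod_{j=1}^n\abs{\Gamma_j}$ half-spaces, giving the claimed upper bound on the number of constraints.

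The main work is showing that these $\prod_{j=1}^n\abs{\Gamma_j}$ inequalities can all be facet-defining at once, and this is exactly where strict convexity is essential. Since $f_j$ is strictly convex, $f_j'$ is strictly increasing, so distinct $\gamma\in\Gamma_j$ produce distinct slopes $f_j'\bra{\gamma}$; consequently distinct tuples produce distinct normals $\bra{-1,f_1'\bra{\gamma_1},\ldots,f_n'\bra{\gamma_n}}$. To see non-redundancy I would argue that each tangent $g_{j,\gamma}$ is the unique maximizer of $\max_{\gamma'\in\Gamma_j}g_{j,\gamma'}$ at $y_j=\gamma$: indeed $g_{j,\gamma}\bra{\gamma}=f_j\bra{\gamma}$, while strict convexity forces $g_{j,\gamma'}\bra{\gamma}<f_j\bra{\gamma}$ for every $\gamma'\neq\gamma$. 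Hence each piecewise-linear convex function $F_j\bra{y_j}:=\max_{\gamma\in\Gamma_j}g_{j,\gamma}\bra{y_j}$ genuinely has $\abs{\Gamma_j}$ affine pieces, each active on a nondegenerate interval. Taking products of these intervals yields, for every tuple, a full-dimensional box on which $\sum_{j=1}^n F_j$ is affine with the distinct gradient above; each such box therefore contributes a distinct facet of the epigraph $\proj_{\bra{y_0,y}}\bra{\widehat{G}}$, so all $\prod_{j=1}^n\abs{\Gamma_j}$ inequalities are facets. The subtle point to watch is confirming full-dimensionality of these boxes, equivalently that no tangent piece is dominated in the maximum; this is precisely what strict convexity guarantees, so I expect it to be the crux of the argument.
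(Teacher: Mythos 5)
Your proof is correct: the lifting via $w_j:=f_j(y_j)$ and the gradient inequality gives the relaxation property, Fourier--Motzkin elimination of $w$ combined with the identity $\sum_j\max_{\gamma\in\Gamma_j}g_{j,\gamma}=\max_{\gamma\in\prod_j\Gamma_j}\sum_j g_{j,\gamma_j}$ gives the $\prod_{j=1}^n\abs{\Gamma_j}$ bound, and your strict-convexity argument correctly shows each such inequality is facet-defining so the bound is attained. Note that the paper itself offers no proof of this proposition (it is quoted from Tawarmalani and Sahinidis \cite{tawarmalani2005polyhedral}), so there is nothing to compare against; your argument is the standard one and is consistent with the tangent-line machinery the paper does use in its proof of Lemma~\ref{lemma3} and Proposition~\ref{nosepprop} in Appendix~\ref{proofappendix}.
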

While set $\mathbf{G}$ in Proposition~\ref{generalsep} has a somewhat restrictive structure,  \cite{hijazi2013outer} uses it to constructs the following linear sized polyhedral approximation that is able to prove $\mathbf{F}^n\cap \mathbb{Z}^n=\emptyset$.
\begin{example}\label{pierreexample} An alternative extended formulation of set $\mathbf{F}^n$ from Example~\ref{ex1} is given by 
\begin{subequations}\label{pierre}
\begin{alignat}{3}
x_j-\frac{1}{2}&=y_j,&\quad& \forall j\in \set{1,\ldots,n},\\
y_0&=\sqrt{\frac{n-1}{4}}\label{pierre0}\\
\sum_{j=1}^n y_j^2 &\leq y_0^2.\label{pierre2}
\end{alignat}
\end{subequations}
Constraint \eqref{pierre2} is not convex, but because $y_0$ is fixed to a constant, without loss of generality, we can replace  \eqref{pierre0} by $y_0=\frac{n-1}{4}$ and  \eqref{pierre2} by $\sum_{j=1}^n y_j^2 \leq y_0$. Applying Proposition~\ref{separableprop} for $f_j(y_j)=y_j^2$  and 
$\Gamma_j=\set{-1/2,1/2}$ for all $j\in \set{1,\ldots,n}$ yields the polyhedral relaxation of \eqref{pierre}, given by
\begin{subequations}\label{pierreap}
\begin{alignat}{3}
x_j-\frac{1}{2}&=y_j,&\quad& \forall j\in \set{1,\ldots,n},\\
 y_j -\frac{1}{4} &\leq w_j,&\quad& \forall j\in \set{1,\ldots,n},\\
- y_j - \frac{1}{4} &\leq w_j,&\quad& \forall j\in \set{1,\ldots,n},\\
\sum_{j=1}^n w_j &\leq \frac{n-1}{4}.
\end{alignat}
\end{subequations}
We can then check that  $\set{\bra{x,y,w}\in \Real^{3n}\,:\, \text{\eqref{pierreap}},\quad x\in \mathbb{Z}^n}=\emptyset$.  
\end{example}

Formulation \eqref{generalsep} shares with $\widehat{T}^d\bra{\Omega}$ the possibility of being iteratively refined. Furthermore, while it does not reach the efficiency of $\widehat{L}^d_s$, it does provide a tangible improvement over $\widehat{T}^d\bra{\Omega}$. For instance for $d=2$ \eqref{generalsep} with $r_1+r_2+1$ constraints projects to a polyhedron with at most $r_1\times r_2$ constraints. Hence \eqref{generalsep} can have an up to quadratic constraint multiplying effect (cf. the exponential constraint multiplying effect of $L^2_s$ and the lack of  constraint multiplying effect of $T^2\bra{\Omega}$). However, while formulation \eqref{generalsep} yields a polyhedral relaxation of $\mathbf{F}^n$,  $\norm{y}_2$ cannot be written as a sum of univariate functions and hence formulation  \eqref{generalsep} cannot be used directly to construct a polyhedral relaxation of $\mathbf{L}^d$. Fortunately, as we will show in the following section, a simple homogenization technique can be used to adapt \eqref{generalsep} to yield lifted polyhedral relaxations of $\mathbf{L}^d$ and other \emph{conic} sets. These lifted polyhedral relaxations share with \eqref{generalsep} the possibility of being iteratively refined. To simplify exposition, from now on we refer to relaxations that can be iteratively refined as \emph{dynamic} relaxations and to those that cannot be easily refined as \emph{static} relaxations. 

\section{Dynamic Lifted Polyhedral Relaxations of Conic Sets}\label{dynamicsec}

Proposition~\ref{separableprop} cannot be directly used to construct a polyhedral relaxation of  $\mathbf{L}^d$, but it can be used to construct relaxations of the euclidean ball $\mathbf{B}^d:=\set{y\in \mathbb{R}^d\,:\, \sum_{i=1}^d y_i^2\leq  1}=\set{y\in \mathbb{R}^d\,:\, \bra{1,y}\in \mathbf{L}^d}$. Furthermore, $\mathbf{L}^d$ can be constructed from $\mathbf{B}^d$ through the homogenization $\mathbf{L}^d=\cone\bra{\set{1}\times \mathbf{B}^d}$. Hence, any relaxation of $\mathbf{B}^d$ can be transformed to a relaxation of $\mathbf{L}^d$ through a similar homogenization. This approach can be formally stated through the following proposition, which we prove in Appendix~\ref{proofappendix} (see Section~\ref{futurework} for an alternative derivation).

\begin{proposition}\label{nosepprop} Let $f_j:\Real\to \Real $ be  closed convex functions such that $\lim_{x\to \infty} \frac{f_j(x)}{\abs{x}}=\infty$ for all $j\in \set{1,\ldots, d}$ so that the closure of the perspective function of $f$ is given by 
\[ \bra{cl\tilde{f}}(t,x)=\begin{cases}t f(x/t) &t>0\\ 0 & x=0 \text{ and } t=0\\\infty &\text{o.w.}\end{cases}.\]
If $\mathbf{C}:=\set{y\in \Real^d\,:\, \sum_{j=1}^d f_j(y_j)\leq 1}$ and 
\begin{equation}\label{eee}
\widehat{\mathbf{C}}:= \set{\bra{y_0,y,w}\in \Real^{2d+1}\,:\, \bra{cl\tilde{f_j}}(y_0,y_j)\leq w_j,\quad \forall j\in \set{1,\ldots,d}, \quad \sum_{j=1}^d w_j\leq y_0},
\end{equation} 
then 
$\cone\bra{\set{1}\times \mathbf{C}}=\proj_{\bra{y_0,y}}\bra{\widehat{\mathbf{C}}}$.

Furthermore, if functions $f_j$ are differentiable, then for any   $\Gamma:=\set{\Gamma_{j}\,:\, j\in \set{1,\ldots,n}}$ such that $\Gamma_j\subseteq \Real$ and $\abs{\Gamma_j}<\infty$, 
the set 
\begin{equation*}
\widehat{C}\bra{\Gamma}:=\set{\bra{y_0,y,w}\in \Real^{2d+1}\,:\,
\begin{alignedat}{3}
\bra{f\bra{\gamma}-\gamma f'\bra{\gamma}} y_0 + f'\bra{\gamma}y_j   &\leq w_j,&\quad& \forall \gamma \in \Gamma_j,\, j\in \set{1,\ldots,n}\\
\sum_{j=1}^n w_j &\leq y_0
\end{alignedat}}
\end{equation*}
is a lifted polyhedral relaxation of $\cone\bra{\set{1}\times \mathbf{C}}$  with $1+\sum_{j=1}^n \abs{\Gamma_j}$ constraints.  If the functions are additionally strictly convex the 
 projection onto the  variables $(y_0,y)$ of this relaxation can have up to $\prod_{j=1}^n\abs{\Gamma_j}$ constraints.

 Finally, if  $\bra{y_0,y,w}\in \widehat{C}\bra{\Gamma}\setminus \widehat{\mathbf{C}}$, then there exists $j\in \set{1,\ldots, n}$ and $\gamma\in \Real$ such that $\bra{y_0,y,w}\notin \widehat{C}\bra{\Gamma}$ if we augment $\Gamma_j$ to $\Gamma_j\cup\set{\gamma}$.
\end{proposition}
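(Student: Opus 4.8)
The plan is to prove Proposition~\ref{nosepprop} in four stages, matching its four assertions. The central identity $\cone\bra{\set{1}\times \mathbf{C}}=\proj_{\bra{y_0,y}}\bra{\widehat{\mathbf{C}}}$ is the heart of the matter; the remaining claims are comparatively routine consequences of Proposition~\ref{separableprop} once the homogenization is in place.

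\emph{Stage 1 (the projection identity).} First I would unwind the cone on the left. A point $\bra{y_0,y}$ lies in $\cone\bra{\set{1}\times\mathbf{C}}$ iff either $y_0=0$ and $y=0$, or $y_0>0$ and $y/y_0\in\mathbf{C}$, i.e. $\sum_{j=1}^d f_j\bra{y_j/y_0}\leq 1$. Multiplying through by $y_0>0$, this reads $\sum_{j=1}^d y_0 f_j\bra{y_j/y_0}\leq y_0$, which by definition of the closed perspective is exactly $\sum_{j=1}^d \bra{cl\tilde{f_j}}\bra{y_0,y_j}\leq y_0$. For the containment $\subseteq$, given such a point I would set $w_j:=\bra{cl\tilde{f_j}}\bra{y_0,y_j}$ and verify $\bra{y_0,y,w}\in\widehat{\mathbf{C}}$; for $\supseteq$, given $\bra{y_0,y,w}\in\widehat{\mathbf{C}}$, summing the $d$ inequalities $\bra{cl\tilde{f_j}}\bra{y_0,y_j}\leq w_j$ against $\sum_j w_j\leq y_0$ yields $\sum_j \bra{cl\tilde{f_j}}\bra{y_0,y_j}\leq y_0$, and the perspective identity above recovers membership in the cone. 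The delicate point, and what I expect to be the \textbf{main obstacle}, is the boundary case $y_0=0$: here the superlinear growth hypothesis $\lim_{x\to\infty} f_j(x)/\abs{x}=\infty$ is precisely what forces $\bra{cl\tilde{f_j}}\bra{0,y_j}=\infty$ for $y_j\neq 0$ and $=0$ for $y_j=0$, so that $y_0=0$ in $\widehat{\mathbf{C}}$ compels $y=0$, matching the cone's apex. I would make this explicit using the formula for $cl\tilde{f}$ given in the statement, being careful about the recession/closure behaviour so that no spurious rays are introduced on either side.

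\emph{Stage 2 (the polyhedral relaxation).} Assuming differentiability, I would observe that for each fixed $\gamma$ the affine minorant $\bra{f_j(\gamma)-\gamma f_j'(\gamma)}y_0 + f_j'(\gamma)y_j$ of $\bra{cl\tilde{f_j}}\bra{y_0,y_j}$ is just the homogenization of the tangent-line inequality $f_j(\gamma)+f_j'(\gamma)\bra{y_j-\gamma}$ appearing in Proposition~\ref{separableprop}: replacing $y_j$ by $y_j/y_0$ and multiplying by $y_0$ converts \eqref{generalsep} into the defining inequalities of $\widehat{C}\bra{\Gamma}$. Since each such affine function underestimates the convex $\bra{cl\tilde{f_j}}$, we get $\widehat{\mathbf{C}}\subseteq\widehat{C}\bra{\Gamma}$, and projecting preserves this, so $\cone\bra{\set{1}\times\mathbf{C}}=\proj_{\bra{y_0,y}}\bra{\widehat{\mathbf{C}}}\subseteq\proj_{\bra{y_0,y}}\bra{\widehat{C}\bra{\Gamma}}$, establishing that $\widehat{C}\bra{\Gamma}$ is a lifted polyhedral relaxation. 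The constraint count $1+\sum_j\abs{\Gamma_j}$ is read directly off the definition.

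\emph{Stages 3 and 4 (projection size and separation).} For the projected-constraint bound $\prod_j\abs{\Gamma_j}$ under strict convexity, I would transfer the corresponding claim from Proposition~\ref{separableprop}: Fourier--Motzkin elimination of the $w_j$ variables produces one inequality for each way of selecting a single active minorant per coordinate $j$, giving the product bound, and strict convexity guarantees these facets are genuinely distinct rather than collapsing. Finally, for the separation/refinement claim, given $\bra{y_0,y,w}\in\widehat{C}\bra{\Gamma}\setminus\widehat{\mathbf{C}}$, some coordinate $j$ must violate $\bra{cl\tilde{f_j}}\bra{y_0,y_j}\leq w_j$; I would choose $\gamma:=y_j/y_0$ (when $y_0>0$), for which the tangent inequality is tight at the true perspective value and hence strictly separates $\bra{y_0,y,w}$, so augmenting $\Gamma_j$ with this $\gamma$ cuts the point off. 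The $y_0=0$ subcase is handled again via the superlinearity of Stage~1, which forces any violated point to have $y_0>0$. This mirrors the refinement observation already made for $O^d\bra{\Omega}$ in the text.
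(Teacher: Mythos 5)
Your overall route coincides with the paper's: Appendix~\ref{proofappendix} likewise proves the projection identity by homogenizing the sublevel set (Lemma~\ref{lemma2}), characterizes $\epi\bra{cl\tilde{f_j}}$ via the homogenized tangent inequalities (Lemma~\ref{lemma3}), and then reads the remaining claims off Proposition~\ref{separableprop} by fixing $y_0=1$. Stages 1--3 of your outline are sound and correspond to exactly these steps.

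There is, however, a genuine gap in Stage 4. You assert that superlinearity ``forces any violated point to have $y_0>0$,'' so that the separating $\gamma$ can always be taken to be $y_j/y_0$. This is false: $\widehat{C}\bra{\Gamma}$ is cut out by only finitely many inequalities and does not imply $y_0\geq 0$, let alone $y_0>0$. Concretely, take $d=1$, $f_1(x)=x^2$ and $\Gamma_1=\set{0}$; then $\widehat{C}\bra{\Gamma}=\set{\bra{y_0,y_1,w_1}\,:\,0\leq w_1,\ w_1\leq y_0}$ contains the point $\bra{0,5,0}$, which is not in $\widehat{\mathbf{C}}$ since $\bra{cl\tilde{f_1}}(0,5)=\infty$. (With $\Gamma_1=\emptyset$ one even gets points with $y_0<0$.) The separation claim is still true for such points, but the certificate is no longer the tangent at $y_j/y_0$: one must invoke the full equality $\epi\bra{cl\tilde{f_j}}=\set{\bra{y_0,y_j,w_j}\,:\,\bra{f_j\bra{\gamma}-\gamma f_j'\bra{\gamma}}y_0+f_j'\bra{\gamma}y_j\leq w_j\ \ \forall\gamma\in\Real}$, so that any point outside the epigraph violates some tangent inequality for some real $\gamma$. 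Proving this equality --- rather than only the containment ``each tangent underestimates the perspective'' that you use in Stage 2 --- is the technical heart of the paper's Lemma~\ref{lemma3}: it requires the limit arguments $f_j'(\gamma)\to\pm\infty$ and $f_j\bra{\gamma}-\gamma f_j'\bra{\gamma}\to-\infty$ to show that the infinite family of tangents excludes $y_0<0$ and forces $y=0$ and $w\geq 0$ when $y_0=0$. Your outline never establishes this reverse containment, and the shortcut you propose in its place does not hold.
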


Obtaining a dynamic lifted polyhedral relaxation for $\mathbf{L}^d$  is a direct corollary of Proposition~\ref{nosepprop} by letting $f_j(x_j)=x_j^2$ and using $\mathbf{L}^d=\cone\bra{\set{1}\times \mathbf{B}^d}$  ( See \cite[p. 109]{ben2001lectures} for an alternative derivation). One notable difference between the general approximation from Proposition~\ref{nosepprop} and the approximation of $\mathbf{L}^d$ from Corollary~\ref{coro2} below is the inclusion of constraint $y_0\geq 0$ in the later. This constraint is excluded in Proposition~\ref{nosepprop}, but  is implied by the nonlinear constraints defining $\widehat{\mathbf{C}}$ and can be arbitrarily approximated by the linear constraints of $\widehat{C}\bra{\Gamma}$ by appropriately selecting $\Gamma$ (e.g. see the proof of Lemma~\ref{lemma3} in Appendix~\ref{proofappendix}). The constraint is explicitly included in Corollary~\ref{coro2} to ensure conic quadratic representability of $\widehat{\mathbf{H}}^d$ and for computational convenience and numerical stability in $\widehat{H}^d\bra{\Gamma}$.
\begin{corollary}\label{coro2}
Let 
\begin{equation}\label{separablequadratic}
\widehat{\mathbf{H}}^d:=\set{\bra{y_0,y,w}\in \Real^{2d+1}\,:\,
y_j^2 \leq w_j y_0,\quad \forall j\in \set{1,\ldots,d},\quad
\sum_{j=1}^d w_j \leq y_0,\quad 0\leq y_0},
\end{equation}
then 
$\mathbf{L}^d=\proj_{\bra{y_0,y}}\bra{\widehat{\mathbf{H}}^d}$ and hence $\widehat{\mathbf{H}}^d$ is a  lifted reformulation of $\mathbf{L}^d$ with $d$ rotated two-dimensional conic quadratic constraints, one linear constraint\footnote{Note that the complete description of each rotated two-dimensional conic quadratic constraints is $y_j^2 \leq w_j y_0$ and $0\leq y_0$.} and $d$ auxiliary variables.

Furthermore, for any $\Gamma:=\set{\Gamma_j\,:\, j\in \set{1,\ldots,n}}$ for $\Gamma_j\subseteq \Real$ with $\abs{\Gamma_j}<\infty$, the set 
\begin{equation*}
   \widehat{H}^d\bra{\Gamma}:=\set{ (y_0,y,w)\in \Real^{2d+1}\,:\,\begin{alignedat}{5}2\gamma y_j- \gamma^2 y_0  &\leq w_j, &\quad& \forall j\in \set{1,\ldots,d},\,\gamma\in \Gamma_j,\\
\sum_{j=1}^d w_j &\leq y_0,\\ 0&\leq y_0. \end{alignedat} }
 \end{equation*}
is a lifted polyhedral relaxation of $\mathbf{L}^d$. 

Finally, if  $\bra{y_0,y,w}\in \widehat{H}^d\bra{\Gamma}\setminus \widehat{\mathbf{H}}^d$ and $y_0\neq 0$, then there exists $j\in \set{1,\ldots, d}$ such that $\bra{y_0,y,w}\notin \widehat{H}^d\bra{\Gamma}$ if we augment $\Gamma_j$ to $\Gamma_j\cup\set{\gamma\bra{y_0,y_j}}$
 where $\gamma\bra{y_0,y_j}:=y_j/y_0$. Similarly if $\bra{y_0,y,w}\in \widehat{H}^d\bra{\Gamma}\setminus \widehat{\mathbf{H}}^d$ and $y_0= 0$, then $\bra{y_0,y,w}\notin \widehat{H}^d\bra{\Gamma}$ if we augment $\Gamma_j$ for all $j\in \set{1,\ldots,d}$ to $\Gamma_j\cup\set{-\gamma,\gamma}$ for any $\gamma>0$.
\end{corollary}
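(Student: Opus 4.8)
The plan is to derive everything from Proposition~\ref{nosepprop} by specializing it to the separable quadratic $f_j(x)=x^2$ for every $j$, for which $f_j'(x)=2x$. With this choice $\mathbf{C}=\set{y\,:\,\sum_j y_j^2\leq 1}=\mathbf{B}^d$, and since $\cone\bra{\set{1}\times\mathbf{B}^d}=\mathbf{L}^d$ the conclusion of the proposition is already phrased in terms of the Lorentz cone. First I would record the two computations that make the specialization concrete: the closure of the perspective of $x^2$ equals $y_j^2/y_0$ on $y_0>0$ (and $0$ at the origin), so the constraint $\bra{cl\tilde{f_j}}(y_0,y_j)\leq w_j$ becomes the rotated conic constraint $y_j^2\leq w_j y_0$; and the gradient inequality $\bra{f(\gamma)-\gamma f'(\gamma)}y_0+f'(\gamma)y_j\leq w_j$ becomes exactly $2\gamma y_j-\gamma^2 y_0\leq w_j$. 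These identifications show that $\widehat{\mathbf{H}}^d$ and $\widehat{H}^d\bra{\Gamma}$ are, up to the explicit constraint $y_0\geq 0$, precisely the sets $\widehat{\mathbf{C}}$ and $\widehat{C}\bra{\Gamma}$ produced by Proposition~\ref{nosepprop}.

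For the first two claims I would then argue that adjoining $y_0\geq 0$ does not disturb the projections. Because $y_0\geq 0$ constrains only the projected variables and is satisfied by every point of $\mathbf{L}^d$ (indeed $\norm{y}_2\leq y_0$ forces $y_0\geq 0$), intersecting the lifted set with $\set{y_0\geq 0}$ intersects its projection with the same halfspace and removes no point of $\mathbf{L}^d$. Concretely, $\proj_{\bra{y_0,y}}\bra{\widehat{C}\bra{\Gamma}\cap\set{y_0\geq 0}}=\proj_{\bra{y_0,y}}\bra{\widehat{C}\bra{\Gamma}}\cap\set{y_0\geq 0}\supseteq \mathbf{L}^d$, which gives the relaxation claim for $\widehat{H}^d\bra{\Gamma}$; the identical observation applied to $\widehat{\mathbf{C}}$ together with $\mathbf{L}^d=\proj_{\bra{y_0,y}}\bra{\widehat{\mathbf{C}}}$ gives the exact reformulation $\mathbf{L}^d=\proj_{\bra{y_0,y}}\bra{\widehat{\mathbf{H}}^d}$. (Alternatively both inclusions can be checked by hand, exhibiting $w_j=y_j^2/y_0$ when $y_0>0$ and $w=0$ when $y_0=0$.) The stated counts of constraints and auxiliary variables are then read off directly from the two descriptions.

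The substantive work is the final separation statement, which I would split on the sign of $y_0$. When $y_0\neq 0$ (hence $y_0>0$ on $\widehat{H}^d\bra{\Gamma}$), a point of $\widehat{H}^d\bra{\Gamma}\setminus\widehat{\mathbf{H}}^d$ already satisfies $\sum_j w_j\leq y_0$ and $y_0\geq 0$, so its only possible violation is $y_j^2>w_j y_0$ for some $j$; adjoining $\gamma\bra{y_0,y_j}=y_j/y_0$ to $\Gamma_j$ yields the inequality $y_j^2/y_0\leq w_j$, which the point fails, so it leaves the refined relaxation. When $y_0=0$, the violated constraint cannot be $\sum_j w_j\leq 0$, so it must be that $y_j\neq 0$ for some $j$; augmenting every $\Gamma_j$ by $\set{-\gamma,\gamma}$ forces $2\gamma\abs{y_j}\leq w_j$ for all $j$, and summing gives $2\gamma\sum_j\abs{y_j}\leq\sum_j w_j\leq 0$, a contradiction for any $\gamma>0$ since some $\abs{y_j}>0$.

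The main obstacle, and the reason the corollary is stated with $y_0\geq 0$ spelled out, is the degenerate behaviour at $y_0=0$: there the rotated-cone constraint $y_j^2\leq w_j y_0$ does not coincide with the closed perspective constraint (the latter additionally forcing $w_j\geq 0$), so I must check that this mismatch affects neither the projection in the first two claims nor the separation argument. The $y_0=0$ case of separation is exactly where the natural cut $\gamma=y_j/y_0$ is unavailable, which is why the symmetric pair $\set{-\gamma,\gamma}$ must be introduced across all coordinates and why the summation (an $\ell_1$-type) argument replaces the single-coordinate argument used when $y_0>0$.
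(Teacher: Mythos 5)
Your proposal is correct and follows the same route the paper takes: it obtains the corollary by specializing Proposition~\ref{nosepprop} to $f_j(x)=x^2$ together with $\mathbf{L}^d=\cone\bra{\set{1}\times\mathbf{B}^d}$, and then accounts for the explicitly adjoined constraint $y_0\geq 0$ exactly as discussed in the text preceding the corollary. You additionally spell out the details the paper leaves implicit --- in particular the two-case separation argument and the degeneracy at $y_0=0$, where $\widehat{\mathbf{H}}^d$ and the closed-perspective set differ only in the sign of the $w_j$'s --- and these added details are correct.
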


With a slight abuse of terminology,  we refer to $\widehat{H}^d\bra{\Gamma}$ as the separable relaxation to emphasize the fact that it considers nonlinearities one variable at a time in a similar way to Proposition~\ref{separableprop}.

\subsection{Relation to Perspective Reformulations}\label{prespectivesubsection}

Perspective functions have been used to  model unions of convex sets \cite{ceriasoares99,grossmann2003generalized,StubbsMehrotra1995} for many years, with a recent emphasis on  modeling of the union of a point and a single convex set \cite{frangioni2006perspective,frangioni2009computational,frangioni2011projected,DBLP:conf/ipco/GunlukL08,DBLP:journals/mp/GunlukL10,perspsurvey,hijazi2012mixed}. We now show how these techniques can be adapted to give an alternative construction of the lifted separable reformulation $\widehat{\mathbf{C}}$.

The alternate construction of $\widehat{\mathbf{C}}$ is based on the lifted reformulation of $\mathbf{C}:=\set{y\in \Real^d\,:\, \sum_{j=1}^d f_j(y_j)\leq 1}$ given by 
\[ \mathbf{C}_{+,1} :=\set{ \bra{y,y_0,w}\in \mathbb{R}^{2d+1}\,:\, f_j(y_j)\leq w_j, \quad \forall j\in \set{1,\ldots,d},\quad \sum_{j=1}^d w_j\leq y_0, \quad y_0= 1} \]
and the set $\mathbf{C}_{+,0}:=\set{ \bra{y,y_0,w}\in \mathbb{R}^{2d+1}\,:\,y_0=y_i=w_i=0, \quad \forall i\in \set{1,\ldots,d}}$. Using known results (e.g. Section 3 of \cite{DBLP:journals/mp/GunlukL10} ) we have that, under the assumptions of Proposition~\ref{nosepprop},  
\begin{equation}\label{alternate}\hspace{-0.018in}\conv\bra{ \mathbf{C}_{+,1}\cup \mathbf{C}_{+,0}}=\set{ \bra{y,y_0,w}\in \mathbb{R}^{2d+1}\,:\, \begin{alignedat}{3}\bra{cl\tilde{f_j}}(y_0,y_j)&\leq w_j, \; \forall j\in \set{1,\ldots,d},\;\\ \sum_{j=1}^d w_j&\leq y_0,\\ \; 0\leq y_0&\leq 1\end{alignedat}}.\end{equation}
 We can also check that $\cone\bra{\set{1}\times \mathbf{C}}=\proj_{\bra{y_0,y}}\bra{\cone\bra{\conv\bra{\mathbf{C}_{+,1}\cup \mathbf{C}_{+,0}}}}$ and that $\cone\bra{\conv\bra{\mathbf{C}_{+,1}\cup \mathbf{C}_{+,0}}}$ is obtained by removing $y_0\leq 1$ from the right hand side of \eqref{alternate}, from which we  precisely obtain  $\widehat{\mathbf{C}}$\footnote{Remember that, while $\widehat{\mathbf{C}}$ does not explicitly include $y_0\geq 0$, any point in $\widehat{\mathbf{C}}$ satisfies this constraint under the assumptions of Proposition~\ref{nosepprop} (e.g. see proof of Lemma~\ref{lemma3} in Appendix~\ref{proofappendix}).}. Finally, inequalities $\bra{f\bra{\gamma}-\gamma f'\bra{\gamma}} y_0 + f'\bra{\gamma}y   \leq w_j$ from the polyhedral approximation of $\mathcal{C}$ can be obtained from the \emph{perspective cuts} from \cite{frangioni2006perspective}.

\section{Lifted LP-Based Algorithms }\label{algosec}

We can construct lifted LP-based algorithms for MICQP by combining lifted polyhedral relaxations of $\mathbf{L}^d$ with the general algorithmic framework of traditional LP-based branch-and-bound algorithms \cite{abhishek2010filmint,bonami2008algorithmic,bonami2012algorithms,FletcherLeyffer1994,QuesadaGrossmann1992}. For static relaxations such as $\widehat{L}^d_{s\bra{\varepsilon}}$,  \cite{A-Lifted-Linear-Programming}  introduces the LiftedLP algorithm,  which uses branching and the sporadic solutions of nonlinear relaxations to avoid the need to refine the polyhedral relaxation. Similarly, for dynamic polyhedral relaxations such  as $\widehat{H}^d\bra{\Gamma}$ we can follow the approach of \cite{hijazi2013outer} and adapt a traditional LP-based branch-and-bound algorithm to construct and refine the polyhedral relaxation on the space of original and auxiliary variables. However, for the polyhedral relaxations that are based on a nonlinear reformulation of $\mathbf{L}^d$, such as $\widehat{\mathbf{T}}^d$ defined in \eqref{nolineartower}, we can more easily adapt a traditional LP-based algorithm by simply giving it this reformulation. We now describe how to do this for three relaxations. We then describe two versions of the LiftedLP algorithm \cite{A-Lifted-Linear-Programming} that we test in Section~\ref{compres}.

\subsection{Algorithms from Nonlinear Reformulations}\label{nonlinearRelax}
The first two relaxations we consider correspond to the tower of variables reformulation $\widehat{\mathbf{T}}^d$  and the separable reformulation $\widehat{\mathbf{H}}^d$  from Corollary~\ref{coro2}. The third one is a combination of these two relaxations. 
 For all three relaxations we give to the LP-based branch-and-bound algorithm a reformulation of $\micp$ of the form
\begin{subequations}\label{milpr}
\begin{alignat}{3}
  \opt_{\micp\bra{\set{\widehat{\mathbf{Q}}^{d_l}}_{l=1}^k}}:=&\max & \quad cx  \\
 \notag&s.t.\\
 &      & E x  &\leq h,  &\\
 &      & A^l x+b^l &= y^l,&\quad& \forall l\in \set{1,\ldots,q},\\
 &      & a^l \cdot x + b_0^l&=y_0^l,&\quad& \forall l\in \set{1,\ldots,q},\\
 &&     \bra{y_0^l,y^l,z^l} &\in \widehat{\mathbf{Q}}^{d_l},&\quad& \forall l\in \set{1,\ldots,q}, \label{extnonlinear}\\
 & & x &\in \Real^{n},\\
& & x_j &\in \Int^{n},&\quad& \forall j\in I,
 \end{alignat}
\end{subequations}
where $\widehat{\mathbf{Q}}^{d_l}$ is a lifted reformulation of $\mathbf{L}^{d_l}$ such that $\proj_{\bra{y_0^l,y^l}}\bra{\widehat{\mathbf{Q}}^{d_l}}=\mathbf{L}^{d_l}$. Because the LP-based algorithm will consider auxiliary variables $z^l$ as formulation variables it will construct and refine a polyhedral relaxation of nonlinear constraints \eqref{extnonlinear} in the $\bra{y_0^l,y^l,z^l}$ variable space. This will effectively construct a lifted polyhedral relaxation of $\mathbf{L}^{d_l}$ using auxiliary variables $z^l$. 

The nonlinear reformulation associated to the  tower of variables relaxation from Corollary~\ref{dynamictower} is equal $ \widehat{\mathbf{T}}^d$ defined in  \eqref{nolineartower}. This reformulation uses auxiliary variables $z^l=t^l\in \Real^{R(d_l)}$ for $R(d)$ defined in Proposition~\ref{nemiprop} 
and corresponds to replacing \eqref{extnonlinear} with
\begin{subequations}\label{towerreform}
\begin{alignat}{4}
   y_0^l&=t^{l,K_l}_1,&\quad& \forall l\in \set{1,\ldots,q},\\
   t^{l,0}_i&=y_i^l, &\quad&\forall i \in \{1,\ldots, d_l\}, l\in \set{1,\ldots,q},\\
   (t^{l,k+1}_i,t^{l,k}_{2i-1},t^{l,k}_{2i})&\in \mathbf{L}^2, &\quad&\forall i\in\{1,\ldots,\lfloor r_{l,k/2} \rfloor\}, k\in\{0,\ldots,K_l-1\},l\in \set{1,\ldots,q},\label{repl2}\\
   t^{l,k}_{r_{l,k}}&=t^{l,k+1}_{\lceil r_{l,k}/2 \rceil},          &&\forall k\in\{0,\ldots,K_l-1\} \text{ s.t. $r_{l,k}$ is odd}, l\in \set{1,\ldots,q},
\end{alignat}
\end{subequations}
where $K_l$ and $\{r_{l,k}\}_{k=0}^{K_l}$ correspond to $K$ and $\{r_{k}\}_{k=0}^K$ defined in Proposition~\ref{nemiprop} for $d=d_l$.

The nonlinear reformulation associated to the separable relaxation described in Corollary~\ref{coro2} is equal to $\widehat{\mathbf{H}}^d$ defined in \eqref{separablequadratic}. This reformulation 
uses auxiliary variables $z=w\in \Real^d$ and corresponds to replacing \eqref{extnonlinear} with
\begin{subequations}\label{sepreform}
\begin{alignat}{4}
   \bra{y_j^l}^2  &\leq w_j^l y_0^l,&\quad& \forall j\in \set{1,\ldots,d_l},l\in \set{1,\ldots,q},\label{rotatedcone} \\
   \sum_{j=1}^d w_j^l &\leq y_0^l,&&\forall l\in \set{1,\ldots,q},\\
   0&\leq y_0^l,&& \forall l\in \set{1,\ldots,q}.
\end{alignat}
\end{subequations}

The final reformulation is a combination of the previous two that replaces $\mathbf{L}^2$ in \eqref{repl2} with $\widehat{\mathbf{H}}^2$. This reformulation uses auxiliary variables $z^l=\bra{t^l,v^l}\in \Real^{R(d_l)+G\bra{D^l_2}}$ for $R(d)$ and $G(D)$ defined in Proposition~\ref{nemiprop} and $D_2^l:=\set{2\,:\,  i\in\{1,\ldots,\lfloor r_{l,k}/2 \rfloor\},\ k\in \set{0,\ldots,K_l-1}}$
where $K_l$ and $\{r_{l,k}\}_{k=0}^{K_l}$ correspond to $K$ and $\{r_{k}\}_{k=0}^K$ defined in Proposition~\ref{nemiprop} for $d=d_l$. The version of constraint \eqref{extnonlinear} for this case is
\begin{subequations}\label{towersepreform}
\begin{alignat}{4}
   y_0^l&=t^{l,K}_1,&&\forall l\in \set{1,\ldots,q},\\
   t^{l,0}_i&=y_i^l, &&\forall i\in \{1,\ldots, d\},l\in \set{1,\ldots,q},\\
   \bra{t^{l,k}_{2i-1}}^2 &\leq v^{l,k}_{i,1} t^{l,k+1}_i,&\quad&\forall i\in\{1,\ldots,\lfloor r_{l,k}/2 \rfloor\}, k\in\{0,\ldots,K_l-1\},l\in \set{1,\ldots,q},\\
                  \bra{t^{l,k}_{2i}}^2 &\leq v^{l,k}_{i,2} t^{l, k+1}_i,&\quad&\forall i\in\{1,\ldots,\lfloor r_{l,k}/2 \rfloor\},k\in\{0,\ldots,K_l-1\},l\in \set{1,\ldots,q},\\
   v^{l,k}_{i,1} +v^{l,k}_{i,1}  & \leq t^{l,k+1}_i,&\quad&\forall i\in\{1,\ldots,\lfloor r_{l,k}/2 \rfloor\},k\in\{0,\ldots,K_l-1\},l\in \set{1,\ldots,q},\\
   t^{l,k}_{r_{l,k}}  &=t^{l,k+1}_{\lceil r_{l,k}/2 \rceil},          &&\forall k\in\{0,\ldots,K_l-1\} \text{ s.t. $r_{l,k}$ is odd}, l\in \set{1,\ldots,q}. 
\end{alignat}
\end{subequations}

In Section~\ref{compres} we compare the effectiveness of these  nonlinear reformulations with two versions of the LiftedLP algorithm of \cite{A-Lifted-Linear-Programming}, which we now describe in detail.

\subsection{Branch-based LiftedLP  Algorithm and Cut-based Adaptation}\label{liftedLPimplementation}

The original LiftedLP algorithm of \cite{A-Lifted-Linear-Programming}  uses a version of $\widehat{L}^d_{s\bra{\varepsilon}}$ introduced in \cite{glineur2000}. This version corresponds to the static lifted polyhedral relaxation described by the following corollary.
\begin{corollary}\label{glineurcoro}Let   $K=\lceil \log_2(d)\rceil$ and $\{r_k\}_{k=0}^K$ be defined by the recursion $r_0=d$, ${r_{k+1}=\lceil r_k /2 \rceil}$ for $k\in \{0,\ldots,K-1\}$. Furthermore, for any $\varepsilon\in(0,1/2)$ let
\begin{equation*}
  s_k(\varepsilon)=\left\lceil \frac{k+1}{2}\right\rceil -\left\lceil \log_4 \left( \frac{16}{9} \pi^{-2} \log(1+\varepsilon)\right)\right\rceil
\end{equation*}
for each $k\in \set{0,\ldots,K-1}$. Finally, let   
$L^d_\varepsilon:=\widehat{T}^d\bra{\mathcal{N}_\varepsilon,D_{\varepsilon}}$ where \[\mathcal{N}_\varepsilon:=\set{N^2_{s_k\bra{\varepsilon}}\,:\, i\in\{1,\ldots,\lfloor r_k/2 \rfloor\},\,k\in \set{0,\ldots,K-1}}\]
and  $D_{\varepsilon}:=\set{2s_k\bra{\varepsilon}\,:\, i\in\{1,\ldots,\lfloor r_k/2 \rfloor\},\,k\in \set{0,\ldots,K-1}}$.
Then, $L^d_{\varepsilon}$ is a lifted polyhedral relaxation of $\mathbf{L}^d$ with approximation quality $\varepsilon$ and $O(d\log(1/\varepsilon))$ variables and constraints.
\end{corollary}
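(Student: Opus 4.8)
The plan is to recognize $L^d_\varepsilon$ as a specific instance of the generic tower relaxation $\widehat{T}^d(\mathcal{P},D)$ of Proposition~\ref{nemiprop} and then to control its approximation quality and its size by exploiting the explicit choice of $s_k(\varepsilon)$. Concretely, $L^d_\varepsilon=\widehat{T}^d(\mathcal{N}_\varepsilon,D_\varepsilon)$ uses at every node of level $k$ the two-dimensional relaxation $\widehat{N}^2_{s_k(\varepsilon)}$ of $\mathbf{L}^2$, which by Proposition~\ref{2dnemi} is a lifted polyhedral relaxation of $\mathbf{L}^2$ with approximation quality $\varepsilon_k=\cos(\pi/2^{s_k(\varepsilon)})^{-1}-1$ and with $2s_k(\varepsilon)$ auxiliary variables and $O(s_k(\varepsilon))$ constraints. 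Feeding this family into Proposition~\ref{nemiprop} immediately shows that $L^d_\varepsilon$ is a lifted polyhedral relaxation of $\mathbf{L}^d$ whose approximation quality equals $(\prod_{k=0}^{K-1}(1+\varepsilon_k))-1=(\prod_{k=0}^{K-1}\cos(\pi/2^{s_k(\varepsilon)})^{-1})-1$. Thus only two things remain: the quality bound $\prod_{k=0}^{K-1}\cos(\pi/2^{s_k(\varepsilon)})^{-1}\le 1+\varepsilon$ and the size estimate $O(d\log(1/\varepsilon))$.

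For the quality bound I would take logarithms and reduce the claim to $\sum_{k=0}^{K-1}(-\log\cos(\pi/2^{s_k(\varepsilon)}))\le\log(1+\varepsilon)$. First I would record that $\varepsilon\in(0,1/2)$ forces $\delta:=\tfrac{16}{9}\pi^{-2}\log(1+\varepsilon)<1$, so the bracketed ceiling $\lceil\log_4\delta\rceil$ is at most $-1$ and every $s_k(\varepsilon)\ge\lceil(k+1)/2\rceil+1\ge 2$; in particular each angle $\pi/2^{s_k(\varepsilon)}$ lies in $(0,\pi/4]$, where $\widehat{N}^2_{s_k(\varepsilon)}$ is well defined and the secant is controlled. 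On that range the elementary estimates $\cos\theta\ge 1-\theta^2/2$ and $-\log(1-t)\le t/(1-t)$ give a bound of the form $-\log\cos(\pi/2^s)\le c\,4^{-s}$ for an explicit constant $c$. Substituting the definition of $s_k(\varepsilon)$ yields $4^{-s_k(\varepsilon)}=4^{-\lceil(k+1)/2\rceil}\,4^{\lceil\log_4\delta\rceil}\le 4\delta\,2^{-(k+1)}$, so the sum is dominated by a geometric series; collapsing $\sum_{k\ge 0}2^{-(k+1)}$ (or, keeping the ceiling more tightly, $\sum_{k\ge 0}4^{-\lceil(k+1)/2\rceil}=\tfrac{2}{3}$) and cancelling the factors $\pi^{-2}$ and $\log(1+\varepsilon)$ against the constant $16/9$ leaves the required inequality. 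This constant-chasing—making the numerical factor $\tfrac{16}{9}\pi^{-2}$ absorb $c$ and the geometric sum exactly—is where the precise shape of $s_k(\varepsilon)$ is used, and it is the step I expect to be most delicate: the crude quadratic bound above is lossy, so one likely needs a sharper estimate of $-\log\cos(\pi/2^s)$ (as in the analysis of \cite{glineur2000,BenTalNemirovski2001}) for the constants to close.

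For the size estimate I would count per level and sum. Level $k$ contains $\lfloor r_k/2\rfloor$ copies of $\widehat{N}^2_{s_k(\varepsilon)}$, each contributing $O(s_k(\varepsilon))$ variables and constraints, and $s_k(\varepsilon)=\tfrac{k+1}{2}+O(\log(1/\varepsilon))$ because $-\lceil\log_4\delta\rceil=O(\log(1/\varepsilon))$. The recursion $r_{k+1}=\lceil r_k/2\rceil\le r_k/2+1$ gives $r_k\le d\,2^{-k}+2$ and hence $\sum_{k=0}^{K-1}\lfloor r_k/2\rfloor=O(d)$, so the total is $\sum_{k=0}^{K-1}\lfloor r_k/2\rfloor\,O(s_k(\varepsilon))$. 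The $O(\log(1/\varepsilon))$ part of $s_k(\varepsilon)$ contributes $O(\log(1/\varepsilon))\sum_k\lfloor r_k/2\rfloor=O(d\log(1/\varepsilon))$, while the linearly growing $\tfrac{k+1}{2}$ part is killed by the geometric decay of $r_k$, contributing only $\sum_k (d\,2^{-k})\,O(k)=O(d)$. Adding the two gives $O(d\log(1/\varepsilon))$ variables and constraints, which together with the quality bound completes the proof.
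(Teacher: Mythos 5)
The paper does not actually prove this corollary: it is stated as a direct import from \cite{glineur2000} (via \cite{A-Lifted-Linear-Programming}), and Appendix~\ref{proofappendix} contains only the proof of Proposition~\ref{nosepprop}. So there is no in-paper argument to compare you against, and your overall framework --- instantiate Proposition~\ref{nemiprop} with $P_{i,k}=\widehat{N}^2_{s_k(\varepsilon)}$, reduce the quality claim to $\sum_{k}\ln\sec(\pi/2^{s_k(\varepsilon)})\leq\ln(1+\varepsilon)$, and count sizes level by level --- is exactly the right one. Your size count is complete and correct (indeed $\sum_{k=0}^{K-1}\lfloor r_k/2\rfloor=r_0-r_K=d-1$, so the count is even cleaner than you state).

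The quality bound, however, is a genuine gap, and you correctly sensed it: the constants do not close, and the problem is not that your secant estimate is too crude. Writing $\delta:=\tfrac{16}{9}\pi^{-2}\log(1+\varepsilon)$, the rounding in the stated formula gives $4^{-s_k}=4^{-\lceil(k+1)/2\rceil}\,4^{\lceil\log_4\delta\rceil}$, and $4^{\lceil\log_4\delta\rceil}$ can be as large as (essentially) $4\delta$; combined with $\sum_{k\geq 0}4^{-\lceil(k+1)/2\rceil}=\tfrac{2}{3}$ and the \emph{lower} bound $\ln\sec\theta\geq\theta^2/2$, the sum $\sum_k\ln\sec(\pi/2^{s_k})$ can reach roughly $\tfrac{\pi^2}{2}\cdot 4\delta\cdot\tfrac{2}{3}=\tfrac{64}{27}\log(1+\varepsilon)\approx 2.37\log(1+\varepsilon)$. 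Since $\theta^2/2$ is a lower bound on $\ln\sec\theta$, no sharper cosine estimate can rescue this; concretely, choosing $\varepsilon\approx 0.092$ so that $\log_4(1/\delta)$ sits just below $3$ forces $s_0=s_1=3$ and already $\sec(\pi/8)^2\approx 1.17>1+\varepsilon$. The statement does close if either (i) the outer ceiling is effectively a floor of $\log_4\delta$ (equivalently, $s_k$ is rounded up rather than down), in which case $4^{-s_k}\leq 4^{-\lceil(k+1)/2\rceil}\delta$ and even your bound $\ln\sec\theta\leq\frac{\theta^2/2}{1-\theta^2/2}\leq 0.723\,\theta^2$ for $\theta\leq\pi/4$ gives $\sum_k\ln\sec(\pi/2^{s_k})\leq\tfrac{32\cdot 0.723}{27}\log(1+\varepsilon)<\log(1+\varepsilon)$; or (ii) the two-dimensional gadget with $s$ variable pairs is credited with quality $\sec(\pi/2^{s+1})-1$ (the Ben-Tal--Nemirovski indexing) rather than the $\sec(\pi/2^{s})-1$ of Proposition~\ref{2dnemi}, which shifts every term by a factor $1/4$ and again closes the sum. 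To complete your proof you must commit to one of these readings and carry the constants through explicitly; as written, the central inequality of the corollary is asserted rather than established.
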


The LiftedLP algorithm replaces every occurrence $\mathbf{L}^d$ with $L^d_{\varepsilon}$ for an appropriately chosen $\varepsilon$. To avoid the need to refine these approximations the algorithm sporadically solves some nonlinear relaxations and follows a specialized branching convention. This  branch-based approach to the LiftedLP algorithm was motivated by the ineffectiveness of traditional polyhedral relaxations of $\mathbf{L}^d$ in certain classes of problems. However, the advent of dynamic lifted polyhedral relaxations suggests the possible effectiveness of a cut-based version of the LiftedLP algorithm. We now concurrently describe  the original branch-based  and the  cut-based variant of the LiftedLP algorithm. The cut-based variant combines static relaxation $L^d_{\varepsilon}$ with a dynamic relaxation of $\mathbf{L}^d$. While any of the relaxations described in the previous section can be used, preliminary computational test showed that $\widehat{H}^d\bra{\Gamma}$ provides comparable or better performance than the other relaxations. For this reason, and because the extension to other dynamic relaxations is straightforward,  we here only describe the variant for $\widehat{H}^d\bra{\Gamma}$.  

Both versions of the LiftedLP algorithm are based on the LP relaxation of $\micp$ defined in \eqref{milp} given by
\begin{subequations}\label{lp}
\begin{alignat}{3}
  \opt_{\lp\bra{\mathbf{l},\mathbf{u},\set{\Gamma^l}_{l=1}^k}}:=&\max & \quad  cx  \\
 \notag&s.t.\\
 &      &  E y &\leq h,  &\\
 &      & A^l x+b^l &= y^l,&\quad& l\in \set{1,\ldots,q},\\
 &      &a^l \cdot x + b_0^l &=y_0^l,&\quad& l\in \set{1,\ldots,q},\\
 &&     \bra{y^l,y_0^l,t^l,v^l} &\in L^{d_l}_{\varepsilon},&\quad& l\in \set{1,\ldots,q}, \label{coneslP}\\
  &&     \bra{y^l,y_0^l,w^l} &\in \widehat{H}^{d_l}\bra{\Gamma^l},&\quad& l\in \set{1,\ldots,q}, \label{coneslPD}\\
 &&            \mathbf{l}_j \leq x_j &\leq \mathbf{u}_j,&\quad& j\in \set{1,\ldots,n}.
 \end{alignat}
\end{subequations}
where $\Gamma^l$ corresponds to the set of inequalities currently used by the algorithms. If $\Gamma^l=\emptyset$ for all $l\in \set{1,\ldots,q}$ constraints \eqref{coneslPD} do not restrict $\bra{y^l,y_0^l}$ in any way, so we assume that in such case \eqref{coneslPD} is simply omitted. Vectors  $\mathbf{l},\mathbf{u}\in \bra{\Real\cup\set{-\infty,\infty}}^n$ correspond to variable lower and upper bounds that are used to define a node in the branch-and-bound tree. These bounds are initially infinite ($\mathbf{l}_j=-\infty$ an $\mathbf{u}_j=\infty$ for all $j\in \set{1,\ldots,n}$) and are only modified for integer constrained variables $x_j$ for $j\in I$. The infinite bounds for the continuous variables are included to simplify notation. We denote the  relaxation defined in \eqref{lp} $\lp\bra{\mathbf{l},\mathbf{u},\set{\Gamma^l}_{l=1}^k}$. The algorithms also use the following nonlinear relaxation to sporadically compute node-bounds or as a heuristic to find feasible solutions. 
 \begin{subequations}\label{nlp}
\begin{alignat}{3}
  \opt_{\nlp\bra{\mathbf{l},\mathbf{u}}}:=&\max & \quad  cx  \\
 \notag&s.t.\\
 &      &  E y &\leq h,  &\\
 &      & A^l x+b^l &= y^l,&\quad &l\in \set{1,\ldots,q},\\
 &      &a^l \cdot x + b_0^l &=y_0^l,&\quad &l\in \set{1,\ldots,q},\\
 &&     \bra{y^l,y_0^l} &\in \mathbf{L}^{d_l},&\quad &l\in \set{1,\ldots,q},\\
  &&            \mathbf{l}_j \leq x_j &\leq \mathbf{u}_j,&\quad& j\in \set{1,\ldots,n}. \end{alignat}
\end{subequations}
We denote this relaxation $\nlp\bra{\mathbf{l},\mathbf{u}}$. 

The final ingredient in the algorithms is a list of branch-and-bound nodes to be processed, which we denote $\mathcal{M}$. The nodes in this list are characterized by variable upper and lower bounds $\bra{\mathbf{l},\mathbf{u}}$ and an estimated upper bound of the nonlinear relaxation $\nlp\bra{\mathbf{l},\mathbf{u}}$. The objective of the algorithm is to simulate a nonlinear branch-and-bound based on $\nlp\bra{\mathbf{l},\mathbf{u}}$, but by solving $\lp\bra{\mathbf{l},\mathbf{u},\set{\Gamma^l}_{l=1}^k}$ in each node and only sporadically solving $\nlp\bra{\mathbf{l},\mathbf{u}}$. A generic version of this procedure is described in Algorithm~\ref{LPNLPBB}.
 \begin{algorithm}[htpb]
   \DontPrintSemicolon
  Set global lower bound $\LB:=-\infty$.\;
  Set $\mathbf{l}^0_j:=-\infty$, $\mathbf{u}^0_j:=+\infty$ for all $j\in\{1,\ldots,n\}$.\;
  Set $\NB^0=+\infty$.\;
  Set node list $\mathcal{M}:=\{(\mathbf{l}^0,\mathbf{u}^0, \NB^0)\}$.\;
  Set initial approximation $\Gamma^l:=\Gamma_0^l$ for all $l\in \set{1,\ldots,q}$.\;
   \While{$\mathcal{M}\neq \emptyset$}{
     Select and remove a node $(\mathbf{l}^k,\mathbf{u}^k,\NB^k)\in\mathcal{M}$.\;  
     Solve $\lp\bra{\mathbf{l}^k,\mathbf{u}^k,\set{\Gamma^l}_{l=1}^q}$.\;
    \If{$\lp\bra{\mathbf{l}^k,\mathbf{u}^k,\set{\Gamma^l}_{l=1}^q}$ is feasible {\bf and} $\opt_{\lp\bra{\mathbf{l}^k,\mathbf{u}^k,\set{\Gamma^l}_{l=1}^q}} > \LB$ }{
      Let $\bra{\bar{x},\set{\bra{\bar{y}^l,\bar{y}_0^l,\bar{z}^l}}_{l=1}^q}$ be the optimal solution to $\lp\bra{\mathbf{l}^k,\mathbf{u}^k,\set{\Gamma^l}_{l=1}^q}$.\;
      \eIf{$\bar{x}_j\in\Int$ for all $j\in I$}{
        \eIf{$\bra{\bar{y}^l,\bar{y}_0^l}\in \mathbf{L}^{d_l}$ for all $l\in \set{1,\ldots,q}$}{
          Set $\LB:=\opt_{\lp\bra{\mathbf{l}^k,\mathbf{u}^k,\set{\Gamma^l}_{l=1}^q}}$.\;
          }(\tcc*[f]{Run Heuristic and Refine})
        {
        Let $\mathbf{l}_j=\mathbf{l}^k_j$, $\mathbf{u}_j=\mathbf{u}^k_j$  for all $j\in \{1,\ldots,n\}\setminus I$.\; 
        Let $\mathbf{l}_j=\mathbf{u}_j=\bar{x}_j$  for all $j\in  I$.\;
        Solve $\nlp(\mathbf{l},\mathbf{u})$.\;
        \If{$\nlp(\mathbf{l},\mathbf{u})$ is feasible {\bf and} $\opt_{\nlp(\mathbf{l},\mathbf{u})} > \LB$ }{
            $\LB:=\opt_{\nlp(\mathbf{l},\mathbf{u})}$.\;
        }
        Call $\proc\bra{\mathbf{l}^k,\mathbf{u}^k, \set{\bra{\bar{y}^l,\bar{y}_0^l,\bar{z}^l}}_{l=1}^q,\LB,\opt_{\lp(\mathbf{l}^k,\mathbf{u}^k)}, \mathcal{M},\set{\Gamma^l}_{l=1}^q}$.\;
        }
        }(\tcc*[f]{Branch on $\bar{x}$}){
        Pick $j_0$ in $\{j\in I\,:\, \bar{x}_j\notin\Int\}$.\;
        Let $\mathbf{l}_j=\mathbf{l}^k_j$, $\mathbf{u}_j=\mathbf{u}^k_j$  for all $j\in \{1,\ldots,n\}\setminus\{j_0\}$.\;  
        Let $\mathbf{u}_{j_0}=\lfloor \bar{x}_{j_0} \rfloor$, $\mathbf{l}_{j_0}=\lfloor \bar{x}_{j_0} \rfloor +1$.\;
        $\mathcal{M}:=\mathcal{M}\cup\set{ \bra{\mathbf{l}^k,\mathbf{u},\opt_{\lp(\mathbf{l}^k,\mathbf{u}^k)}} ,\bra{\mathbf{l},\mathbf{u}^k,\opt_{\lp(\mathbf{l}^k,\mathbf{u}^k)}}}$
        }
     }
     Remove every node $(\mathbf{l}^k,\mathbf{u}^k,\NB^k)\in\mathcal{M}$ such that $\NB^k\leq \LB$.\;  
    } 
    \caption{A Generic lifted LP-based Branch-and-Bound Algorithm.}\label{LPNLPBB}
\end{algorithm} 
The basis of Algorithm~\ref{LPNLPBB} is a branch-and-bound procedure that solves $\lp\bra{\mathbf{l},\mathbf{u},\set{\Gamma^l}_{l=1}^k}$ in each branch-and-bound node. If the optimal value of this relaxation is worse than that of the incumbent feasible solution the node is fathomed by bound in line 9. If the optimal value is better than the incumbent and the solution to the LP relaxation does not satisfy the integrality constraints of $\micp$, the algorithm branches on an integer constrained variable with a fractional value in the traditional way. This is done in lines 23--28 of the algorithm. If the optimal value is better than the incumbent and the solution of the LP relaxation satisfies the integrality constraints of $\micp$, the algorithm first checks if the nonlinear constraints of $\micp$ happen to be satisfied in line 12. If the nonlinear constraints are satisfied the incumbent solution is updated in line 13 (as stated the algorithm only updates the value of the solution; the modification to update the solution itself is straightforward). If the nonlinear constraints are not satisfied, the algorithm first attempts to find a feasible solution to $\micp$ that has the same values in the integer variables as the solution to $ \lp\bra{\mathbf{l},\mathbf{u},\set{\Gamma^l}_{l=1}^k}$ for the current node. This is done by solving $\nlp\bra{\mathbf{l},\mathbf{u}}$ for appropriately chosen bounds in lines 15--17. If this heuristic is successful and yields a better solution the incumbent is updated in lines 18--20. Finally, a generic refinement procedure for the current branch-and-bound node is called in line 21.

The original LiftedLP algorithm from \cite{A-Lifted-Linear-Programming} is obtained when we let $\Gamma_0^l=\emptyset$ for all $l\in \set{1,\ldots,q}$ and we use the branch-based refinement procedure described in Algorithm~\ref{branchbased}. 
 \begin{algorithm}[htpb]
   \DontPrintSemicolon
   \KwIn{Lower and upper variable bounds $\bra{\mathbf{l}^k,\mathbf{u}^k}$, solution $\set{\bra{\bar{y}^l,\bar{y}_0^l,\bar{w}^l}}_{l=1}^q$, lower bound $\LB$, node bound $y_{\lp(\mathbf{l}^k,\mathbf{u}^k)}$, node list $\mathcal{M}$ and cut list $\set{\Gamma^l}_{l=1}^q$.}
        Solve $\nlp(\mathbf{l}^k,\mathbf{u}^k)$.\;
        \If{$\nlp(\mathbf{l}^k,\mathbf{u}^k)$ is feasible {\bf and} $\opt_{\nlp(\mathbf{l}^k,\mathbf{u}^k)} > \LB$ }{
          Let $(\tilde{x}^k,\tilde{y}^k)$ be the optimal solution to $\nlp(\mathbf{l}^k,\mathbf{u}^k)$.\;
          \eIf(\tcc*[f]{Fathom by Integrality}){$\tilde{x}^k\in\Int^{n}$}{
            $\LB:=\opt_{\nlp(\mathbf{l}^k,\mathbf{u}^k)}$.\;
            }(\tcc*[f]{Branch on $\tilde{x}^k$}){
            Pick $j_0$ in $\{j\in I\,:\, \tilde{x}^k_j\notin\Int\}$.\;
            Let $\mathbf{l}_j=\mathbf{l}^k_j$, $\mathbf{u}_j=\mathbf{u}^k_j$  for all $j\in \{1,\ldots,n\}\setminus\{j_0\}$.\;  
            Let $\mathbf{u}_{j_0}=\lfloor \tilde{x}^k_{j_0} \rfloor$, $\mathbf{l}_{j_0}=\lfloor \tilde{x}^k_{j_0} \rfloor +1$.\;
            $\mathcal{M}:=\mathcal{M}\cup\set{\bra{\mathbf{l}^k,\mathbf{u},\opt_{\nlp(\mathbf{l}^k,\mathbf{u}^k)}},\bra{\mathbf{l},\mathbf{u}^k,\opt_{\nlp(\mathbf{l}^k,\mathbf{u}^k)}}}$
            }
          }
    \caption{Branch-based refinement,\white{-------------------------------------------------------------------------------} $\bproc\bra{\mathbf{l}^k,\mathbf{u}^k, \set{\bra{\bar{y}^l,\bar{y}_0^l,\bar{w}^l}}_{l=1}^q,\LB,\opt_{\lp(\mathbf{l}^k,\mathbf{u}^k)}, \mathcal{M},\set{\Gamma^l}_{l=1}^q}$}\label{branchbased}
\end{algorithm} 
The procedure begins by solving the nonlinear relaxation that would have been solved by an NLP-based branch-and-bound algorithm in the current node. The procedure then processes this node exactly as an NLP-based branch-and-bound algorithm would. In line 2 it first checks if the node can be fathomed by bound or infeasibility. If this fails, the procedure attempts to fathom by integrality in lines 4--6. Finally, if all the previous steps fail, the procedure branches on an integer constrained variable with a fractional value in lines 6--11. 

Finally, a cut-based version of the algorithm that truly uses $H^{d_l}\bra{\Gamma^l}$ is obtained when we use the cut-based refinement procedure described in Algorithm~\ref{cutrefinement}. 
\begin{algorithm}[htpb]
 \DontPrintSemicolon
  \KwIn{Lower and upper variable bounds $\bra{\mathbf{l}^k,\mathbf{u}^k}$, solution $\set{\bra{\bar{y}^l,\bar{y}_0^l,\bar{w}^l}}_{l=1}^q$, lower bound $\LB$, node bound $y_{\lp(\mathbf{l}^k,\mathbf{u}^k)}$, node list $\mathcal{M}$ and cut list $\set{\Gamma^l}_{l=1}^q$.}
  \For{$l= 1$ \KwTo $q$}{
  \If{$\bra{\bar{y}^l,\bar{y}_0^l}\notin \mathbf{L}^{d_l}$}{
    \For{$j= 1$ \KwTo $d_l$}{
  \If{$\bra{\bar{y}_j^l}^2  > \bar{w}_j^l \bar{y}_0^l$}{
    Set $\Gamma_j^l:=\Gamma_j^l\cup\set{\bar{y}_j^l/\bar{y}_0^l}$\;
  }
  }
  }
  
  }
  $\mathcal{M}:=\mathcal{M}\cup\set{\bra{\mathbf{l}^k,\mathbf{u}^k,\opt_{\lp(\mathbf{l}^k,\mathbf{u}^k)}}}$
    \caption{Cut-based refinement, $\cproc\bra{\mathbf{l}^k,\mathbf{u}^k, \set{\bra{\bar{y}^l,\bar{y}_0^l,\bar{w}^l}}_{l=1}^q,\LB,\opt_{\lp(\mathbf{l}^k,\mathbf{u}^k)}, \mathcal{M},\set{\Gamma^l}_{l=1}^q}$}\label{cutrefinement}
\end{algorithm} 
This procedure first checks for a violation of the nonlinear constraints by the current node's solution in lines 1--2. If a violation is found, then the separation procedure for $H^{d_l}\bra{\Gamma^l}$ is called in line 3. This separation procedure updates $\Gamma^l$ with one or more inequality violated by the node solution.

\section{Computational Experiments}\label{compres}

In this section we compare the  performance of the different algorithms on the portfolio optimization instances considered in \cite{A-Lifted-Linear-Programming}. We begin by describing the portfolio optimization instances and  how the different algorithms are implemented. We then present the results of the computational experiments. 

\subsection{Instances}\label{instances}

The instances from \cite{A-Lifted-Linear-Programming} correspond to three classes of portfolio optimization instances with limited diversification or cardinality constraints  \cite{bertsimas2009algorithm,Bienstock94,ChangMeade2000,MaringerKellerer2003} which are formulated as MICQPs. 
All three problems  construct a portfolio out of $n$ assets with an expected return  $ \bar{a}\in \Real^n$. The objective is to maximize the return of the portfolio subject to various risk constraints and limitation on the number of assets considered in the portfolio. To simplify the description of the three versions we deviate from the conventions of \eqref{micp} and  use different names for continuous and integer constrained variables. With this the first class of problems we consider are of the form 
 \begin{subequations}\label{markowitzprob}
 \begin{alignat}{3}
   \label{markbegin}  &\max & \quad{\bar{a}}x\\
 \notag&s.t.\\
  \label{variance}&      & \norm{Q^{1/2} x}_2 &\leq \sigma,  &
\\
\label{convex}&      & \sum_{j=1}^n x_j &= 1,  &
\\
\label{knpmark1}& & x_j & \leq z_j, &\forall j\in\{1,\ldots,n\}, \\
&      & \sum_{j=1}^n z_j &\leq K,  &
\\
\label{markenddd}&      &z &\in \{0,1\}^{n},\\
\label{markend}&      &x &\in \Real_+^{n},
 \end{alignat}
  \end{subequations}
 where  $x$ indicates the fraction of the portfolio invested in each asset,  $Q^{1/2}$ is the positive semidefinite square root of the covariance matrix of the returns of the stocks, $\sigma$ is the maximum allowed risk and $K<n$ is the maximum number of assets that can be included  in the portfolio. We refer to this class of instances as the \emph{classical} instances. 

The second class of problems  is obtained by replacing constraint \eqref{variance} with a shortfall constraint considered in   \cite{LoboFazelBoyd2005,LoboVandenbergheBoyd1998}. This constraint can be formulated as 
  \begin{equation*}
    \Phi^{-1}(\eta_i)\norm{Q^{1/2} y }_2 \leq {\bar{a}} y  - W_i^{low}, \quad  \quad \forall i\in\{1,2\}
  \end{equation*}
  where $W_i^{low}$ and $\eta_i$ are given parameters and  $\Phi(\cdot)$ is the cumulative distribution function of the Normal distribution with zero mean and  unit standard deviation. We refer to this class of instances as the \emph{shortfall} instances.

The third and final class of problems correspond to a robust version of \eqref{markowitzprob} introduced in \cite{CeriaStubbs2006}. This model introduces an additional continuous variable $t$, replaces the objective function  \eqref{markbegin} with  $\max  t$ and adds the constraint $ {\bar{a}} x- \alpha \norm{R^{1/2} y }_2  \geq  t$ where $R^{1/2}$ is the positive semidefinite square root of a given matrix $R$ and $\alpha $ is a given scalar parameter. We refer to this class of instances as the \emph{robust} instances.

We note that only the classical instances can be handled  by the lifted polyhedral relaxation considered in  \cite{hijazi2013outer}.

\subsection{Implementation and Computational Settings}

All algorithms and models were implemented using the JuMP modeling language \cite{jump,lubin2013computing} and solved with CPLEX v12.6 \cite{cplex} and Gurobi v5.6.3 \cite{gurobi}. The complete code for this implementation is available at \url{https://github.com/juan-pablo-vielma/extended-MIQCP}. 

Our base benchmark algorithms are CPLEX and Gurobi's standard algorithms for solving $\micp$. Both solvers implement an NLP-based branch-and-bound algorithm and a standard LP-based branch-and-bound algorithm. Each of these implementations include advanced features such as cutting planes, heuristics, preprocessing and elaborate branching and node selection strategies. We refer to the NLP-based algorithms as CPLEXCP and GurobiCP, and to the LP-based   algorithms as CPLEXLP and GurobiLP. All four algorithms can be used by simply giving the model to the appropriate solver and setting a specialized parameter value. 

We also implemented a branch-based and a cut-based version of Algorithm~\ref{LPNLPBB}. The branch-based version corresponds to the LiftedLP algorithm from  \cite{A-Lifted-Linear-Programming} and its implementation requires access to a branch callback, which is not provided by Gurobi. For this reason we only implemented a CPLEX version similar to the original implementation from \cite{A-Lifted-Linear-Programming}. This version was developed using the branch, heuristic and incumbent callbacks for CPLEX provided by the CPLEX.jl library \cite{cplexjl}. We refer to this algorithm as LiftedLP. Implementing the  cut-based versions of Algorithm~\ref{LPNLPBB} only requires access to a \emph{lazy constraint} callback and to a heuristic callback. These callbacks can be accessed for CPLEX and Gurobi through the solver independent callback interface  provided by JuMP, which allowed us to implement a version of this algorithm that is not tied to either solver. We implemented the cut-based LiftedLP algorithm for all three lifted polyhedral relaxations considered in Section~\ref{nonlinearRelax}. However, as noted in Section~\ref{liftedLPimplementation}, preliminary experiments showed that the version based on $\widehat{H}^d\bra{\Gamma}$ provides comparable or better performance than the other two versions. For this reason we here only present results for that version, which corresponds to Algorithm~\ref{LPNLPBB} using the cut-based refinement described in Algorithm~\ref{cutrefinement}. We refer to the implementation of this algorithm using CPLEX and Gurobi as base solvers as CPLEXSepLazy and GurobiSepLazy respectively. 

Finally, instead of implementing LP-based algorithms that only use a dynamic lifted polyhedral relaxation (i.e. that does not use $L_\varepsilon^d$ or $L_{s(\varepsilon)}^d$), we simply solve the three lifted reformulations described in Section~\ref{nonlinearRelax} with CPLEX and Gurobi's LP-based algorithms. We refer to the implementations based on reformulation \eqref{towerreform} as CPLEXTowerLP and GurobiTowerLP, to the implementations based on reformulation \eqref{sepreform} as CPLEXSepLP and GurobiSepLP and to the implementations based on reformulation \eqref{towersepreform} as CPLEXTowerSepLP and GurobiTowerSepLP.

\subsection{Results}

All computational results in this section are from tests on a Intel i7-3770 3.40GHz Linux workstation with 32GB of RAM. All algorithms were limited to a single thread by appropriately setting CPLEX and Gurobi parameters and to a total run time of 3600 seconds. We consider the same portfolio optimization instances from \cite{A-Lifted-Linear-Programming}, which correspond to the three classes of problems described in Section~\ref{instances} for $K=10$ and $n\in \set{10,20,30,40,50,60,100,200,300}$. For each problem class and choice of $n$ we test $100$ randomly generated instances. We refer the reader to \cite{A-Lifted-Linear-Programming} for more details on how the instances were generated. All instances and results are available at \url{https://github.com/juan-pablo-vielma/extended-MIQCP}. Results are presented in two types of charts. The first type are box-and-whisker charts generated by the BoxWhiskerChart function in Mathematica v10 \cite{mathematica}. These charts consider solve times in seconds in a logarithmic scale and show the median solve times, 25\% and 75\% quantiles of the solve times and minimum and maximum solve times excluding outliers, which are shown as dots. We note that to ensure the graphs are easily legible in black and white print we use the same colors for each chart. Hence, a given algorithm may be assigned different colors in different charts. The second type are  performance profiles introduced by Dolan and Mor\'{e} \cite{DolanMore2002} with solve time as a performance metric.  Finally, tables with summary statistics for all methods and instances are included in the Appendix~\ref{apendixA}.

\subsection{Comparison with Traditional Algorithms and Initial Calibration}

In this section we present some initial results that evaluate the difficulty of the considered instances, compare the lifted algorithms to standard algorithms and compare the dynamic lifted relaxations. Because the Classical and Shortfall instances are extremely difficult for many algorithms tested  in this section we only consider $n\in \set{10,20,30,40,50,60}$ for these instances. Similarly we exclude $n=300$ for the Robust instances.

The first set of results are presented through  box-and-whisker charts in Figure~\ref{LPVSNLP}. These charts compare the performance of CPLEX and Gurobi's NLP and LP-based algorithms (CPLEXCP, GurobiCP, CPLEXLP and GurobiLP). 
\begin{figure}[htb]
\centering 
\subfigure[Classical.]{\includegraphics[scale=.85]{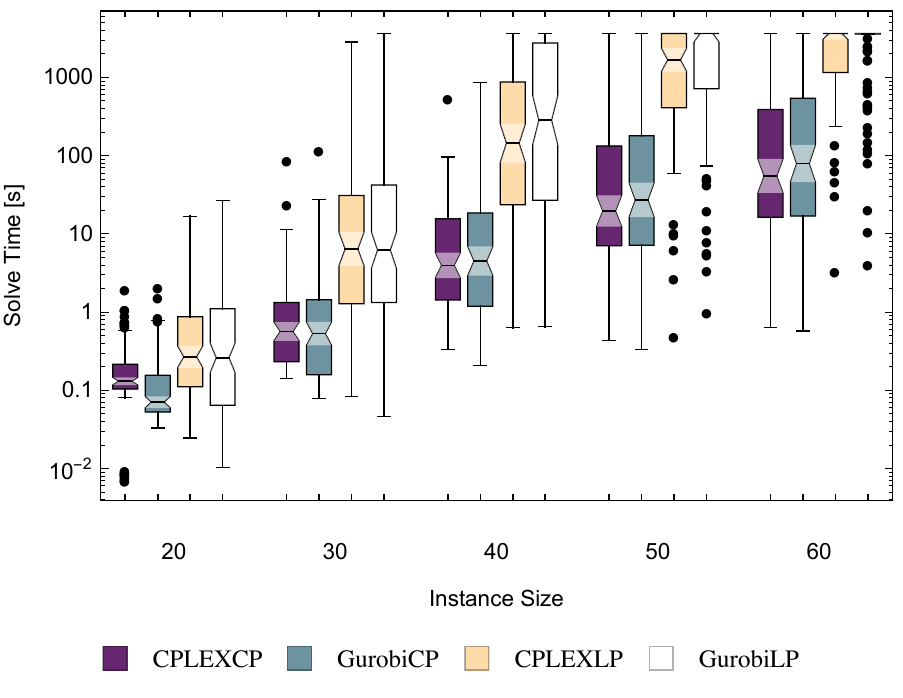}}
\subfigure[Shortfall.]{\includegraphics[scale=.85]{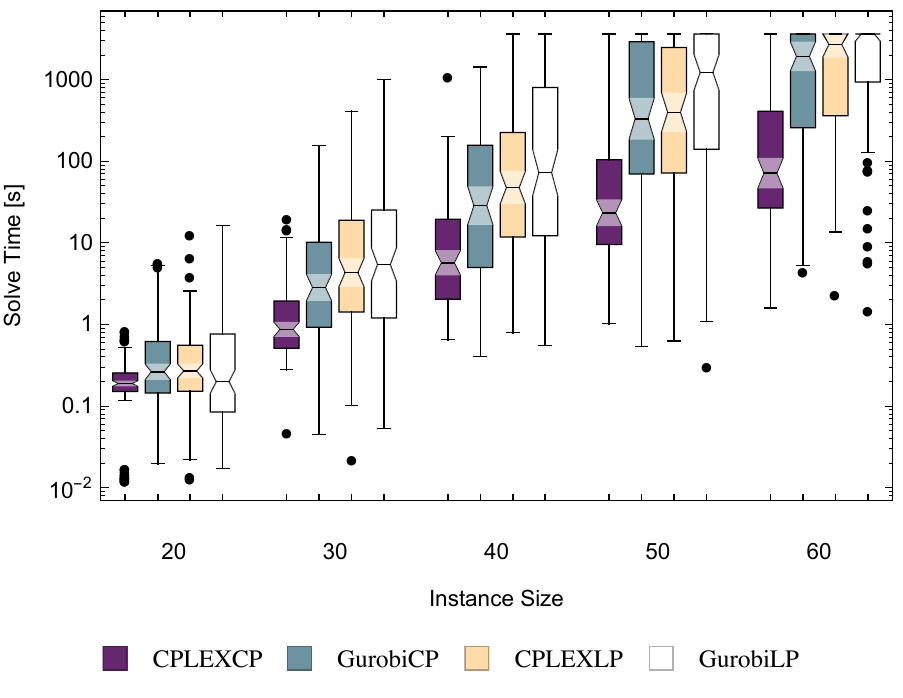}}
\subfigure[Robust.]{\includegraphics[scale=.85]{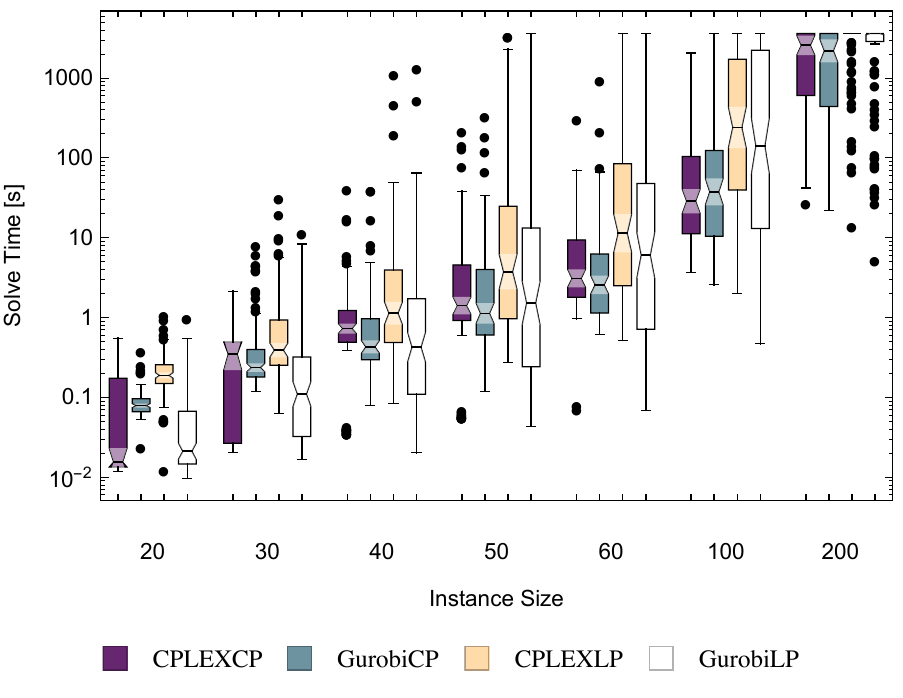}}
\caption{Solution times for standard  LP-based and NLP-based algorithms [s].}\label{LPVSNLP}
\end{figure}
The results in \cite{A-Lifted-Linear-Programming} showed that NLP-based algorithms had a significant advantage over traditional LP-based algorithms for the portfolio optimization instances. Figure~\ref{LPVSNLP} shows that, for sufficiently large values of $n$, this advantage still holds for current versions of CPLEX and Gurobi.

The second set of results are also presented through box-and-whisker charts in Figure~\ref{LIFTEDLPVSNLP}. These charts compare the performance of the branch-based LiftedLP algorithm (LiftedLP) and the two NLP-based algorithms (CPLEXCP and GurobiCP).
\begin{figure}[htb]
\centering 
\subfigure[Classical.]{\includegraphics[scale=.85]{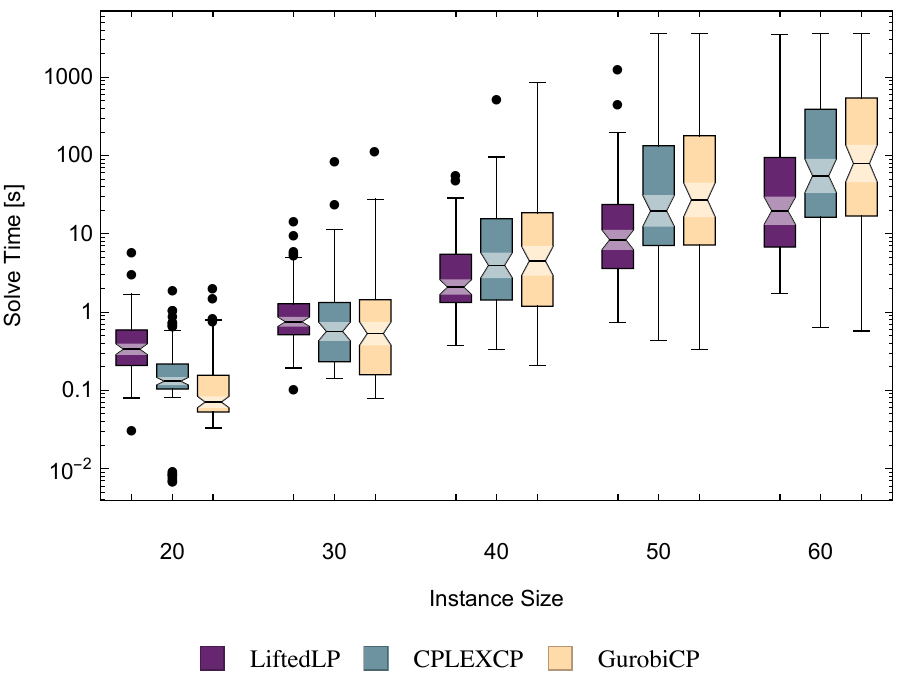}}
\subfigure[Shortfall.]{\includegraphics[scale=.85]{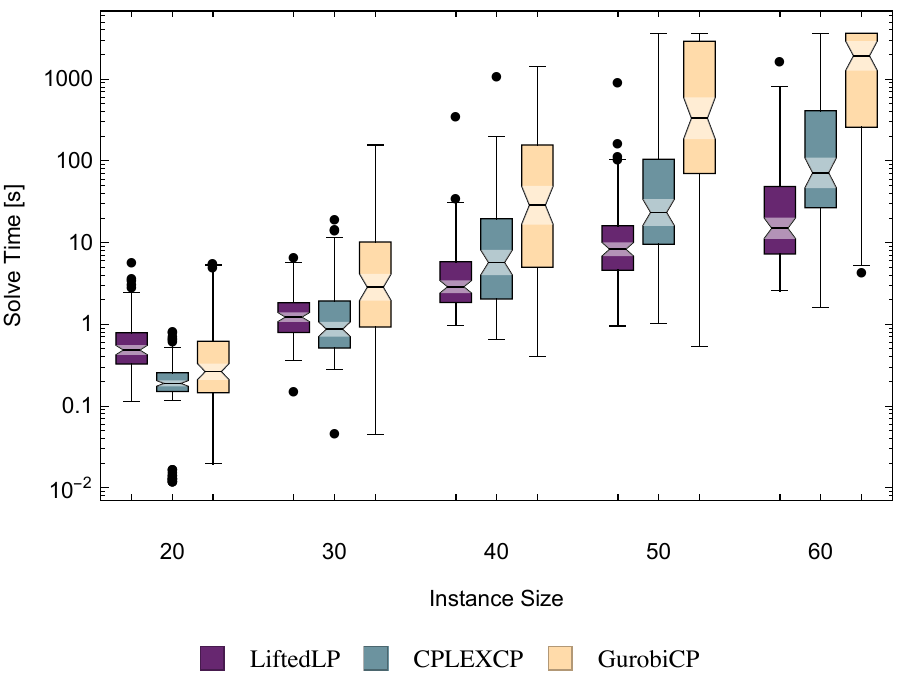}}
\subfigure[Robust.]{\includegraphics[scale=.85]{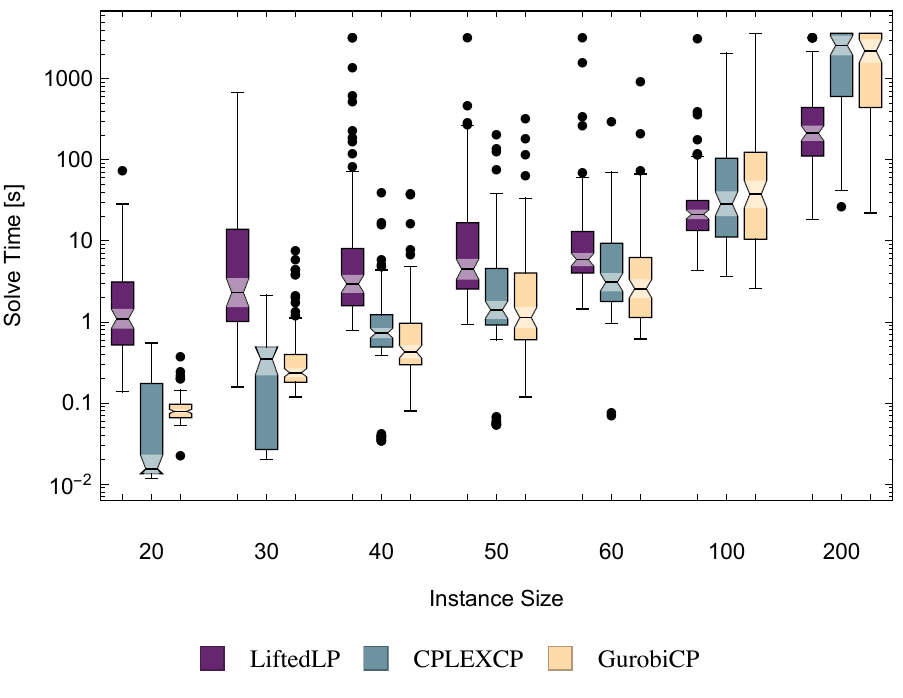}}
\caption{Solution times for branch-based LiftedLP and NLP-based algorithms [s].}\label{LIFTEDLPVSNLP}
\end{figure}
Figure~\ref{LIFTEDLPVSNLP} confirms that,  for sufficiently large values of $n$,  LiftedLP still provides an advantage over CPLEXCP and GurobiCP for the considered instances.

The final set of results in this section compares the performance of the three dynamic lifted polyhedral relaxations in their nonlinear reformulation versions \redd{described in Section~\ref{nonlinearRelax}} (CPLEXSepLP, GurobiSepLP, CPLEXTowerLP, GurobiTowerLP, CPLEXTowerSepLP, GurobiTowerSepLP). This time the results are presented through a performance profile in Figure~\ref{perprofformulation}.
\begin{figure}[htb]
\centering 
\includegraphics[scale=.85]{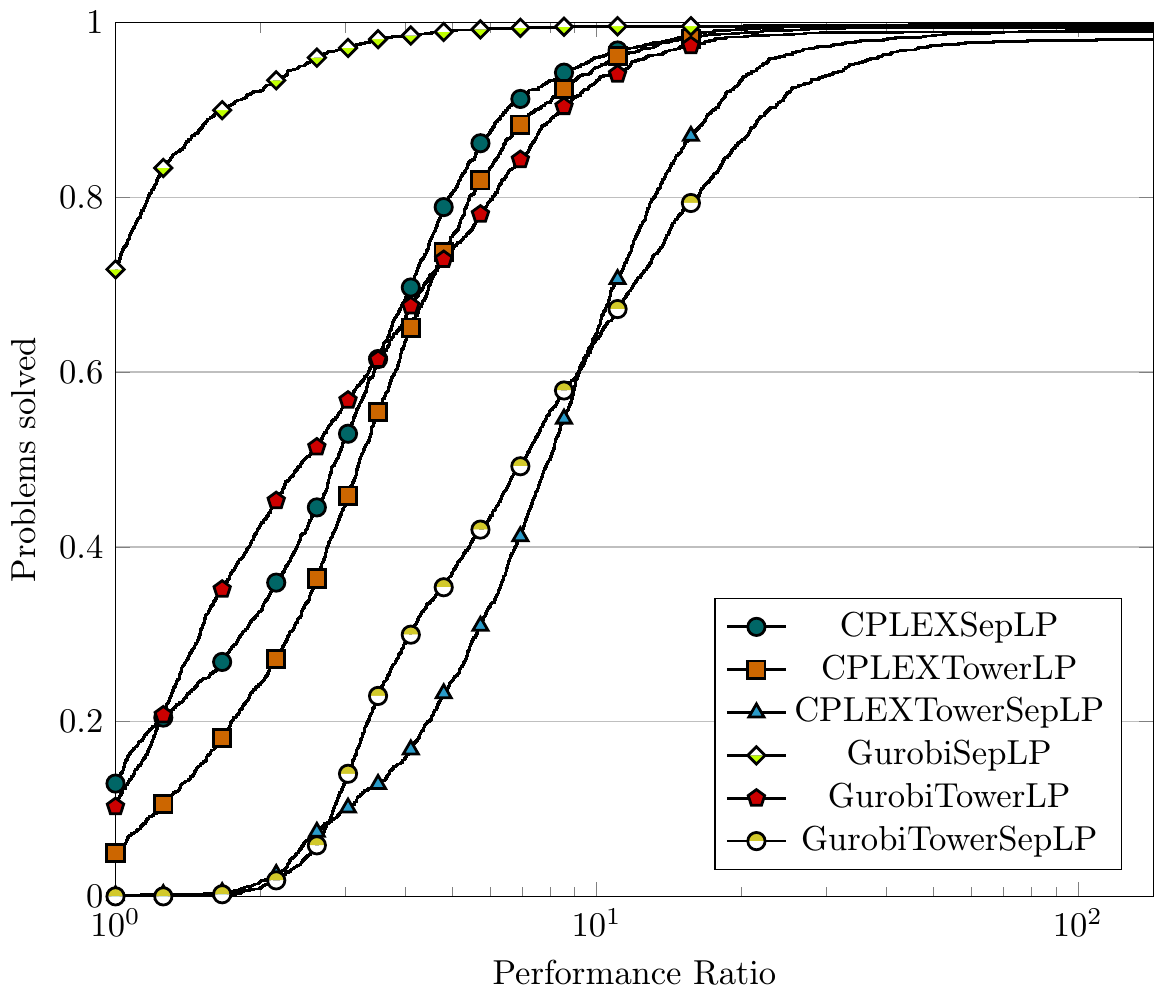}
\caption{Performance profiles of solution times for dynamic lifted polyhedral relaxations solved by standard LP-based algorithms.}\label{perprofformulation}
\end{figure}
Figure~\ref{perprofformulation} shows that for a fixed solver (CPLEX or Gurobi), the separable relaxation outperforms the other relaxation (i.e. CPLEXSepLP outperforms CPLEXTowerLP and CPLEXTowerSepLP, and GurobiSepLP outperforms GurobiTowerLP and GurobiTowerSepLP). Furthermore, CPLEXSepLP and GurobiSepLP have comparable or better performance than all other combinations of solver and relaxation. These results align with our preliminary experiments which showed that the separable relaxation performed best among the cut-based LiftedLP algorithms considered in Section~\ref{liftedLPimplementation}. Because the separable relaxation is additionally the simplest, from now on we concentrate on this relaxation among the three dynamic ones. Box-and-whisker charts with more details concerning this experiment are presented in  Figure~\ref{dynanmicliftedform} in Appendix~\ref{appendixgraphs}. \red{For instance, Figure~\ref{dynanmicliftedform} shows that the pure tower relaxation (CPLEXTowerLP and GurobiTowerLP) can provide an advantage over the separable relaxations (CPLEXSepLP and GurobiSepLP) for the smallest instances, but the combined tower-separable relaxation (CPLEXTowerSepLP and GurobiTowerSepLP) is consistently outperformed by the other two. Additional details are also included in summary tables in Appendices~\ref{apendixA} and \ref{qualityappendix}. The tables in Appendix~\ref{apendixA} present summary statistics for solve times by class of instance and size, which can be used for even more detailed comparisons. For example, the tables confirm the potential advantage of the tower relaxation for the smallest instances by showing that GurobiTowerLP is the fastest option in 54 out of the 100 shortfall instances for $n=20$. The tables in Appendix~\ref{qualityappendix} present summary statistics for the feasibility quality of the solutions obtained. These statistics show that using the separable reformulation resulted in a significant reduction on the feasibility quality of the solutions obtained, particularly when using Gurobi. However, this reduction in quality is not surprising as errors in the $3$-dimensional rotated conic constraints \eqref{rotatedcone} can easily add up to a larger error in the original conic constraint. Fortunately, this can be easily resolved by increasing the precision for constraints \eqref{rotatedcone} or by simply correcting the final or intermediate incumbent solutions using the original conic constraint \redd{(i.e. by solving the original conic quadratic relaxation with the integer variables fixed appropriately).} 
 }

\subsection{Comparison between Lifted LP-based Algorithms}

In this section we compare the performance of the lifted algorithms. Classical and Shortfall instances are still extremely difficult for many of these algorithms so we exclude results for $n\in\set{200,300}$ for these instances. We begin by comparing the branch-based (LiftedLP) and cut-based LiftedLP algorithms (CPLEXSepLazy and GurobiSepLazy) in Figure~\ref{branchvslazy}.
\begin{figure}[htb]
\centering 
\subfigure[Classical.]{\includegraphics[scale=.85]{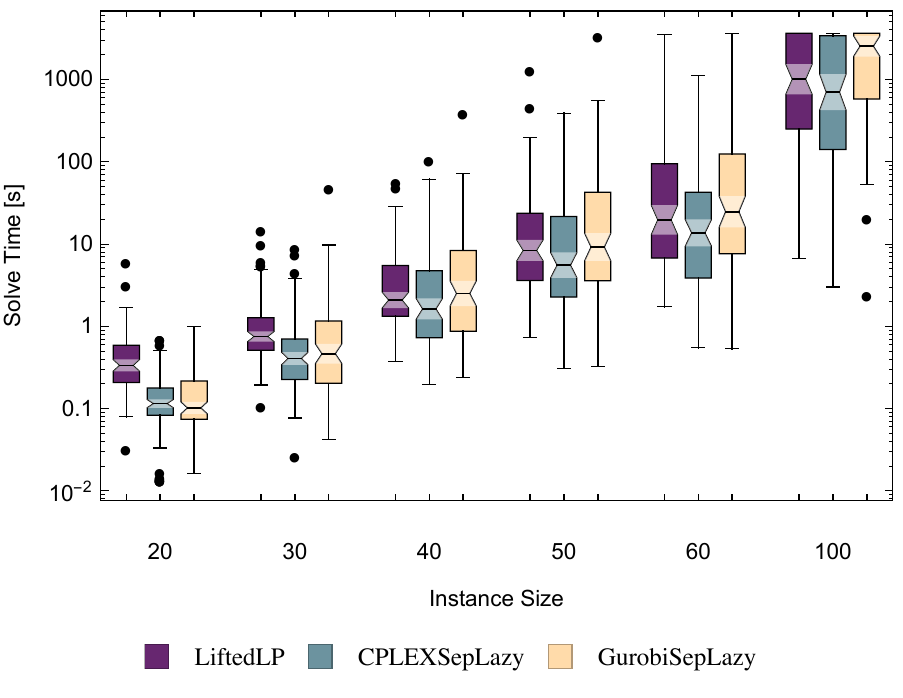}}
\subfigure[Shortfall.]{\includegraphics[scale=.85]{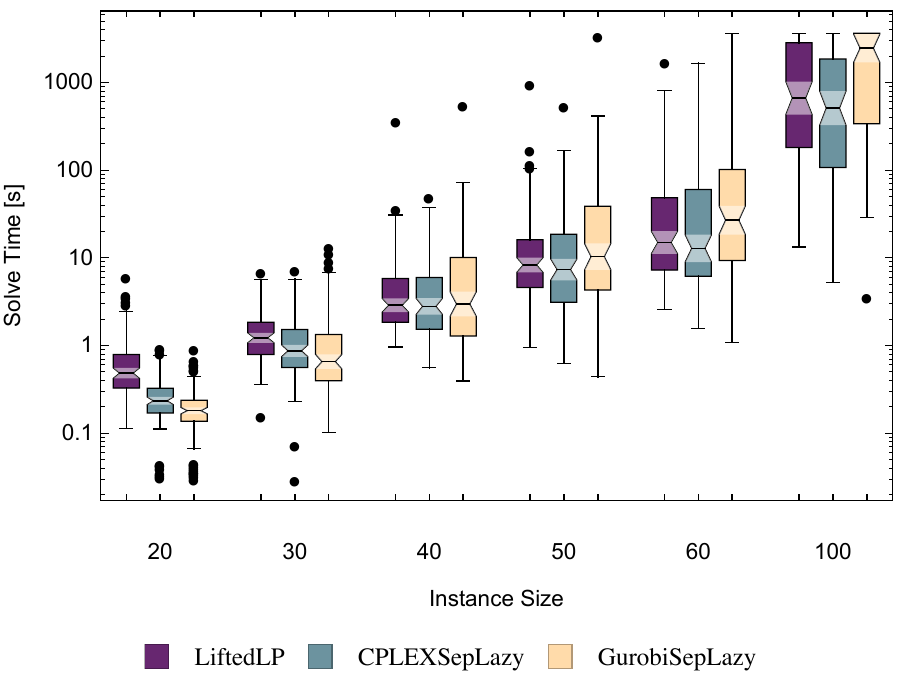}}
\subfigure[Robust.]{\includegraphics[scale=.85]{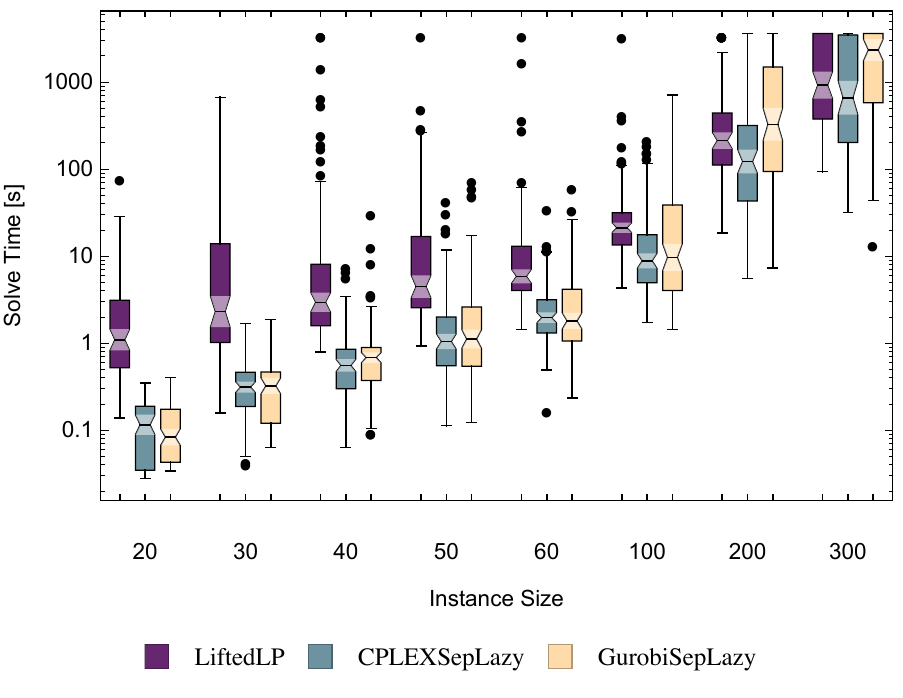}}
\caption{Solution times branch-based and cut-based  LiftedLP algorithms [s].}\label{branchvslazy}
\end{figure}
The results show that all three methods have comparable overall performances. The cut-based algorithms do sometimes provide a computational advantage, particularly for the smaller instances. Furthermore, cut-based algorithms also have the practical advantage of being easily implemented for both CPLEX and Gurobi through JuMP.

Our final set of experiments compare the branch-based and cut-based LiftedLP algorithms (LiftedLP, CPLEXSepLazy and GurobiSepLazy) the separable dynamic lifted polyhedral relaxation in its nonlinear reformulation version (CPLEXSepLP and GurobiSepLP). The results are presented through a performance profile in Figure~\ref{perprof}, \red{which also includes the four traditional algorithms (CPLEXCP, GurobiCP, CPLEXLP and GurobiLP) as a reference}. 
\begin{figure}[htb]
\centering 
\includegraphics[scale=.85]{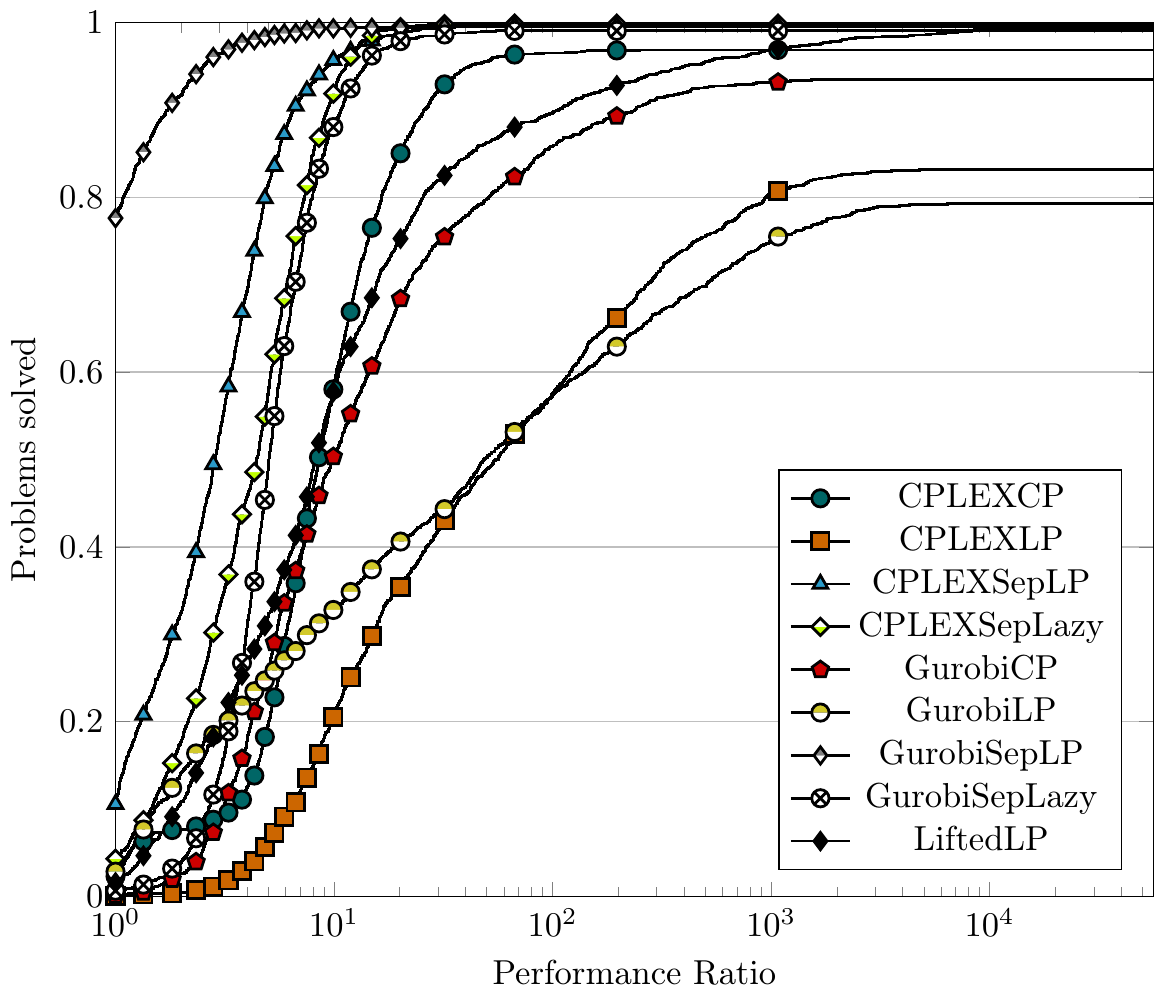}
\caption{Solution times for static and dynamic lifted LP-based, standard LP-based and standard NLP-based algorithms [s].}\label{perprof}
\end{figure}
Figure~\ref{perprof} confirms that the traditional LP-based algorithms have the worst performance and are rather consistently outperformed by the NLP-based algorithms. In addition, the branch-based LiftedLP algorithm outperforms the traditional algorithms with the exception of CPLEX's non-linear based algorithm (CPLEXCP). Furthermore, the additional advantage provided by the cut-based LiftedLP algorithms allows them to  consistently outperform all traditional algorithms.  However, the best performance is achieved by the separable dynamic lifted polyhedral relaxations in their nonlinear reformulation versions (CPLEXSepLP and GurobiSepLP). \red{Again, box-and-whisker charts and summary statistics tables with more details concerning solve times and solution quality are included in  Appendix~\ref{resultappendix}. One notable insight from these tables is that  the cut-based LiftedLP algorithms can be competitive for the hardest instances. For instance, CPLEXSepLazy was the fastest option in 37 out of the 100 shortfall instances for $n=100$.}

The JuMP implementation of the cut-based LiftedLP algorithms results on an extremely flexible framework that can significantly outperform traditional algorithms. On the other hand, the separable reformulation provides a consistently better performance without the need for callbacks or any additional programming beyond a simple transformation of the conic constraints. A possible explanation of this performance may be that callbacks interfere with (or even result in the deactivation of) advanced features of the MICQP solvers (e.g. CPLEX turns off the dynamic search feature when control callbacks such as heuristic, lazy constraint and branch callbacks are used \cite{cplexmanual}). Hence, it is possible that an internal implementation of the LiftedLP algorithms may provide an advantage in some classes of problems. \redd{Furthermore, from a purely theoretical standpoint (i.e. disregarding the mentioned algorithmic details and implementation issues), the only difference between the cut-based LiftedLP algorithms and the separable reformulation algorithms is the inclusion of the Ben-Tal and Nemirovski lifted polyhedral relaxation $L^d_{\varepsilon}$ by the first class of algorithms. This suggests that including $L^d_{\varepsilon}$ as an initial polyhedral relaxation can sometimes be useful for large instances.}

\section{Possible Extensions and Open Questions}\label{futurework}

\red{The computational results show that the separable reformulations provide a clear advantage over the original LiftedLP algorithm and standard LP-based and NLP-based algorithms for all three variants of the portfolio optimization problem. However, a further strengthening of the reformulation may be possible for the classical instances. In addition, some theoretical questions remain about the approximation quality provided by the reformulation.    }
 
\subsection{Formulation Strengthening for Portfolio Optimization Through Perspective Reformulations}

The relation between the separable reformulation and perspective formulations of unions of convex sets described in Section~\ref{prespectivesubsection} can also be used to strengthen the separable reformulation for the classical portfolio optimization instances \eqref{markowitzprob} for $Q^{1/2}=I$. If we rewrite \eqref{variance} using separable reformulation $\widehat{\mathbf{H}}^d$ defined in \eqref{separablequadratic} we obtain 
\begin{subequations} 
 \begin{alignat}{3}
  &      & x_0 &\leq \sigma,  &\label{replace1}
\\
&&x_j^2  &\leq w_j x_0,&\quad \forall j\in \set{1,\ldots,n},\label{replace2}\\
&&\sum_{j=1}^d w_j &\leq x_0,\\
&      & \sum_{j=1}^n x_j &= 1,  &
\\
& & x_j & \leq z_j, &\forall j\in\{1,\ldots,n\}, \\
&      & \sum_{j=1}^n z_j &\leq K,  &
\\
&      &z &\in \{0,1\}^{n},\\
&      &x &\in \Real_+^{n}.
 \end{alignat}
  \end{subequations}
However, using known results (e.g. \cite{Stubbs1996} and Section 3.4 of \cite{perspsurvey}), we may replace \eqref{replace1}--\eqref{replace2} with 
\begin{subequations}
 \begin{alignat}{3}
  &      & x_0 &\leq \sigma^2,  &
\\
&&x_j^2  &\leq w_j z_j,&\quad \forall j\in \set{1,\ldots,n}
 \end{alignat}
  \end{subequations}
to obtain a stronger formulation. Having $Q^{1/2}=I$ is essential for this improvement, but extending it to general $Q^{1/2}$ may be possible by using other known techniques \cite{DBLP:conf/ipco/DongL13,frangioni2007sdp,Hyemin2015}.
\red{
\subsection{Approximation Quality of Dynamic Lifted Polyhedral Relaxations}

It is well known that constructing a non-lifted (i.e. $m_2=0$ in Definition~\ref{approximationdef}) polyhedral relaxation of $\mathbf{L}^d$ with approximation quality $\varepsilon$ in \eqref{appprox}, requires at least $\exp\bra{\frac{d}{{2\bra{1+\varepsilon}^2}}}$ linear inequalities \cite{ball97}. Hence, as discussed just before Proposition~\ref{separableprop}, the approximation quality of $\widehat{L}^d_{s\bra{\varepsilon}}$ and $L^d_{\varepsilon}$ given by Proposition~\ref{nemiprop} and Corollary~\ref{glineurcoro} seems strongly dependent on the fact that the projection of  $\widehat{N}^2_s$ onto the variables $y$ has an exponential in $s$ number of inequalities. This suggests that neither the tower of variables nor the separable polyhedral relaxations can achieve the approximation efficiency of  $\widehat{L}^d_{s\bra{\varepsilon}}/L^d_{\varepsilon}$ with regard to number of linear inequalities. However it would still be interesting to understand what level of efficiency these approximations can achieve. We formalize this in the following open questions. 

\begin{question}[Smallest tower of variables polyhedral relaxation]
Let \[\Omega:=\set{\Omega_{i,k}\,:\, i\in\{1,\ldots,\lfloor t_k/2 \rfloor\},\quad k\in\{0,\ldots,K-1\}}\] be such that $\Omega_{i,k}\subseteq \mathbf{S}^1$
 and $m\bra{\Omega}:=\sum_{k=0}^{K-1} \sum_{i=1}^{\lfloor t_k/2 \rfloor} \abs{\Omega_{i,k}}$. 

 For a given $\varepsilon>0$, what is the smallest $m\bra{\Omega}$ such that $\widehat{T}^d\bra{\Omega}$ is a polyhedral relaxation of $\mathbf{L}^d$ with approximation quality $\varepsilon$?
\end{question}

\begin{question}[Smallest separable polyhedral relaxation]
Let $\Gamma:=\set{\Gamma_j\,:\, j\in \set{1,\ldots,n}}$
 and $m\bra{\Gamma}:=\sum_{j=1}^n \abs{\Gamma_j}$. 

 For a given $\varepsilon>0$, what is the smallest $m\bra{\Gamma}$ such that $\widehat{H}^d\bra{\Gamma}$ is a polyhedral relaxation of $\mathbf{L}^d$ with approximation quality $\varepsilon$?
\end{question}

Finally, \cite{ben2001lectures} shows that $\widehat{L}^d_{s\bra{\varepsilon}}/L^d_{\varepsilon}$ are essentially the smallest possible (static or dynamic) polyhedral relaxations of $\mathbf{L}^d$ with an approximation quality $\varepsilon$. Then some natural follow-up questions are: what is the smallest possible size of a dynamic polyhedral relaxation of $\mathbf{L}^d$, and how close are the tower of variables and separable approximations to this lower bound.
}

\begin{acknowledgements}
We thank the review team for their thoughtful and constructive comments, which  improved the exposition of the paper. 
Support for Juan Pablo Vielma was provided by the National Science Foundation under grant CMMI-1351619, support for Miles Lubin was provided by the DOE Computational Science Graduate Fellowship under grant DE-FG02-97ER25308 and support for Joey Huchette was provided by the National Science Foundation Graduate Research Fellowship under grant 1122374.
\end{acknowledgements}

\bibliographystyle{plain}
\bibliography{references}
\appendix
\section{Proof of Proposition~\ref{nosepprop}}\label{proofappendix}

To prove Proposition~\ref{nosepprop} we begin with the following lemma, which gives a characterization of the homogenization of a convex set described by nonlinear constraints.
\begin{lemma}\label{lemma2}
Let $f:\Real^d\to\Real$ be a closed convex function such that $\lim_{\norm{x}_2\to\infty} \frac{f(x)}{\norm{x}_2}=\infty$ so that the closure of the perspective function of $f$ is given by 
\[ \bra{cl\tilde{f}}(t,x)=\begin{cases}t f(x/t) &t>0\\ 0 & x=0 \text{ and } t=0\\\infty &\text{o.w.}\end{cases}.\]
If $\mathbf{C}:=\set{y\in \Real^d\,:\, f(y)\leq 1}$, then 
\begin{equation}
\cone\bra{\set{1}\times \mathbf{C}}=\set{\bra{y_0,y}\in \Real^{d+1}\,:\, \bra{cl\tilde{f}}(y_0,y)\leq y_0}.
\end{equation} 
\end{lemma}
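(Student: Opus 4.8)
The plan is to prove the set equality by a case analysis on the sign of $y_0$, after first using the growth hypothesis to pin down the geometry of $\mathbf{C}$. I would begin by observing that $\mathbf{C}=\set{y\in\Real^d\,:\,f(y)\leq 1}$ is compact. It is closed because $f$ is a closed (lower semicontinuous) function and $\mathbf{C}$ is one of its sublevel sets; it is bounded because of the superlinear growth condition, since a sequence $y_n\in\mathbf{C}$ with $\norm{y_n}_2\to\infty$ would force $f(y_n)/\norm{y_n}_2\to\infty$, contradicting $f(y_n)\leq 1$. This compactness is the key structural fact, and I would record the standard consequence that the conic hull of a compact base is closed: if $(y_0^n,y^n)=\lambda_n\bra{1,u_n}$ with $\lambda_n\geq 0$, $u_n\in\mathbf{C}$, converges to $(y_0^*,y^*)$, then either $y_0^*>0$ and $u_n\to y^*/y_0^*\in\mathbf{C}$ by closedness, or $y_0^*=0$ and boundedness forces $y^*=\lim \lambda_n u_n=0$. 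Hence $\cone\bra{\set{1}\times\mathbf{C}}$ is closed, matching the right-hand side, which is a sublevel set of the closed function $(y_0,y)\mapsto\bra{cl\tilde f}(y_0,y)-y_0$.

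Next I would dispatch the sign cases. For $y_0<0$ both sides are empty: every element of $\cone\bra{\set{1}\times\mathbf{C}}$ has nonnegative first coordinate, and $\bra{cl\tilde f}(y_0,y)=\infty$ whenever $y_0<0$, so the defining inequality fails. For $y_0>0$, a point lies in $\cone\bra{\set{1}\times\mathbf{C}}$ iff it equals $y_0\bra{1,y/y_0}$ with $y/y_0\in\mathbf{C}$, i.e. iff $f(y/y_0)\leq 1$; using the $t>0$ branch of the perspective and dividing $\bra{cl\tilde f}(y_0,y)=y_0 f(y/y_0)\leq y_0$ by $y_0$ shows the right-hand side imposes exactly the same condition, so the two descriptions agree on $\set{y_0>0}$.

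The only delicate case is $y_0=0$, and this is where the closure in the perspective and the compactness of $\mathbf{C}$ do the work. On the right-hand side the inequality $\bra{cl\tilde f}(0,y)\leq 0$ holds, by the stated formula for $cl\tilde f$, precisely when $y=0$. On the left-hand side, the only element of $\set{1}\times\mathbf{C}$ scaled to first coordinate $0$ is the origin (scaling by $\lambda=0$), and the closedness argument from the first step guarantees that forming the conic hull introduces no further points with $y_0=0$. Both slices are thus $\set{(0,0)}$, and combining the three cases yields the claimed equality. I expect the $y_0=0$ analysis to be the main obstacle, since it is exactly the point at which one must verify that no spurious recession directions survive: the superlinear growth assumption is precisely what rules them out. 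I would also make sure the argument degrades gracefully (or simply note the implicit nonemptiness of $\mathbf{C}$) in the trivial edge case where $\mathbf{C}$ is empty.
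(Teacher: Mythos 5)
Your proof is correct and follows essentially the same route as the paper's: in both, the substance is that the points of either set with $y_0>0$ are exactly the rescalings $y_0\left(1,y/y_0\right)$ with $f(y/y_0)\leq 1$, that $y_0<0$ is excluded on both sides, and that $y_0=0$ forces $y=0$. The compactness of $\mathbf{C}$ and the closedness of the conic hull are true but not needed, since the lemma concerns $\cone\left(\{1\}\times\mathbf{C}\right)$ itself rather than its closure, and that set by definition contains no point with $y_0=0$ other than the origin; the paper instead gets the forward inclusion in one line by observing that the right-hand side is a convex cone containing $\{1\}\times\mathbf{C}$.
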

\begin{proof}
Let $\mathbf{D}:=\set{\bra{y_0,y}\in \Real^{d+1}\,:\, \bra{cl\tilde{f}}(y_0,y)\leq y_0}$. We have $\set{1}\times \mathbf{C}\subseteq   \mathbf{D}$ and $\mathbf{D}$ is a convex cone \red{because $\bra{cl\tilde{f}}(y_0,y)$ is a homogeneous function}, so $\cone\bra{\set{1}\times \mathbf{C}}\subseteq \mathbf{D}$. 

For the reverse inclusion, let $\bra{y_0,y}\in \mathbf{D}$. If $y_0=0$ then $y=0$ and hence $\bra{y_0,y}\in \cone\bra{\set{1}\times \mathbf{C}}$. If $y_0>0$ then $\bra{y_0,y}/y_0\in \set{1}\times \mathbf{C}$ and hence $\bra{y_0,y}\in \cone\bra{\set{1}\times \mathbf{C}}$.
\end{proof}

The final ingredient for the proof of  Proposition~\ref{nosepprop} is the following lemma that shows how to translate the polyhedral approximation for univariate functions to their homogenization.  

\begin{lemma}\label{lemma3} Let $f:\Real \to \Real$ be a closed convex differentiable function such that $\lim_{x\to \infty} \frac{f(x)}{\abs{x}}=\infty$. Then 
\begin{align*}
\epi\bra{cl\tilde{f}}:&=\set{\bra{y_0,y,w}\in \Real^3\,:\, \bra{cl\tilde{f}}(y_0,y)\leq w}\\&=\set{\bra{y_0,y,w}\in \Real^3\,:\, \bra{f\bra{\gamma}-\gamma f'\bra{\gamma}} y_0 + f'\bra{\gamma}y   \leq w \quad\forall \gamma \in \Real}.
\end{align*}
Furthermore, $\bra{y_0,y,w}\in \epi\bra{cl\tilde{f}}$ if and only if either $y_0=y=0\leq w$ or if $y_0> 0$ and 
\begin{equation}\label{aaaa}
\bra{f\bra{\gamma\bra{y_0,y}}-\gamma\bra{y_0,y} f'\bra{\gamma\bra{y_0,y}}} y_0 + f'\bra{\gamma\bra{y_0,y}}y   \leq w
\end{equation} for $\gamma\bra{y_0,y}$ \red{defined in Corollary~\ref{coro2}}.
\end{lemma}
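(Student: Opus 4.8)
The plan is to present $\epi\bra{cl\tilde f}$ as the intersection of the half-spaces cut out by the tangent lines of $f$, and then to locate the unique active tangent. First I would invoke the standard first-order description of a differentiable convex function as the supremum of its affine minorants: convexity gives $f(x)\ge f(\gamma)+f'(\gamma)(x-\gamma)$ for all $\gamma$, with equality at $\gamma=x$, so
\[ f(x)=\sup_{\gamma\in\Real}\bra{f(\gamma)+f'(\gamma)(x-\gamma)}\quad\text{for every }x\in\Real. \]
Writing $g(y_0,y):=\sup_{\gamma\in\Real}\bra{\bra{f(\gamma)-\gamma f'(\gamma)}y_0+f'(\gamma)y}$, the right-hand set in the claimed identity is exactly $\set{(y_0,y,w)\,:\,g(y_0,y)\le w}=\epi(g)$, so the first equality reduces to proving $g=cl\tilde f$ pointwise.

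I would then verify $g=cl\tilde f$ on the three sign regions of $y_0$. For $y_0>0$ I factor out $y_0$ and apply the representation above at $x=y/y_0$ to get $g(y_0,y)=y_0\,f(y/y_0)=\bra{cl\tilde f}(y_0,y)$, the supremum being attained at $\gamma=y/y_0$. For $y_0=0$ the expression collapses to $g(0,y)=\sup_{\gamma}f'(\gamma)y$, which equals $0$ when $y=0$ and $+\infty$ otherwise, matching $\bra{cl\tilde f}(0,y)$ provided $f'$ is onto $\Real$. For $y_0<0$ I would show $g(y_0,y)=+\infty=\bra{cl\tilde f}(y_0,y)$ by driving the coefficient $f(\gamma)-\gamma f'(\gamma)$ of $y_0$ to $-\infty$.

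The main obstacle is extracting these two boundary behaviors from the growth hypothesis $\lim_{x\to\infty}f(x)/\abs{x}=\infty$. The clean device is the identity $f(\gamma)-\gamma f'(\gamma)=-f^*\bra{f'(\gamma)}$, where $f^*$ is the convex conjugate and $\gamma$ is the maximizer defining $f^*\bra{f'(\gamma)}$ via the first-order condition. Superlinear growth forces $f'(\gamma)\to\pm\infty$ as $\gamma\to\pm\infty$ (otherwise $f$ would be asymptotically linear), so $f'$ is a continuous increasing surjection of $\Real$, which settles the $y_0=0$ case; and since $f^*$ is finite and tends to $+\infty$ at $\pm\infty$, we get $f(\gamma)-\gamma f'(\gamma)\to-\infty$, so for $y_0<0$ the relevant term tends to $+\infty$ and $g(y_0,y)=+\infty$. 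These limiting arguments, rather than the interior case, are where the real work lies.

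For the \emph{furthermore} statement I would exploit that the supremum defining $g$ is attained. When $y_0>0$ it is attained at $\gamma=y/y_0=\gamma\bra{y_0,y}$ in the notation of Corollary~\ref{coro2}, so the whole family of inequalities is implied by the single inequality \eqref{aaaa} evaluated there, and membership $(y_0,y,w)\in\epi\bra{cl\tilde f}$ is equivalent to \eqref{aaaa}. When $y_0=0$, the computation of $g(0,y)$ above shows membership holds precisely when $y=0$ and $0\le w$, the degenerate alternative $y_0=y=0\le w$; and $y_0<0$ is excluded since such points never lie in $\epi\bra{cl\tilde f}$. This case split is exactly the one asserted in the lemma.
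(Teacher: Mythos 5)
Your proposal is correct and follows essentially the same route as the paper's proof: the same case split on the sign of $y_0$, the same two key limits ($f'(\gamma)\to\pm\infty$ and $f(\gamma)-\gamma f'(\gamma)\to-\infty$ from superlinear growth), and the same observation that for $y_0>0$ the supremum over tangents is attained at $\gamma=y/y_0$, which gives the \emph{furthermore} claim. The only real difference is cosmetic: you derive $f(\gamma)-\gamma f'(\gamma)\to-\infty$ via the identity $f(\gamma)-\gamma f'(\gamma)=-f^*\bra{f'(\gamma)}$ and coercivity of $f^*$, whereas the paper gets it directly from the tangent-line inequality, and the paper obtains the inclusion $\epi\bra{cl\tilde{f}}\subseteq\mathbf{D}$ by noting $\mathbf{D}$ is a closed convex cone containing $\set{1}\times\epi\bra{f}$ rather than by your pointwise identification $g=cl\tilde{f}$.
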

\begin{proof}
First note that  $\mathbf{D}:=\set{\bra{y_0,y,w}\in \Real^3\,:\, \bra{f\bra{\gamma}-\gamma f'\bra{\gamma}} y_0 + f'\bra{\gamma}y   \leq w \quad\forall \gamma \in \Real}$ is a closed convex cone, $\epi\bra{cl\tilde{f}}=\overline{\cone}\bra{\set{1}\times \epi\bra{f}}$ and \[\epi\bra{f}=\set{\bra{y,w}\in \Real^2\,:\, \bra{f\bra{\gamma}-\gamma f'\bra{\gamma}} + f'\bra{\gamma}y   \leq w \quad\forall \gamma \in \Real}.\] 
Then $\set{1}\times \epi\bra{f}\subseteq \mathbf{D}$ implies $\epi\bra{cl\tilde{f}}\subseteq \mathbf{D}$. 

For the reverse inclusion, let $\bra{y_0,y,w}\in \mathbf{D}$. We first show that $y_0\geq 0$, by assuming $y_0<0$ and reaching a contradiction. For this we consider cases $y\neq 0$ and $y=0$ separately. For both cases note that $\lim_{x\to \infty} \frac{f(x)}{\abs{x}}=\infty$ implies that $\lim_{x\to +\infty} f'(x)=+\infty$ and $\lim_{x\to -\infty} f'(x)=-\infty$. 

For case $y\neq 0$, note that if $y_0<0$, then $y_0 f(0)\leq y_0\bra{f\bra{\gamma}-\gamma f'\bra{\gamma}}$ for all $\gamma \in \Real$ by convexity of $f$. Hence, if $\bra{y_0,y,w}\in \mathbf{D}$ and $y_0<0$ then 
\begin{equation}\label{limiteq}
y_0 f\bra{0}+f'\bra{\gamma}y   \leq w
\end{equation}   
for all $\gamma\in \Real$.
Taking limit for $\gamma\to \infty$ when $y>0$ and for $\gamma\to -\infty$ when $y<0$ in \eqref{limiteq} we arrive at a contradiction with $w<\infty$.

For case $y=0$, note that by convexity of $f$ and the mean value theorem we have that $f\bra{\gamma}-\gamma f'\bra{\gamma}\leq f\bra{\gamma_0} - f'\bra{\gamma}\gamma_0$ for any $0<\gamma_0<\gamma$. Then, $\lim_{\gamma \to \infty} f\bra{\gamma}-\gamma f'\bra{\gamma}=-\infty$. Now, if $\bra{y_0,y,w}\in \mathbf{D}$ and $y=0$ then
\begin{equation}\label{limiteq2}
\bra{f\bra{\gamma}-\gamma f'\bra{\gamma}} y_0    \leq w
\end{equation} 
for all $\gamma\in \Real$.  Then, if $y_0<0$, by taking limit for $\gamma\to \infty$ in \eqref{limiteq2} we again arrive at a contradiction with $w<\infty$. 

We now show that any $\bra{y_0,y,w}\in \mathbf{D}$ with $y_0\geq 0$ also belongs to $\epi\bra{cl\tilde{f}}$. We divide the proof into cases $y_0=0$ and $y_0>0$.

For case, $y_0=0$, note that $\bra{y_0,y,w}\in \mathbf{D}$ implies $f'\bra{\gamma}y   \leq w$ for all $\gamma\in \Real$. Taking limit  for $\gamma\to \infty$ when $y>0 $ and for $\gamma\to -\infty$ when $y<0$, we conclude that $y=0$ and $w\geq 0$. Hence, $\bra{y_0,y,w}\in \epi\bra{cl\tilde{f}}$.

For case, $y_0>0$ we can check that $\bra{y,w}/y_0\in   \epi\bra{f}$ and then $\bra{y_0,y,w}/y_0\in   \set{1}\times \epi\bra{f}$. Hence, $\bra{y_0,y,w}\in \cone\bra{\set{1}\times \epi\bra{f}}\subseteq \epi\bra{cl\tilde{f}}$.

For the final statement, it suffices to prove that $\bra{y_0,y,w}\in \epi\bra{cl\tilde{f}}$ if $y_0>0$ and $\bra{y_0,y,w}$ satisfies \eqref{aaaa}. For this note that, if the last two conditions hold, then   $\bra{y,w}/y_0\in   \epi\bra{f}$, which we have already shown implies $\bra{y_0,y,w}\in  \epi\bra{cl\tilde{f}}$.
\end{proof}
Combining these lemmas we obtain the following straightforward proof  of Proposition~\ref{nosepprop}.

\begin{proof}[of Proposition~\ref{nosepprop}]
Let $f(y)=\sum_{j=1}^d f_j(y_j)$. Then \eqref{eee} follows by noting that by Lemma~\ref{lemma2} we have $\cone\bra{\set{1}\times \mathbf{C}}=\set{\bra{y_0,y}\in \Real^{d+1}\,:\, \bra{cl\tilde{f}}(y_0,y)\leq y_0}$ and by the definition of perspective function  we have $\bra{cl\tilde{f}}(y_0,y)=\sum_{j=1}^d \bra{cl\tilde{f_j}}(y_0,y_j)$. All other statements are directly from Lemma~\ref{lemma3} and from Proposition~\ref{separableprop} by fixing $y_0=1$.

\end{proof}

\section{Additional Graphs and Tables}\label{resultappendix}
\subsection{Summary Statistics for Solve Times}\label{apendixA}

Tables~\ref{firstsolvetable}--\ref{lastsolvetable} show some summary statistics for the solve times for the different algorithms and instances. These statistics include the minimum, average and maximum solve times, together with their standard deviation. The tables also include the number of each solver was the fastest (wins) and the number of times a solver has a solution time that was within 1\% and 10\% of the fastest solver (1\% and 10\% win).

\begin{table}[H]
\centering
\subtable[Classical.]{\begin{tabular}{lrrrrrrr}
Algorithm & min & avg & max & std
& wins & 1\% win & 10\% win 
\\
\hline
CPLEXCP&0.01& \bf0.23& 2.06& 0.30&0&0&2\\
GurobiCP&0.03& \bf0.17& 2.22& 0.31&0&0&0\\
CPLEXLP&0.02& \bf1.23& 16.67& 2.69&0&0&0\\
GurobiLP&0.01& \bf1.63& 26.48& 4.04&0&0&0\\
LiftedLP&0.03& \bf0.53& 6.44& 0.74&0&0&0\\
CPLEXSepLp&0.01& \bf0.09& 0.53& 0.09&4&4&6\\
CPLEXTowerLp&0.01& \bf0.11& 0.71& 0.10&0&0&3\\
CPLEXTowerSepLp&0.01& \bf0.23& 1.49& 0.23&0&0&0\\
GurobiSepLp&0.01& \bf0.03& 0.18& 0.02&58&58&76\\
GurobiTowerLp&0.01& \bf0.03& 0.21& 0.03&38&38&58\\
GurobiTowerSepLp&0.01& \bf0.15& 1.06& 0.22&0&0&0\\
CPLEXSepLazy&0.01& \bf0.16& 0.74& 0.14&0&0&0\\
GurobiSepLazy&0.02& \bf0.17& 0.98& 0.18&0&0&0
\end{tabular}
}
\subtable[Shortfall.]{\begin{tabular}{lrrrrrrr}
Algorithm & min & avg & max & std
& wins & 1\% win & 10\% win 
\\
\hline
CPLEXCP&0.01& \bf0.23& 0.91& 0.18&1&1&1\\
GurobiCP&0.02& \bf0.76& 6.12& 1.32&2&2&3\\
CPLEXLP&0.01& \bf0.69& 13.57& 1.59&0&0&1\\
GurobiLP&0.02& \bf0.97& 16.03& 2.18&0&0&0\\
LiftedLP&0.11& \bf0.82& 6.27& 0.95&0&0&0\\
CPLEXSepLp&0.01& \bf0.15& 0.81& 0.13&2&2&3\\
CPLEXTowerLp&0.01& \bf0.12& 0.42& 0.09&5&5&6\\
CPLEXTowerSepLp&0.03& \bf0.29& 0.80& 0.19&1&1&1\\
GurobiSepLp&0.01& \bf0.04& 0.12& 0.02&35&37&53\\
GurobiTowerLp&0.01& \bf0.04& 0.14& 0.02&54&55&67\\
GurobiTowerSepLp&0.03& \bf0.17& 1.15& 0.19&0&0&0\\
CPLEXSepLazy&0.03& \bf0.28& 0.98& 0.19&0&0&0\\
GurobiSepLazy&0.03& \bf0.21& 0.97& 0.15&0&0&0
\end{tabular}
}
\subtable[Robust.]{\begin{tabular}{lrrrrrrr}
Algorithm & min & avg & max & std
& wins & 1\% win & 10\% win 
\\
\hline
CPLEXCP&0.01& \bf0.10& 0.55& 0.10&19&20&33\\
GurobiCP&0.03& \bf0.09& 0.41& 0.05&0&0&0\\
CPLEXLP&0.01& \bf0.23& 1.14& 0.17&0&0&0\\
GurobiLP&0.01& \bf0.07& 1.05& 0.14&17&17&30\\
LiftedLP&0.14& \bf4.03& 81.41& 9.25&0&0&0\\
CPLEXSepLp&0.01& \bf0.08& 0.34& 0.08&7&9&30\\
CPLEXTowerLp&0.01& \bf0.07& 0.32& 0.07&16&18&33\\
CPLEXTowerSepLp&0.03& \bf0.15& 0.61& 0.14&0&0&0\\
GurobiSepLp&0.01& \bf0.03& 0.07& 0.01&39&40&48\\
GurobiTowerLp&0.01& \bf0.04& 0.20& 0.03&2&2&5\\
GurobiTowerSepLp&0.03& \bf0.08& 0.36& 0.06&0&0&0\\
CPLEXSepLazy&0.03& \bf0.12& 0.35& 0.09&0&0&0\\
GurobiSepLazy&0.03& \bf0.12& 0.40& 0.09&0&0&0
\end{tabular}
}
\caption{Summary statistics of Solve Times for $n=20$ [s].}\label{firstsolvetable}
\end{table}
\begin{table}[H]
\centering
\subtable[Classical.]{\begin{tabular}{lrrrrrrr}
Algorithm & min & avg & max & std
& wins & 1\% win & 10\% win 
\\
\hline
CPLEXCP&0.14& \bf2.44& 91.52& 9.54&0&0&0\\
GurobiCP&0.08& \bf2.95& 124.47& 12.81&0&0&0\\
CPLEXLP&0.08& \bf101.30& 2795.96& 391.38&0&0&0\\
GurobiLP&0.05& \bf174.52& 3600.01& 605.37&0&0&0\\
LiftedLP&0.11& \bf1.35& 15.74& 2.10&0&0&0\\
CPLEXSepLp&0.01& \bf0.49& 4.45& 0.72&1&2&2\\
CPLEXTowerLp&0.01& \bf0.68& 8.36& 1.15&0&0&1\\
CPLEXTowerSepLp&0.02& \bf1.49& 21.18& 2.65&0&0&0\\
GurobiSepLp&0.01& \bf0.21& 3.94& 0.46&65&66&72\\
GurobiTowerLp&0.02& \bf0.27& 5.83& 0.69&34&37&46\\
GurobiTowerSepLp&0.04& \bf2.65& 96.69& 10.10&0&0&0\\
CPLEXSepLazy&0.03& \bf0.80& 9.53& 1.38&0&0&0\\
GurobiSepLazy&0.04& \bf1.59& 51.04& 5.30&0&0&0
\end{tabular}
}
\subtable[Shortfall.]{\begin{tabular}{lrrrrrrr}
Algorithm & min & avg & max & std
& wins & 1\% win & 10\% win 
\\
\hline
CPLEXCP&0.05& \bf2.04& 20.97& 3.31&0&0&0\\
GurobiCP&0.04& \bf11.88& 154.60& 22.32&0&0&0\\
CPLEXLP&0.02& \bf22.79& 406.01& 62.02&0&0&0\\
GurobiLP&0.05& \bf42.13& 989.04& 145.49&0&0&0\\
LiftedLP&0.17& \bf1.55& 7.20& 1.23&0&0&0\\
CPLEXSepLp&0.03& \bf0.72& 8.32& 0.90&4&4&4\\
CPLEXTowerLp&0.02& \bf0.75& 7.74& 0.97&2&2&2\\
CPLEXTowerSepLp&0.05& \bf1.90& 21.70& 2.69&0&0&0\\
GurobiSepLp&0.02& \bf0.23& 2.87& 0.43&94&94&95\\
GurobiTowerLp&0.03& \bf0.83& 8.87& 1.53&0&0&1\\
GurobiTowerSepLp&0.05& \bf2.09& 25.97& 4.25&0&0&0\\
CPLEXSepLazy&0.03& \bf1.21& 7.64& 1.11&0&0&0\\
GurobiSepLazy&0.10& \bf1.39& 13.96& 2.24&0&0&0
\end{tabular}
}
\subtable[Robust.]{\begin{tabular}{lrrrrrrr}
Algorithm & min & avg & max & std
& wins & 1\% win & 10\% win 
\\
\hline
CPLEXCP&0.02& \bf0.41& 2.12& 0.39&4&4&8\\
GurobiCP&0.12& \bf0.69& 8.47& 1.35&0&0&0\\
CPLEXLP&0.06& \bf1.56& 33.38& 4.20&0&0&0\\
GurobiLP&0.02& \bf0.68& 11.97& 1.69&6&6&11\\
LiftedLP&0.16& \bf23.53& 666.01& 77.12&0&0&0\\
CPLEXSepLp&0.02& \bf0.26& 0.84& 0.23&15&16&23\\
CPLEXTowerLp&0.02& \bf0.25& 1.14& 0.25&11&11&18\\
CPLEXTowerSepLp&0.05& \bf0.55& 2.67& 0.54&0&0&0\\
GurobiSepLp&0.02& \bf0.10& 1.33& 0.14&62&62&64\\
GurobiTowerLp&0.02& \bf0.16& 0.96& 0.19&2&2&3\\
GurobiTowerSepLp&0.05& \bf0.38& 2.67& 0.48&0&0&0\\
CPLEXSepLazy&0.04& \bf0.35& 1.68& 0.26&0&0&0\\
GurobiSepLazy&0.06& \bf0.39& 1.84& 0.34&0&0&0
\end{tabular}
}
\caption{Summary statistics of Solve Times for $n=30$ [s].}
\end{table}
\begin{table}[H]
\centering
\subtable[Classical.]{\begin{tabular}{lrrrrrrr}
Algorithm & min & avg & max & std
& wins & 1\% win & 10\% win 
\\
\hline
CPLEXCP&0.33& \bf17.43& 568.02& 58.58&0&0&0\\
GurobiCP&0.21& \bf24.56& 847.80& 88.11&0&0&0\\
CPLEXLP&0.62& \bf731.57& 3600.42& 1132.98&0&0&0\\
GurobiLP&0.64& \bf1157.74& 3600.30& 1462.59&0&0&0\\
LiftedLP&0.37& \bf5.36& 60.57& 9.19&0&0&0\\
CPLEXSepLp&0.13& \bf3.42& 70.71& 7.78&3&4&5\\
CPLEXTowerLp&0.20& \bf4.89& 102.78& 12.01&1&1&1\\
CPLEXTowerSepLp&0.28& \bf10.43& 207.66& 24.01&0&0&0\\
GurobiSepLp&0.05& \bf1.28& 28.18& 3.24&94&95&96\\
GurobiTowerLp&0.11& \bf8.85& 249.85& 26.53&1&1&1\\
GurobiTowerSepLp&0.17& \bf21.59& 646.23& 67.72&0&0&0\\
CPLEXSepLazy&0.20& \bf5.78& 111.98& 14.02&0&0&0\\
GurobiSepLazy&0.24& \bf10.71& 415.53& 42.17&1&1&1
\end{tabular}
}
\subtable[Shortfall.]{\begin{tabular}{lrrrrrrr}
Algorithm & min & avg & max & std
& wins & 1\% win & 10\% win 
\\
\hline
CPLEXCP&0.64& \bf29.08& 1168.34& 118.88&0&0&0\\
GurobiCP&0.40& \bf152.28& 1416.22& 293.82&0&0&0\\
CPLEXLP&0.78& \bf325.52& 3600.01& 738.07&0&0&0\\
GurobiLP&0.55& \bf685.09& 3600.86& 1160.38&0&0&0\\
LiftedLP&0.95& \bf9.23& 379.92& 38.05&1&1&2\\
CPLEXSepLp&0.33& \bf4.34& 64.07& 8.61&3&4&6\\
CPLEXTowerLp&0.22& \bf4.43& 84.74& 9.68&1&1&1\\
CPLEXTowerSepLp&0.64& \bf11.51& 168.16& 21.32&0&0&0\\
GurobiSepLp&0.08& \bf2.45& 74.77& 7.90&93&93&96\\
GurobiTowerLp&0.12& \bf8.80& 218.17& 23.77&0&0&0\\
GurobiTowerSepLp&0.31& \bf31.74& 1393.77& 141.23&0&0&0\\
CPLEXSepLazy&0.55& \bf5.45& 52.10& 7.71&2&2&3\\
GurobiSepLazy&0.39& \bf13.79& 578.08& 58.35&0&0&0
\end{tabular}
}
\subtable[Robust.]{\begin{tabular}{lrrrrrrr}
Algorithm & min & avg & max & std
& wins & 1\% win & 10\% win 
\\
\hline
CPLEXCP&0.04& \bf1.86& 43.15& 4.95&2&2&2\\
GurobiCP&0.08& \bf1.92& 41.76& 6.11&0&0&0\\
CPLEXLP&0.08& \bf22.79& 1197.49& 130.23&0&0&0\\
GurobiLP&0.02& \bf21.77& 1392.90& 149.35&4&4&10\\
LiftedLP&0.78& \bf114.09& 3600.00& 530.85&0&0&1\\
CPLEXSepLp&0.03& \bf0.71& 8.93& 1.10&9&9&11\\
CPLEXTowerLp&0.03& \bf0.84& 13.61& 1.79&8&8&13\\
CPLEXTowerSepLp&0.07& \bf1.87& 34.23& 3.97&0&0&0\\
GurobiSepLp&0.03& \bf0.26& 4.54& 0.51&69&69&74\\
GurobiTowerLp&0.03& \bf0.83& 19.90& 2.66&7&7&13\\
GurobiTowerSepLp&0.07& \bf2.07& 52.12& 6.82&0&0&0\\
CPLEXSepLazy&0.06& \bf0.85& 7.78& 1.24&1&1&1\\
GurobiSepLazy&0.10& \bf1.30& 32.36& 3.54&0&0&0
\end{tabular}
}
\caption{Summary statistics of Solve Times for $n=40$ [s].}
\end{table}
\begin{table}[H]
\centering
\subtable[Classical.]{\begin{tabular}{lrrrrrrr}
Algorithm & min & avg & max & std
& wins & 1\% win & 10\% win 
\\
\hline
CPLEXCP&0.43& \bf188.32& 3600.25& 511.63&0&0&0\\
GurobiCP&0.33& \bf254.29& 3600.01& 619.08&0&0&0\\
CPLEXLP&0.52& \bf1941.58& 3600.07& 1515.59&0&0&0\\
GurobiLP&1.06& \bf2375.47& 3608.86& 1488.32&0&0&0\\
LiftedLP&0.73& \bf42.46& 1368.69& 147.18&1&1&2\\
CPLEXSepLp&0.14& \bf21.88& 471.47& 57.54&7&8&9\\
CPLEXTowerLp&0.24& \bf33.96& 858.26& 104.09&1&1&2\\
CPLEXTowerSepLp&0.57& \bf67.57& 1957.18& 210.80&0&0&0\\
GurobiSepLp&0.07& \bf21.63& 1255.85& 126.13&87&88&91\\
GurobiTowerLp&0.08& \bf102.69& 3600.00& 382.69&0&0&1\\
GurobiTowerSepLp&0.31& \bf197.25& 3600.00& 501.15&0&0&0\\
CPLEXSepLazy&0.30& \bf28.20& 384.63& 65.40&4&4&4\\
GurobiSepLazy&0.32& \bf88.79& 3600.00& 369.09&0&0&0
\end{tabular}
}
\subtable[Shortfall.]{\begin{tabular}{lrrrrrrr}
Algorithm & min & avg & max & std
& wins & 1\% win & 10\% win 
\\
\hline
CPLEXCP&1.01& \bf160.11& 3600.22& 427.35&0&0&0\\
GurobiCP&0.53& \bf1148.66& 3600.02& 1460.67&0&0&0\\
CPLEXLP&0.62& \bf1231.56& 3600.44& 1446.50&0&0&0\\
GurobiLP&0.32& \bf1773.43& 3658.96& 1614.40&0&0&0\\
LiftedLP&0.94& \bf28.42& 1000.65& 102.41&2&3&3\\
CPLEXSepLp&0.35& \bf16.64& 458.53& 48.68&12&12&15\\
CPLEXTowerLp&0.35& \bf24.49& 889.35& 90.93&4&4&5\\
CPLEXTowerSepLp&0.54& \bf77.90& 2216.07& 261.45&0&0&0\\
GurobiSepLp&0.12& \bf20.27& 876.51& 91.10&78&79&79\\
GurobiTowerLp&0.19& \bf74.46& 3600.00& 363.09&1&1&1\\
GurobiTowerSepLp&0.58& \bf147.20& 3600.00& 405.19&0&0&0\\
CPLEXSepLazy&0.62& \bf21.93& 572.22& 62.57&3&3&6\\
GurobiSepLazy&0.44& \bf74.11& 3562.78& 359.48&0&0&0
\end{tabular}
}
\subtable[Robust.]{\begin{tabular}{lrrrrrrr}
Algorithm & min & avg & max & std
& wins & 1\% win & 10\% win 
\\
\hline
CPLEXCP&0.06& \bf9.35& 226.54& 31.22&3&3&3\\
GurobiCP&0.12& \bf10.76& 355.45& 42.73&0&0&0\\
CPLEXLP&0.27& \bf133.90& 3600.02& 563.01&0&0&0\\
GurobiLP&0.04& \bf162.41& 3604.89& 709.57&3&4&4\\
LiftedLP&0.92& \bf62.95& 3600.01& 365.07&1&1&1\\
CPLEXSepLp&0.04& \bf1.95& 20.83& 3.70&14&14&19\\
CPLEXTowerLp&0.04& \bf2.92& 72.92& 8.85&1&1&3\\
CPLEXTowerSepLp&0.10& \bf7.49& 190.99& 23.24&0&0&0\\
GurobiSepLp&0.04& \bf1.64& 57.29& 6.40&71&71&77\\
GurobiTowerLp&0.05& \bf4.75& 103.85& 16.26&6&6&11\\
GurobiTowerSepLp&0.14& \bf11.63& 253.89& 39.10&0&0&0\\
CPLEXSepLazy&0.11& \bf2.65& 45.20& 6.24&1&1&1\\
GurobiSepLazy&0.12& \bf4.40& 76.78& 12.17&0&0&0
\end{tabular}
}
\caption{Summary statistics of Solve Times for $n=50$ [s].}
\end{table}
\begin{table}[H]
\centering
\subtable[Classical.]{\begin{tabular}{lrrrrrrr}
Algorithm & min & avg & max & std
& wins & 1\% win & 10\% win 
\\
\hline
CPLEXCP&0.63& \bf440.28& 3602.68& 851.14&0&0&0\\
GurobiCP&0.57& \bf574.65& 3600.00& 1016.49&0&0&0\\
CPLEXLP&3.50& \bf2664.89& 3601.10& 1354.02&0&0&0\\
GurobiLP&4.35& \bf2969.53& 3617.48& 1237.05&0&0&0\\
LiftedLP&1.71& \bf141.92& 3471.98& 452.83&1&1&2\\
CPLEXSepLp&0.31& \bf68.85& 1293.39& 174.87&4&4&8\\
CPLEXTowerLp&0.38& \bf110.91& 2728.20& 318.85&0&0&1\\
CPLEXTowerSepLp&0.67& \bf212.91& 3600.00& 526.40&0&0&0\\
GurobiSepLp&0.08& \bf80.51& 3600.00& 374.64&74&75&77\\
GurobiTowerLp&0.15& \bf254.80& 3600.00& 649.20&0&0&0\\
GurobiTowerSepLp&0.31& \bf477.73& 3600.00& 886.20&0&0&0\\
CPLEXSepLazy&0.54& \bf56.38& 1100.44& 149.15&20&20&20\\
GurobiSepLazy&0.53& \bf200.00& 3600.00& 541.28&1&1&1
\end{tabular}
}
\subtable[Shortfall.]{\begin{tabular}{lrrrrrrr}
Algorithm & min & avg & max & std
& wins & 1\% win & 10\% win 
\\
\hline
CPLEXCP&1.58& \bf447.80& 3600.00& 849.68&0&0&0\\
GurobiCP&4.71& \bf1947.87& 3600.03& 1594.42&0&0&0\\
CPLEXLP&2.51& \bf2065.17& 3601.08& 1569.51&0&0&0\\
GurobiLP&1.61& \bf2440.88& 3609.79& 1453.93&0&0&0\\
LiftedLP&2.55& \bf75.88& 1793.78& 211.91&9&9&10\\
CPLEXSepLp&0.82& \bf61.51& 2284.68& 234.88&9&9&13\\
CPLEXTowerLp&0.51& \bf97.22& 3600.95& 406.79&2&3&3\\
CPLEXTowerSepLp&1.93& \bf238.37& 3600.00& 558.70&0&0&0\\
GurobiSepLp&0.16& \bf84.67& 3600.00& 383.37&65&65&68\\
GurobiTowerLp&0.32& \bf181.66& 3600.00& 493.36&0&0&1\\
GurobiTowerSepLp&0.75& \bf386.34& 3600.01& 772.94&0&0&0\\
CPLEXSepLazy&1.56& \bf59.45& 1643.17& 175.33&15&15&18\\
GurobiSepLazy&1.07& \bf141.33& 3600.00& 416.08&0&0&0
\end{tabular}
}
\subtable[Robust.]{\begin{tabular}{lrrrrrrr}
Algorithm & min & avg & max & std
& wins & 1\% win & 10\% win 
\\
\hline
CPLEXCP&0.08& \bf12.35& 325.85& 34.59&1&1&1\\
GurobiCP&0.61& \bf21.46& 1006.61& 103.22&0&0&0\\
CPLEXLP&0.52& \bf197.03& 3600.02& 608.43&0&0&0\\
GurobiLP&0.07& \bf211.17& 3600.01& 681.26&1&1&2\\
LiftedLP&1.43& \bf71.35& 3600.01& 399.94&3&3&4\\
CPLEXSepLp&0.41& \bf2.77& 26.13& 3.25&10&10&16\\
CPLEXTowerLp&0.06& \bf4.12& 62.27& 7.32&2&2&3\\
CPLEXTowerSepLp&0.15& \bf9.01& 124.45& 14.65&0&0&0\\
GurobiSepLp&0.13& \bf2.28& 29.70& 4.60&71&72&73\\
GurobiTowerLp&0.09& \bf5.29& 113.54& 13.76&11&11&14\\
GurobiTowerSepLp&0.21& \bf14.59& 300.93& 37.99&0&0&0\\
CPLEXSepLazy&0.17& \bf3.31& 36.10& 4.39&0&0&2\\
GurobiSepLazy&0.23& \bf5.10& 64.45& 9.08&1&1&2
\end{tabular}
}
\caption{Summary statistics of Solve Times for $n=60$ [s].}
\end{table}

\begin{table}[H]
\centering
\subtable[Classical.]{\begin{tabular}{lrrrrrrr}
Algorithm & min & avg & max & std
& wins & 1\% win & 10\% win 
\\
\hline
LiftedLP&6.62& \bf1624.83& 3600.02& 1471.37&3&17&22\\
CPLEXSepLp&1.66& \bf1484.69& 3600.21& 1403.10&14&28&33\\
GurobiSepLp&0.34& \bf1439.88& 3600.31& 1437.49&42&56&57\\
CPLEXSepLazy&2.98& \bf1452.46& 3600.02& 1463.32&40&40&45\\
GurobiSepLazy&2.55& \bf2155.33& 3600.02& 1477.84&1&15&16
\end{tabular}
}
\subtable[Shortfall.]{\begin{tabular}{lrrrrrrr}
Algorithm & min & avg & max & std
& wins & 1\% win & 10\% win 
\\
\hline
LiftedLP&13.16& \bf1345.62& 3600.12& 1391.11&19&24&30\\
CPLEXSepLp&2.39& \bf1232.79& 3600.11& 1302.56&14&22&27\\
GurobiSepLp&0.58& \bf1493.44& 3600.19& 1493.08&29&37&40\\
CPLEXSepLazy&5.16& \bf1129.58& 3600.12& 1305.08&37&42&46\\
GurobiSepLazy&3.73& \bf2081.67& 3600.08& 1506.22&1&8&8
\end{tabular}
}
\subtable[Robust.]{\begin{tabular}{lrrrrrrr}
Algorithm & min & avg & max & std
& wins & 1\% win & 10\% win 
\\
\hline
CPLEXCP&3.62& \bf162.04& 2043.65& 375.08&0&0&0\\
GurobiCP&2.55& \bf196.83& 3600.03& 509.70&0&0&0\\
CPLEXLP&2.00& \bf1056.43& 3600.10& 1362.59&0&0&0\\
GurobiLP&0.47& \bf1079.13& 3603.90& 1472.19&1&1&1\\
LiftedLP&4.27& \bf71.45& 3430.89& 345.06&2&2&6\\
CPLEXSepLp&1.38& \bf17.48& 144.82& 26.37&25&25&30\\
CPLEXTowerLp&0.99& \bf29.77& 535.74& 62.07&0&0&0\\
CPLEXTowerSepLp&2.42& \bf104.37& 2712.22& 324.25&0&0&0\\
GurobiSepLp&0.39& \bf32.04& 595.06& 86.96&57&57&59\\
GurobiTowerLp&0.62& \bf58.55& 658.53& 124.77&5&6&7\\
GurobiTowerSepLp&1.01& \bf167.12& 1750.92& 343.94&0&0&0\\
CPLEXSepLazy&1.73& \bf23.22& 229.88& 41.66&7&7&10\\
GurobiSepLazy&1.43& \bf54.06& 706.76& 126.63&3&3&5
\end{tabular}
}
\caption{Summary statistics of Solve Times for $n=100$ [s].}
\end{table}

\begin{table}[H]
\centering
\subtable[Robust.]{\begin{tabular}{lrrrrrrr}
Algorithm & min & avg & max & std
& wins & 1\% win & 10\% win 
\\
\hline
CPLEXCP&28.85& \bf2172.25& 3600.07& 1441.31&0&1&1\\
GurobiCP&21.90& \bf2053.05& 3600.09& 1435.75&0&1&1\\
CPLEXLP&14.74& \bf3036.45& 3600.11& 1143.01&0&1&1\\
GurobiLP&5.63& \bf2841.03& 3604.06& 1342.22&0&1&1\\
LiftedLP&18.26& \bf529.57& 3600.04& 832.69&6&8&11\\
CPLEXSepLp&5.00& \bf426.90& 3600.04& 854.99&23&25&33\\
CPLEXTowerLp&6.77& \bf740.08& 3600.06& 1163.97&0&1&2\\
CPLEXTowerSepLp&12.89& \bf1151.67& 3600.40& 1271.52&0&1&1\\
GurobiSepLp&1.03& \bf593.35& 3600.05& 1064.61&54&55&56\\
GurobiTowerLp&3.02& \bf1033.21& 3600.05& 1294.31&1&2&2\\
GurobiTowerSepLp&6.31& \bf1685.98& 3600.07& 1411.79&0&1&1\\
CPLEXSepLazy&5.51& \bf406.56& 3600.01& 752.93&16&17&24\\
GurobiSepLazy&7.23& \bf954.15& 3600.09& 1245.66&0&1&2
\end{tabular}
}
\caption{Summary statistics of Solve Times for $n=200$ [s].}\label{summaryfortwohundred}
\end{table}

\begin{table}[H]
\centering
\subtable[Robust.]{\begin{tabular}{lrrrrrrr}
Algorithm & min & avg & max & std
& wins & 1\% win & 10\% win 
\\
\hline
LiftedLP&92.22& \bf1660.09& 3602.30& 1426.85&1&20&20\\
CPLEXSepLp&27.42& \bf1381.03& 3600.08& 1417.33&23&42&46\\
GurobiSepLp&4.11& \bf1429.40& 3600.12& 1432.61&45&65&66\\
CPLEXSepLazy&31.46& \bf1390.95& 3600.05& 1422.92&22&31&34\\
GurobiSepLazy&13.96& \bf2114.17& 3600.05& 1470.40&9&19&19
\end{tabular}
}
\caption{Summary statistics of Solve Times for $n=300$ [s].}\label{lastsolvetable}
\end{table}
\subsection{Summary Statistics for Solution Quality}\label{qualityappendix}

Tables~\ref{firstqualitytable}--\ref{lastqualitytable} present summary statistics for the maximum violation of constraints \eqref{conicc} by the optimal solution returned by the solver. This value is calculated as
\[
\max_{l=1}^q \norm{A^l \bar{x} + b^l}_2^2 - \bra{a^l \bar{x} + b_0^l}^2
\]
where $\bar{x}$ is the optimal solution. 
\begin{table}[H]
\centering
\subtable[Classical.]{\begin{tabular}{lrrrrrrr}
Algorithm & min & avg & max & std
\\
\hline
LiftedLP&0.00e+00& \bf5.87e-09& 8.31e-08& 1.20e-08\\
CPLEXSepLp&9.36e-11& \bf5.66e-09& 5.50e-08& 8.56e-09\\
CPLEXTowerLp&0.00e+00& \bf6.24e-09& 6.36e-08& 9.71e-09\\
CPLEXTowerSepLp&0.00e+00& \bf4.16e-09& 3.59e-08& 7.12e-09\\
GurobiSepLp&3.84e-04& \bf5.98e-04& 8.28e-04& 8.31e-05\\
GurobiTowerLp&3.88e-04& \bf5.68e-04& 7.76e-04& 8.14e-05\\
GurobiTowerSepLp&4.02e-06& \bf3.92e-05& 1.54e-04& 3.13e-05\\
CPLEXSepLazy&4.68e-06& \bf6.40e-06& 8.82e-06& 8.99e-07\\
GurobiSepLazy&4.11e-06& \bf6.31e-06& 8.72e-06& 9.17e-07
\end{tabular}
}
\subtable[Shortfall.]{\begin{tabular}{lrrrrrrr}
Algorithm & min & avg & max & std
\\
\hline
LiftedLP&0.00e+00& \bf0.00e+00& 0.00e+00& 0.00e+00\\
CPLEXSepLp&0.00e+00& \bf5.19e-10& 1.30e-08& 2.03e-09\\
CPLEXTowerLp&0.00e+00& \bf1.67e-11& 1.09e-09& 1.23e-10\\
CPLEXTowerSepLp&0.00e+00& \bf6.25e-10& 5.18e-08& 5.21e-09\\
GurobiSepLp&4.19e-04& \bf6.47e-04& 8.46e-04& 8.87e-05\\
GurobiTowerLp&3.81e-04& \bf5.65e-04& 8.39e-04& 9.37e-05\\
GurobiTowerSepLp&4.82e-06& \bf5.36e-05& 2.50e-04& 4.29e-05\\
CPLEXSepLazy&0.00e+00& \bf6.37e-06& 9.33e-06& 1.35e-06\\
GurobiSepLazy&0.00e+00& \bf6.17e-06& 8.90e-06& 1.47e-06
\end{tabular}
}
\subtable[Robust.]{\begin{tabular}{lrrrrrrr}
Algorithm & min & avg & max & std
\\
\hline
LiftedLP&0.00e+00& \bf6.59e-09& 8.05e-08& 1.35e-08\\
CPLEXSepLp&1.26e-11& \bf4.74e-09& 3.57e-08& 6.12e-09\\
CPLEXTowerLp&7.06e-11& \bf5.09e-09& 8.23e-08& 1.01e-08\\
CPLEXTowerSepLp&1.02e-11& \bf2.40e-09& 2.69e-08& 4.59e-09\\
GurobiSepLp&4.51e-04& \bf6.27e-04& 1.02e-03& 9.93e-05\\
GurobiTowerLp&1.92e-06& \bf3.07e-05& 1.19e-04& 2.11e-05\\
GurobiTowerSepLp&2.27e-06& \bf4.90e-05& 1.44e-04& 3.46e-05\\
CPLEXSepLazy&2.29e-06& \bf4.83e-06& 8.02e-06& 1.16e-06\\
GurobiSepLazy&2.29e-06& \bf4.71e-06& 7.50e-06& 1.05e-06
\end{tabular}
}
\caption{Summary statistics of conic constraint violation for $n=20$.}\label{firstqualitytable}
\end{table}
\begin{table}[H]
\centering
\subtable[Classical.]{\begin{tabular}{lrrrrrrr}
Algorithm & min & avg & max & std
\\
\hline
LiftedLP&0.00e+00& \bf7.78e-09& 9.01e-08& 1.57e-08\\
CPLEXSepLp&0.00e+00& \bf9.56e-09& 7.86e-08& 1.41e-08\\
CPLEXTowerLp&0.00e+00& \bf5.75e-09& 5.76e-08& 1.02e-08\\
CPLEXTowerSepLp&0.00e+00& \bf1.67e-09& 1.95e-08& 3.52e-09\\
GurobiSepLp&4.91e-04& \bf7.52e-04& 1.03e-03& 1.10e-04\\
GurobiTowerLp&4.77e-04& \bf7.10e-04& 1.04e-03& 1.18e-04\\
GurobiTowerSepLp&2.71e-06& \bf4.19e-05& 1.69e-04& 3.20e-05\\
CPLEXSepLazy&4.73e-06& \bf8.32e-06& 1.17e-05& 1.25e-06\\
GurobiSepLazy&4.93e-06& \bf8.23e-06& 1.19e-05& 1.34e-06
\end{tabular}
}
\subtable[Shortfall.]{\begin{tabular}{lrrrrrrr}
Algorithm & min & avg & max & std
\\
\hline
LiftedLP&0.00e+00& \bf0.00e+00& 0.00e+00& 0.00e+00\\
CPLEXSepLp&0.00e+00& \bf2.32e-09& 2.40e-08& 4.58e-09\\
CPLEXTowerLp&0.00e+00& \bf0.00e+00& 0.00e+00& 0.00e+00\\
CPLEXTowerSepLp&0.00e+00& \bf0.00e+00& 0.00e+00& 0.00e+00\\
GurobiSepLp&5.46e-04& \bf7.91e-04& 1.15e-03& 1.12e-04\\
GurobiTowerLp&0.00e+00& \bf3.93e-05& 2.42e-04& 3.31e-05\\
GurobiTowerSepLp&0.00e+00& \bf5.30e-05& 3.60e-04& 5.19e-05\\
CPLEXSepLazy&0.00e+00& \bf8.38e-06& 1.11e-05& 1.38e-06\\
GurobiSepLazy&0.00e+00& \bf8.20e-06& 1.20e-05& 1.95e-06
\end{tabular}
}
\subtable[Robust.]{\begin{tabular}{lrrrrrrr}
Algorithm & min & avg & max & std
\\
\hline
LiftedLP&0.00e+00& \bf6.47e-09& 1.12e-07& 1.50e-08\\
CPLEXSepLp&2.17e-11& \bf4.49e-09& 5.83e-08& 7.64e-09\\
CPLEXTowerLp&1.13e-11& \bf3.68e-09& 4.03e-08& 6.82e-09\\
CPLEXTowerSepLp&0.00e+00& \bf9.52e-10& 1.42e-08& 1.74e-09\\
GurobiSepLp&5.57e-04& \bf8.22e-04& 1.23e-03& 1.30e-04\\
GurobiTowerLp&1.04e-06& \bf2.95e-05& 1.42e-04& 2.65e-05\\
GurobiTowerSepLp&8.51e-07& \bf4.09e-05& 1.71e-04& 3.84e-05\\
CPLEXSepLazy&2.70e-06& \bf6.75e-06& 1.00e-05& 1.39e-06\\
GurobiSepLazy&2.21e-06& \bf6.31e-06& 9.71e-06& 1.46e-06
\end{tabular}
}
\caption{Summary statistics of conic constraint violation for $n=30$.}
\end{table}
\begin{table}[H]
\centering
\subtable[Classical.]{\begin{tabular}{lrrrrrrr}
Algorithm & min & avg & max & std
\\
\hline
LiftedLP&0.00e+00& \bf4.47e-09& 1.23e-07& 1.68e-08\\
CPLEXSepLp&3.96e-11& \bf9.40e-09& 8.67e-08& 1.56e-08\\
CPLEXTowerLp&0.00e+00& \bf3.38e-09& 3.01e-08& 6.07e-09\\
CPLEXTowerSepLp&0.00e+00& \bf1.02e-09& 4.12e-08& 4.36e-09\\
GurobiSepLp&6.61e-04& \bf9.23e-04& 1.31e-03& 1.33e-04\\
GurobiTowerLp&1.60e-06& \bf3.91e-05& 2.31e-04& 3.83e-05\\
GurobiTowerSepLp&3.59e-06& \bf6.26e-05& 2.20e-04& 5.33e-05\\
CPLEXSepLazy&6.73e-06& \bf9.94e-06& 1.32e-05& 1.32e-06\\
GurobiSepLazy&7.19e-06& \bf1.03e-05& 1.41e-05& 1.48e-06
\end{tabular}
}
\subtable[Shortfall.]{\begin{tabular}{lrrrrrrr}
Algorithm & min & avg & max & std
\\
\hline
LiftedLP&0.00e+00& \bf0.00e+00& 0.00e+00& 0.00e+00\\
CPLEXSepLp&0.00e+00& \bf4.15e-09& 4.05e-08& 6.54e-09\\
CPLEXTowerLp&0.00e+00& \bf0.00e+00& 0.00e+00& 0.00e+00\\
CPLEXTowerSepLp&0.00e+00& \bf1.75e-11& 1.55e-09& 1.56e-10\\
GurobiSepLp&7.17e-04& \bf9.36e-04& 1.26e-03& 1.16e-04\\
GurobiTowerLp&2.82e-07& \bf5.86e-05& 3.51e-04& 6.24e-05\\
GurobiTowerSepLp&5.73e-06& \bf6.74e-05& 6.29e-04& 8.16e-05\\
CPLEXSepLazy&7.61e-06& \bf1.05e-05& 1.41e-05& 1.32e-06\\
GurobiSepLazy&0.00e+00& \bf1.00e-05& 1.30e-05& 1.65e-06
\end{tabular}
}
\subtable[Robust.]{\begin{tabular}{lrrrrrrr}
Algorithm & min & avg & max & std
\\
\hline
LiftedLP&0.00e+00& \bf5.48e-09& 8.07e-08& 1.35e-08\\
CPLEXSepLp&6.25e-12& \bf3.72e-09& 3.74e-08& 6.68e-09\\
CPLEXTowerLp&0.00e+00& \bf2.25e-09& 3.65e-08& 4.38e-09\\
CPLEXTowerSepLp&0.00e+00& \bf1.54e-09& 2.68e-08& 3.31e-09\\
GurobiSepLp&7.19e-04& \bf9.86e-04& 1.35e-03& 1.57e-04\\
GurobiTowerLp&3.04e-06& \bf2.97e-05& 1.38e-04& 2.42e-05\\
GurobiTowerSepLp&2.77e-06& \bf4.00e-05& 2.34e-04& 3.66e-05\\
CPLEXSepLazy&3.38e-06& \bf8.47e-06& 1.25e-05& 1.90e-06\\
GurobiSepLazy&3.58e-06& \bf8.51e-06& 1.32e-05& 1.94e-06
\end{tabular}
}
\caption{Summary statistics of conic constraint violation for $n=40$.}
\end{table}
\begin{table}[H]
\centering
\subtable[Classical.]{\begin{tabular}{lrrrrrrr}
Algorithm & min & avg & max & std
\\
\hline
LiftedLP&0.00e+00& \bf5.00e-09& 1.46e-07& 2.11e-08\\
CPLEXSepLp&1.27e-11& \bf9.51e-09& 5.63e-08& 1.20e-08\\
CPLEXTowerLp&0.00e+00& \bf3.34e-09& 5.17e-08& 6.74e-09\\
CPLEXTowerSepLp&0.00e+00& \bf1.14e-09& 4.82e-08& 5.31e-09\\
GurobiSepLp&7.63e-04& \bf1.13e-03& 1.52e-03& 1.56e-04\\
GurobiTowerLp&0.00e+00& \bf6.45e-05& 3.19e-04& 5.47e-05\\
GurobiTowerSepLp&0.00e+00& \bf6.97e-05& 3.03e-04& 6.10e-05\\
CPLEXSepLazy&8.87e-06& \bf1.23e-05& 1.64e-05& 1.51e-06\\
GurobiSepLazy&7.96e-06& \bf1.21e-05& 1.62e-05& 1.43e-06
\end{tabular}
}
\subtable[Shortfall.]{\begin{tabular}{lrrrrrrr}
Algorithm & min & avg & max & std
\\
\hline
LiftedLP&0.00e+00& \bf0.00e+00& 0.00e+00& 0.00e+00\\
CPLEXSepLp&0.00e+00& \bf4.62e-09& 3.75e-08& 6.74e-09\\
CPLEXTowerLp&0.00e+00& \bf0.00e+00& 0.00e+00& 0.00e+00\\
CPLEXTowerSepLp&0.00e+00& \bf0.00e+00& 0.00e+00& 0.00e+00\\
GurobiSepLp&7.21e-04& \bf1.10e-03& 1.54e-03& 1.49e-04\\
GurobiTowerLp&0.00e+00& \bf8.81e-05& 3.75e-04& 9.06e-05\\
GurobiTowerSepLp&0.00e+00& \bf9.94e-05& 4.00e-04& 8.90e-05\\
CPLEXSepLazy&6.48e-06& \bf1.22e-05& 1.70e-05& 1.91e-06\\
GurobiSepLazy&0.00e+00& \bf1.22e-05& 1.66e-05& 2.05e-06
\end{tabular}
}
\subtable[Robust.]{\begin{tabular}{lrrrrrrr}
Algorithm & min & avg & max & std
\\
\hline
LiftedLP&0.00e+00& \bf6.56e-09& 2.55e-07& 2.76e-08\\
CPLEXSepLp&2.47e-11& \bf5.58e-09& 6.47e-08& 1.26e-08\\
CPLEXTowerLp&0.00e+00& \bf2.99e-09& 3.95e-08& 5.81e-09\\
CPLEXTowerSepLp&0.00e+00& \bf5.75e-10& 1.49e-08& 1.87e-09\\
GurobiSepLp&8.24e-04& \bf1.15e-03& 1.54e-03& 1.66e-04\\
GurobiTowerLp&2.00e-06& \bf3.71e-05& 2.38e-04& 4.06e-05\\
GurobiTowerSepLp&2.56e-06& \bf4.73e-05& 1.94e-04& 4.30e-05\\
CPLEXSepLazy&4.09e-06& \bf1.01e-05& 1.48e-05& 2.07e-06\\
GurobiSepLazy&3.59e-06& \bf1.01e-05& 1.65e-05& 2.17e-06
\end{tabular}
}
\caption{Summary statistics of conic constraint violation for $n=50$.}
\end{table}
\begin{table}[H]
\centering
\subtable[Classical.]{\begin{tabular}{lrrrrrrr}
Algorithm & min & avg & max & std
\\
\hline
LiftedLP&0.00e+00& \bf7.16e-09& 1.57e-07& 2.34e-08\\
CPLEXSepLp&0.00e+00& \bf1.14e-08& 1.32e-07& 2.02e-08\\
CPLEXTowerLp&0.00e+00& \bf2.23e-09& 2.41e-08& 4.45e-09\\
CPLEXTowerSepLp&0.00e+00& \bf3.57e-10& 2.38e-08& 2.41e-09\\
GurobiSepLp&0.00e+00& \bf1.31e-03& 1.69e-03& 2.19e-04\\
GurobiTowerLp&0.00e+00& \bf6.18e-05& 2.65e-04& 5.12e-05\\
GurobiTowerSepLp&0.00e+00& \bf8.76e-05& 4.45e-04& 8.17e-05\\
CPLEXSepLazy&1.11e-05& \bf1.39e-05& 1.83e-05& 1.56e-06\\
GurobiSepLazy&8.65e-06& \bf1.41e-05& 1.90e-05& 1.89e-06
\end{tabular}
}
\subtable[Shortfall.]{\begin{tabular}{lrrrrrrr}
Algorithm & min & avg & max & std
\\
\hline
LiftedLP&0.00e+00& \bf0.00e+00& 0.00e+00& 0.00e+00\\
CPLEXSepLp&0.00e+00& \bf6.95e-09& 1.61e-07& 1.79e-08\\
CPLEXTowerLp&0.00e+00& \bf1.06e-11& 1.06e-09& 1.06e-10\\
CPLEXTowerSepLp&0.00e+00& \bf0.00e+00& 0.00e+00& 0.00e+00\\
GurobiSepLp&0.00e+00& \bf1.24e-03& 1.63e-03& 1.85e-04\\
GurobiTowerLp&0.00e+00& \bf1.14e-04& 5.25e-04& 9.93e-05\\
GurobiTowerSepLp&0.00e+00& \bf1.16e-04& 5.57e-04& 1.09e-04\\
CPLEXSepLazy&9.24e-06& \bf1.43e-05& 2.01e-05& 1.97e-06\\
GurobiSepLazy&1.06e-05& \bf1.43e-05& 2.19e-05& 2.03e-06
\end{tabular}
}
\subtable[Robust.]{\begin{tabular}{lrrrrrrr}
Algorithm & min & avg & max & std
\\
\hline
LiftedLP&0.00e+00& \bf6.00e-09& 8.58e-08& 1.42e-08\\
CPLEXSepLp&2.80e-11& \bf7.93e-09& 1.14e-07& 1.46e-08\\
CPLEXTowerLp&0.00e+00& \bf2.10e-09& 4.10e-08& 4.76e-09\\
CPLEXTowerSepLp&0.00e+00& \bf5.02e-10& 1.48e-08& 1.83e-09\\
GurobiSepLp&9.63e-04& \bf1.31e-03& 1.56e-03& 1.27e-04\\
GurobiTowerLp&2.06e-06& \bf3.58e-05& 1.27e-04& 2.96e-05\\
GurobiTowerSepLp&2.53e-06& \bf4.82e-05& 2.06e-04& 4.36e-05\\
CPLEXSepLazy&8.41e-06& \bf1.27e-05& 1.71e-05& 1.45e-06\\
GurobiSepLazy&5.04e-06& \bf1.21e-05& 1.77e-05& 1.83e-06
\end{tabular}
}
\caption{Summary statistics of conic constraint violation for $n=60$.}\label{lastqualitytable}
\end{table}

\begin{table}[H]
\centering
\subtable[Classical.]{\begin{tabular}{lrrrrrrr}
Algorithm & min & avg & max & std
\\
\hline
LiftedLP&0.00e+00& \bf4.18e-09& 1.13e-07& 1.40e-08\\
CPLEXSepLp&0.00e+00& \bf9.51e-09& 1.13e-07& 1.97e-08\\
GurobiSepLp&0.00e+00& \bf1.68e-03& 3.04e-03& 9.73e-04\\
CPLEXSepLazy&1.58e-05& \bf2.12e-05& 2.59e-05& 1.97e-06\\
GurobiSepLazy&1.39e-05& \bf2.12e-05& 2.69e-05& 2.31e-06
\end{tabular}
}
\subtable[Shortfall.]{\begin{tabular}{lrrrrrrr}
Algorithm & min & avg & max & std
\\
\hline
LiftedLP&0.00e+00& \bf0.00e+00& 0.00e+00& 0.00e+00\\
CPLEXSepLp&0.00e+00& \bf9.92e-09& 1.21e-07& 2.03e-08\\
GurobiSepLp&0.00e+00& \bf1.39e-03& 2.29e-03& 8.42e-04\\
CPLEXSepLazy&1.41e-05& \bf2.14e-05& 2.98e-05& 2.61e-06\\
GurobiSepLazy&0.00e+00& \bf2.19e-05& 2.93e-05& 3.30e-06
\end{tabular}
}
\subtable[Robust.]{\begin{tabular}{lrrrrrrr}
Algorithm & min & avg & max & std
\\
\hline
CPLEXCP&0.00e+00& \bf6.93e-09& 1.78e-07& 2.69e-08\\
GurobiCP&0.00e+00& \bf2.55e-08& 1.18e-06& 1.30e-07\\
CPLEXLP&0.00e+00& \bf4.17e-09& 3.42e-08& 6.27e-09\\
GurobiLP&0.00e+00& \bf6.43e-05& 9.99e-05& 3.59e-05\\
LiftedLP&0.00e+00& \bf8.18e-09& 1.01e-07& 1.88e-08\\
CPLEXSepLp&0.00e+00& \bf1.57e-08& 1.61e-07& 3.26e-08\\
CPLEXTowerLp&0.00e+00& \bf1.20e-09& 3.95e-08& 4.32e-09\\
CPLEXTowerSepLp&0.00e+00& \bf1.15e-10& 7.55e-09& 7.60e-10\\
GurobiSepLp&1.51e-03& \bf1.86e-03& 2.35e-03& 1.94e-04\\
GurobiTowerLp&2.34e-06& \bf5.25e-05& 2.63e-04& 4.32e-05\\
GurobiTowerSepLp&2.60e-06& \bf6.64e-05& 4.05e-04& 6.14e-05\\
CPLEXSepLazy&1.43e-05& \bf2.04e-05& 2.77e-05& 2.52e-06\\
GurobiSepLazy&8.77e-06& \bf1.93e-05& 2.44e-05& 2.37e-06
\end{tabular}
}
\caption{Summary statistics of conic constraint violation for $n=100$ [s].}
\end{table}

\begin{table}[H]
\centering
\subtable[Robust.]{\begin{tabular}{lrrrrrrr}
Algorithm & min & avg & max & std
\\
\hline
CPLEXCP&0.00e+00& \bf1.73e-09& 8.96e-08& 9.49e-09\\
GurobiCP&0.00e+00& \bf2.69e-08& 7.83e-07& 1.07e-07\\
CPLEXLP&0.00e+00& \bf2.04e-09& 5.48e-08& 7.18e-09\\
GurobiLP&0.00e+00& \bf2.61e-05& 9.86e-05& 3.97e-05\\
LiftedLP&0.00e+00& \bf6.70e-09& 1.66e-07& 2.14e-08\\
CPLEXSepLp&0.00e+00& \bf2.19e-07& 7.79e-07& 1.74e-07\\
CPLEXTowerLp&0.00e+00& \bf2.92e-11& 1.14e-09& 1.61e-10\\
CPLEXTowerSepLp&0.00e+00& \bf4.47e-10& 1.28e-08& 2.09e-09\\
GurobiSepLp&0.00e+00& \bf3.40e-03& 4.28e-03& 9.77e-04\\
GurobiTowerLp&0.00e+00& \bf6.96e-05& 2.94e-04& 6.28e-05\\
GurobiTowerSepLp&0.00e+00& \bf7.14e-05& 2.58e-04& 6.26e-05\\
CPLEXSepLazy&2.34e-05& \bf3.84e-05& 4.78e-05& 3.89e-06\\
GurobiSepLazy&3.41e-08& \bf3.80e-05& 4.50e-05& 5.21e-06
\end{tabular}
}
\caption{Summary statistics of conic constraint violation for $n=200$ [s].}
\end{table}

\begin{table}[H]
\centering
\subtable[Robust.]{\begin{tabular}{lrrrrrrr}
Algorithm & min & avg & max & std
\\
\hline
LiftedLP&0.00e+00& \bf4.61e-09& 9.94e-08& 1.35e-08\\
CPLEXSepLp&0.00e+00& \bf2.97e-07& 9.07e-07& 2.48e-07\\
GurobiSepLp&0.00e+00& \bf4.09e-03& 6.35e-03& 2.39e-03\\
CPLEXSepLazy&4.47e-05& \bf5.79e-05& 6.91e-05& 4.48e-06\\
GurobiSepLazy&3.90e-05& \bf5.60e-05& 6.81e-05& 5.63e-06
\end{tabular}
}
\caption{Summary statistics of conic constraint violation for $n=300$ [s].}
\end{table}

\subsection{Additional Graphs}\label{appendixgraphs}

Figure~\ref{dynanmicliftedform}  compares the performance of the three dynamic lifted polyhedral relaxations in their nonlinear reformulation version (CPLEXSepLP, GurobiSepLP, CPLEXTowerLP, GurobiTowerLP, CPLEXTowerSepLP, GurobiTowerSepLP). We also include as a reference the NLP-based algorithms CPLEXCP and GurobiCP. 

\begin{figure}[htpb!]
\centering 
\subfigure[Classical-CPLEX.]{\includegraphics[scale=.85]{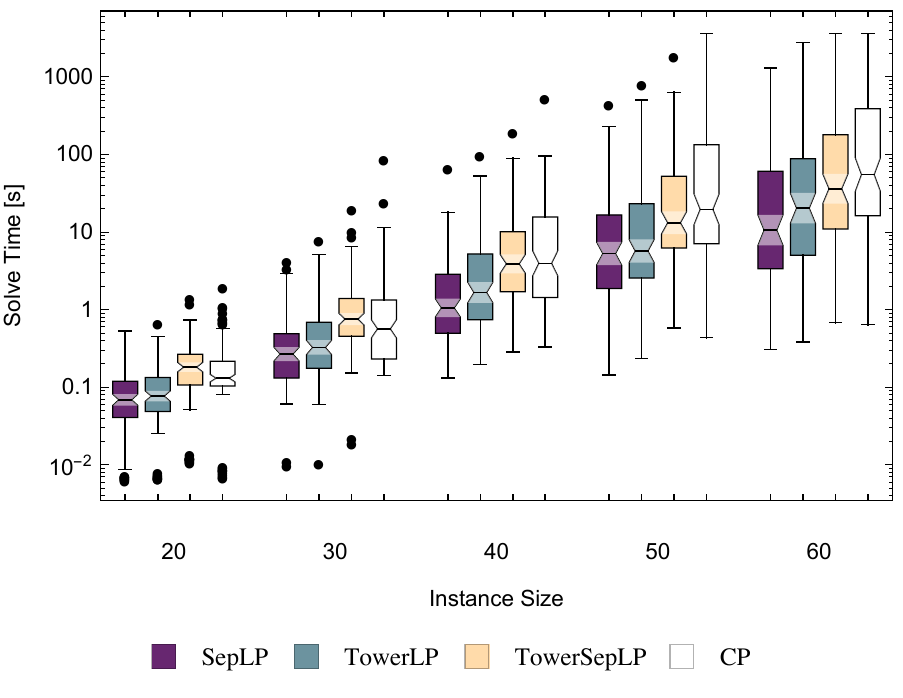}}
\subfigure[Classical-Gurobi.]{\includegraphics[scale=.85]{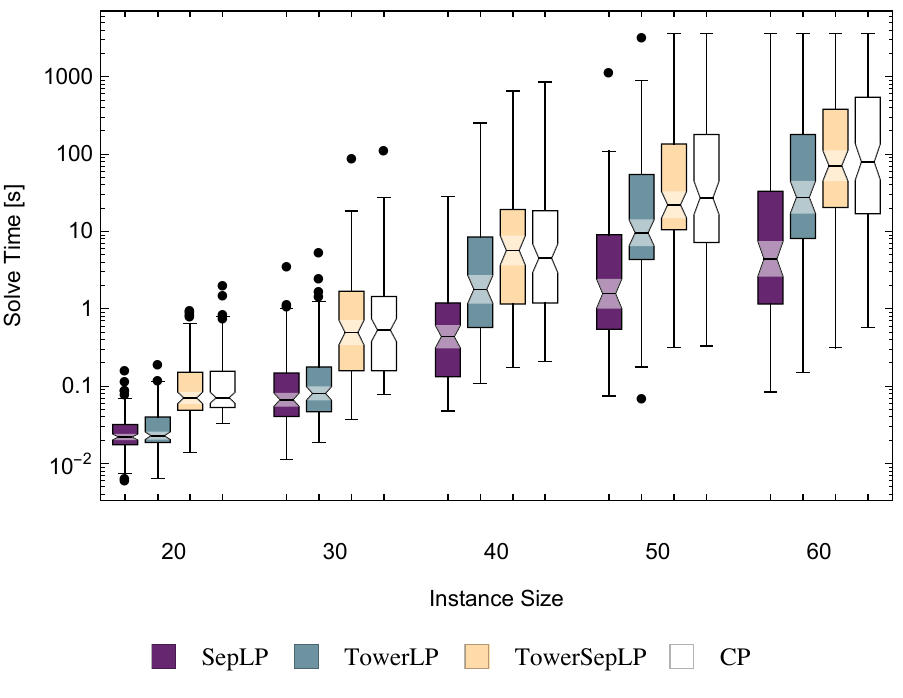}}
\subfigure[Shortfall-CPLEX.]{\includegraphics[scale=.85]{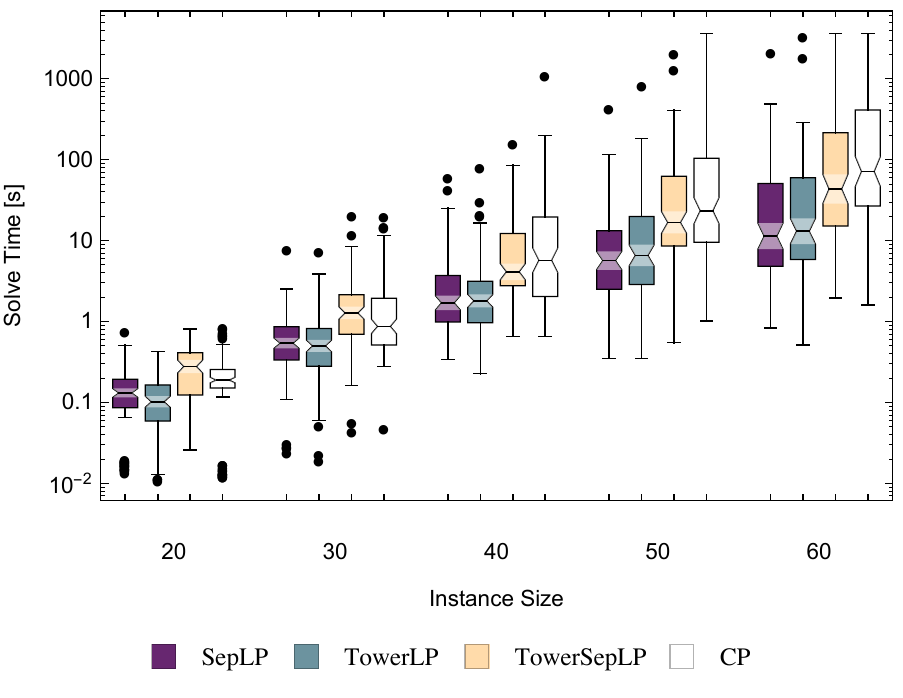}}
\subfigure[Shortfall-Gurobi.]{\includegraphics[scale=.85]{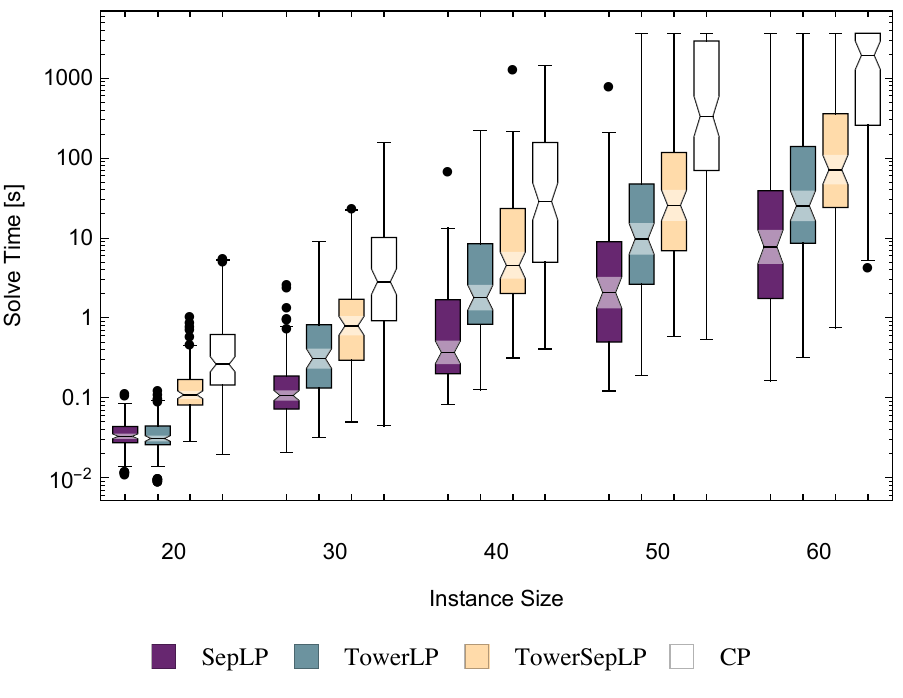}}
\subfigure[Robust-CPLEX.]{\includegraphics[scale=.85]{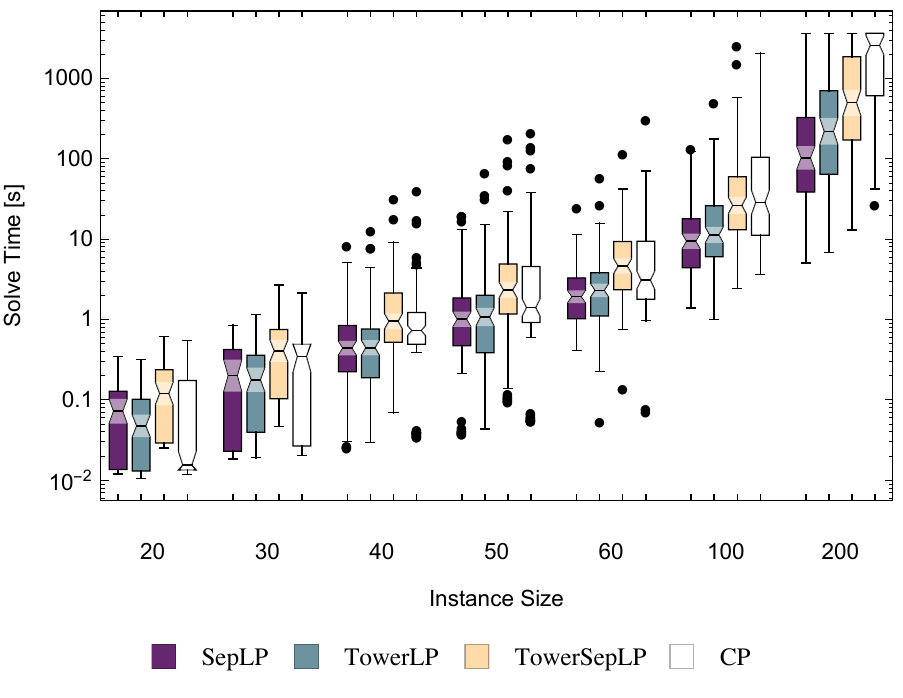}}
\subfigure[Robust-Gurobi.]{\includegraphics[scale=.85]{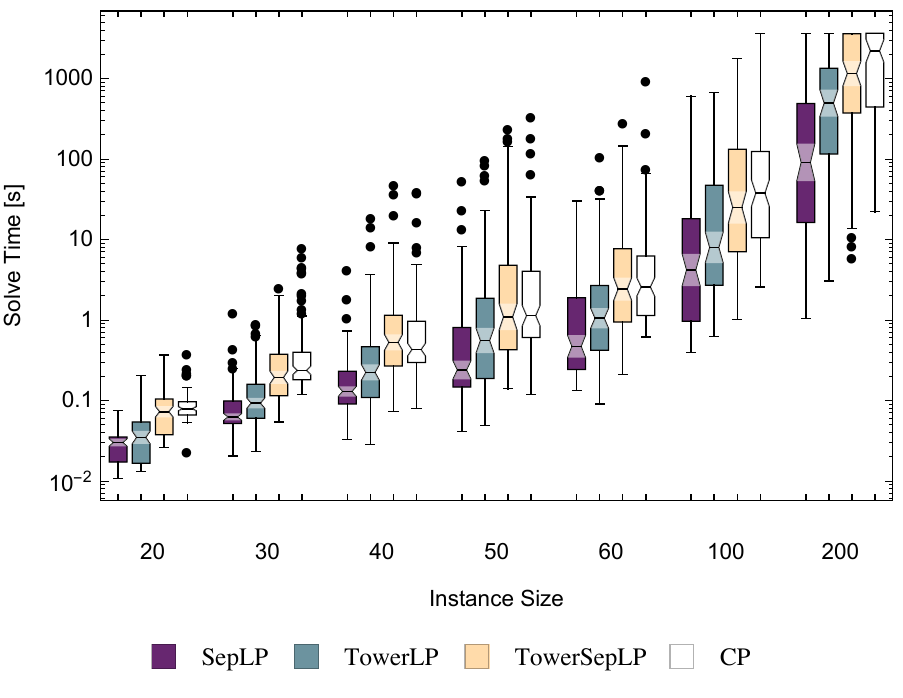}}
\caption{Solution times for dynamic lifted polyhedral relaxations solved by standard LP-based algorithms [s].}\label{dynanmicliftedform}
\end{figure}

LiftedLP
Our final set of charts compare the branch- and cut-based LiftedLP algorithms (LiftedLP, CPLEXSepLazy and GurobiSepLazy) with the separable dynamic lifted polyhedral relaxation in its nonlinear reformulation version (CPLEXSepLP and GurobiSepLP). To minimize the complexity of the graphics we divide the comparison into two parts.  In Figure~\ref{branchvsreform} we compare the branch-based algorithm with the separable reformulations and in Figure~\ref{cutvsreform} we compare the cut-based algorithms with the separable reformulations.
\begin{figure}[htb]
\centering 
\subfigure[Classical.]{\includegraphics[scale=.85]{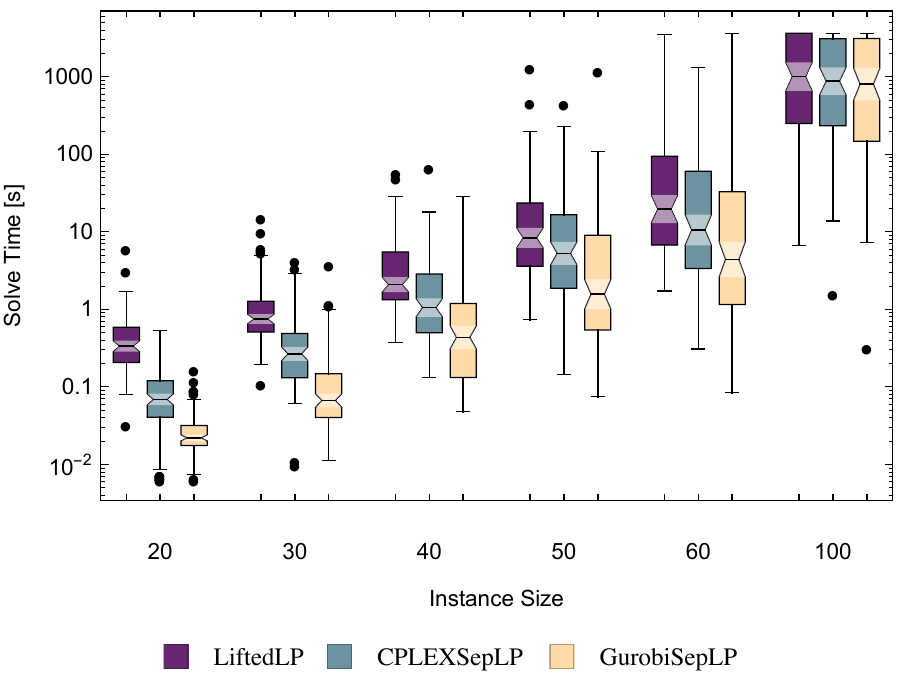}}
\subfigure[Shortfall.]{\includegraphics[scale=.85]{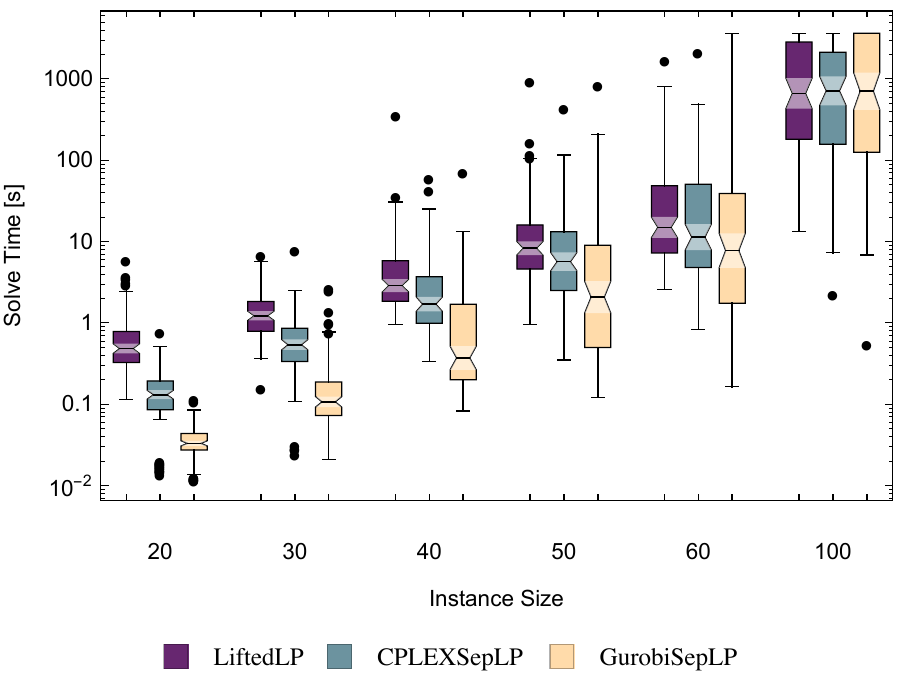}}
\subfigure[Robust.]{\includegraphics[scale=.85]{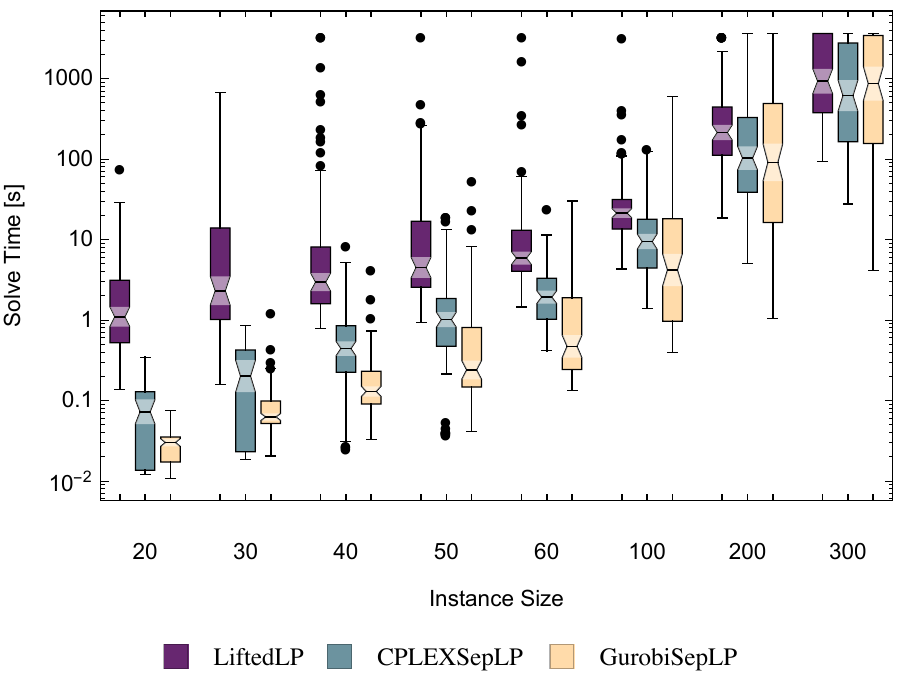}}
\caption{Solution times for the branch-based LiftedLP algorithm and best LP-based (separable re-formulation) algorithms [s].}\label{branchvsreform}
\end{figure}

\begin{figure}[htb]
\centering 
\subfigure[Classical.]{\includegraphics[scale=.85]{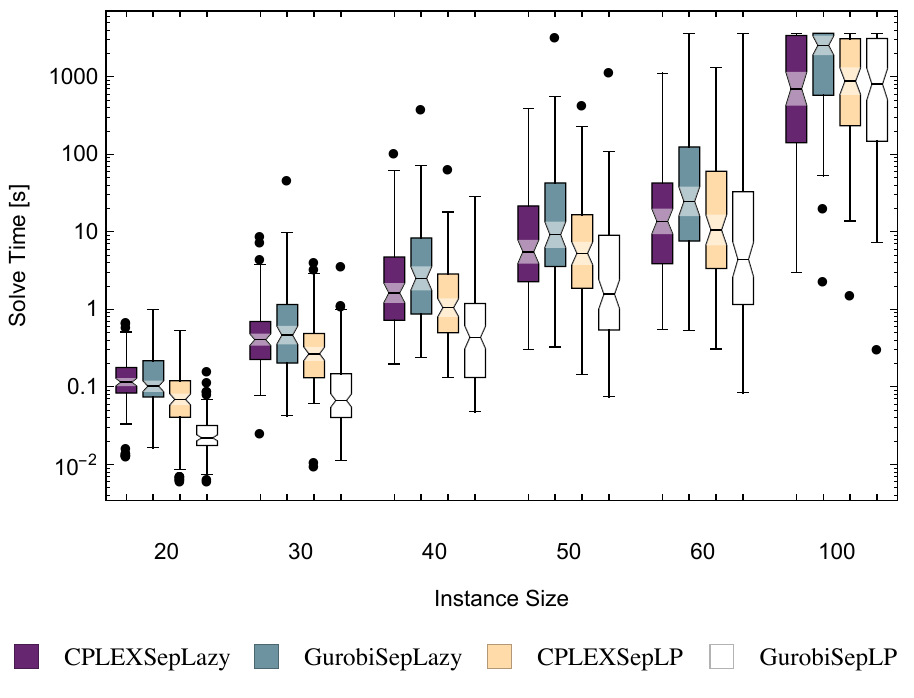}}
\subfigure[Shortfall.]{\includegraphics[scale=.85]{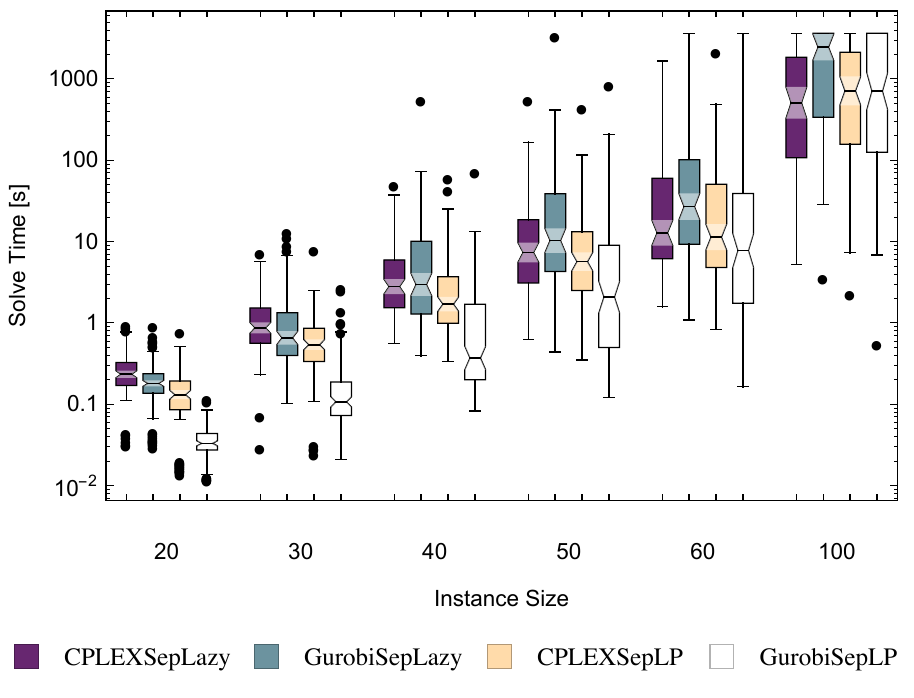}}
\subfigure[Robust.]{\includegraphics[scale=.85]{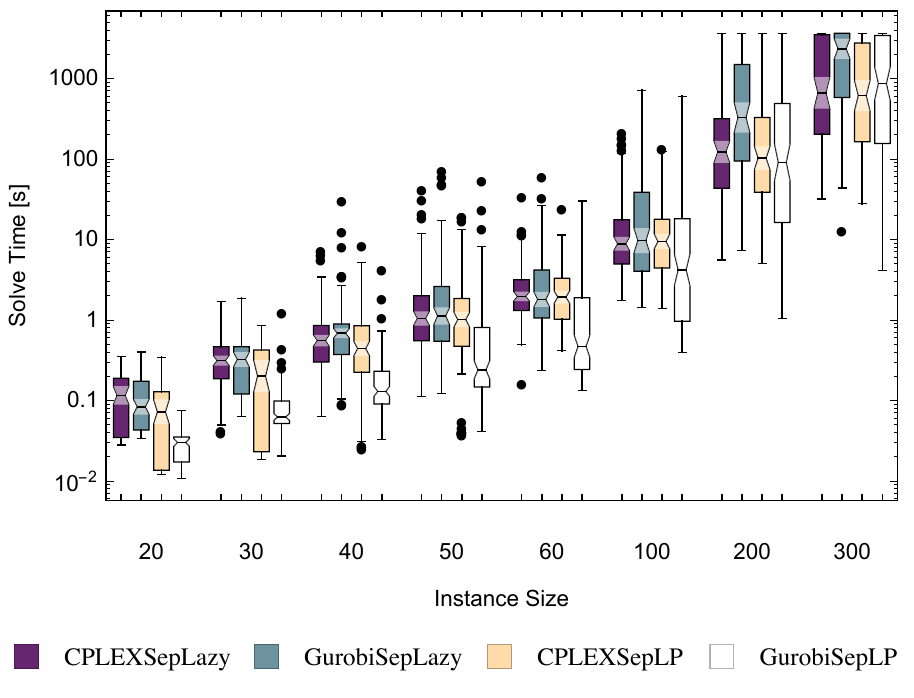}}
\caption{Solution times for the cut-based LiftedLP algorithms and best LP-based (separable re-formulation) algorithms [s].}\label{cutvsreform}
\end{figure}
From Figure~\ref{branchvsreform} we see that the separable reformulations provide an advantage over the branch-based LiftedLP algorithm for all instances. However, this advantage becomes smaller as the problems sizes increase and the LiftedLP algorithm can provide an advantage in some instances (e.g Table~\ref{summaryfortwohundred} shows the algorithms is the fastest in 10\% of the robust instances for $n=200$). Figure~\ref{cutvsreform} shows a similar behavior for the cut-based algorithms.

\end{document}